\numberwithin{equation}{section}
\newtheorem{thm}{Theorem}[section]
\newtheorem{prop}[thm]{Proposition}
\newtheorem{lem}[thm]{Lemma}
\newtheorem{conj}[thm]{Conjecture}
\newtheorem{dfn}[thm]{Definition}
\newcommand{\nn}{\nonumber}
\theoremstyle{definition}
\newcommand{\C}{{\mathbb C}}
\newcommand{\Z}{{\mathbb Z}}
\newcommand{\cA}{{\mathcal A}}
\newcommand{\E}{{\mathcal E}}
\newcommand{\cM}{\mathcal{M}}
\newcommand{\cR}{\mathcal{R}}
\newcommand{\gl}{\mathfrak{gl}}
\newcommand{\ssq}{\textsf{q}}
\newcommand{\id}{{\rm id}}
\newcommand{\Tr}{{\rm Tr}}
\newcommand{\g}{\mathfrak{g}}
\newcommand{\diag}{\mathop{\mathrm{diag}}}
\newcommand{\CP}{\mathbb{P}}
\newcommand{\Sp}{\mathrm{Sp}}
\newcommand{\Pole}{\mathrm{Pole}}
\newcommand{\pz}[1]{{p}_{#1}}
\newcommand{\ba}{\bar{a}}
\newcommand{\bb}{\bar{b}}
\newcommand{\bc}{\bar{c}}
\begin{document}

\begin{title}[$q$-difference opers]
{Remarks on $q$-difference opers 
\\
arising from quantum toroidal algebras}
\end{title}

\author{B. Feigin, M. Jimbo, and E. Mukhin}

\address{BF: 
National Research University Higher School of Economics,  101000, Myasnitskaya ul. 20, Moscow,  Russia, and Landau Institute for Theoretical Physics, 142432, pr. Akademika Semenova 1a, Chernogolovka, Russia;
Hebrew University of Jerusalem, Einstein Institute of Mathematics,
Givat Ram. Jerusalem, 9190401, Israel
}
\email{bfeigin@gmail.com}

\address{MJ: 
Professor Emeritus,
Rikkyo University, Toshima-ku, Tokyo 171-8501, Japan}
\email{jimbomm@rikkyo.ac.jp}

\address{EM: Department of Mathematics,
Indiana University Indianapolis,
402 N. Blackford St., LD 270, 
Indianapolis, IN 46202, USA}
\email{emukhin@iu.edu}


\begin{abstract} 
We propose a conjectural correspondence between 
the spectra of the Bethe algebra for the quantum toroidal $\mathfrak{gl}_2$
algebra on relaxed Verma modules, 
and $q$-hypergeometric opers with apparent singularities.
We introduce alongside 
the notion of apparent singularities for linear $q$-difference 
operators and discuss some of their properties.
We also touch on a generalization to $\mathfrak{gl}_n$.
\end{abstract}

\keywords{$q$-difference oper, apparent singularity, Bethe ansatz, 
quantum toroidal algebra}

\hfill  \today \bigskip

\maketitle

\section{Introduction}

Quantum toroidal algebras have large commutative subalgebras called 
the Bethe algebras, 
whose members are deformations of integrals of motion in conformal field theory.
The description of their spectra is an interesting and important problem.
Although it has been addressed for highest weight modules 
some time ago \cite{AO,FJMM1,FJMM2,FJM},  there remain many issues to be resolved. 
The present note is an attempt to treat an example of non-highest weight modules 
using the language of $q$-difference opers. 

Opers are certain gauge equivalence classes of linear systems of
differential operators associated with simple or affine Lie algebras. 
Here we are concerned only with type A where opers 
are uniquely represented by scalar differential operators, 
so we will use the terms ``oper'' and ``scalar operator'' interchangeably. 
The $q$-difference counterpart of opers originates in 
Baxter's TQ relations 
well known in integrable lattice models \cite{Ba}.
Let us recapitulate their main features in the simplest case of $\mathfrak{sl}_2$.

Fix $p\in\C^{\times}$.
The Bethe algebra of  $U_q\widehat{\mathfrak{sl}}_2$ 
is generated by transfer matrices, which are weighted traces of 
the universal R matrix $\mathcal{R}$,
\begin{align*}
T_V(x)=\mathrm{Tr}_{V}\Bigl(
\bigl(p^{-h/2}\otimes 1\bigr) \mathcal{R}(x)\Bigr)\,,
\qquad 
\mathcal{R}(x)=(s_x\otimes \id)\mathcal{R}\,,
\end{align*}
where $h$ counts the weights and $s_x$ is an automorphism 
$s_x(X)=x^{k}X$ for elements $X$ of homogeneous degree $k$. 
As an ``auxiliary space'' $V$ we can choose 
a representation of $U_q\widehat{\mathfrak{sl}}_2$, 
or that of its Borel subalgebra. 
The most basic choices are $V=\C^2$, the evaluation two dimensional
module, and $V=M^+$, the so-called prefundamental module of the 
Borel subalgebra. 
The elements $T(x)=T_{\C^2}(x)$ and $Q(x)=T_{M^+}(x)$
satisfy the relation \cite{BLZ,FH,FJMM3} 
\begin{align}
T(x)Q(x)=Q(q^2x)+Q(q^{-2}x)\,. \label{TQ}
\end{align}

Let $W_\lambda$ be the irreducible $U_q\mathfrak{sl}_2$ module with
highest weight $q^\lambda$, $\lambda\in\C$,
and let $W_{\lambda,x}$ be the evaluation module given
by the evaluation map 
$\mathrm{ev}_x:U_q\widehat{\mathfrak{sl}}_2\to U_q\mathfrak{sl}_2$, 
 $x\in \C^\times$.
Consider the tensor product
\begin{align}
W=W_{\lambda_1,x_1}\otimes\cdots\otimes W_{\lambda_N,x_N}\,.
\label{sl2-module}
\end{align}
The $T$ and $Q$ operators,
when appropriately normalized by scalar factors, $T_W(x)=f_W(x)T(x)$, $Q_W(x)=g_W(x)Q(x)$,  
act on each weight subspace of $W$ as polynomials in $x$. 
For generic $x_1,\ldots,x_N$ and $q,p$, their action
is diagonalizable with simple spectrum. 
The TQ relation \eqref{TQ} then allows us to associate with 
each eigenvector $w\in W$ 
a scalar $q$-difference equation with polynomial coefficients
\begin{align}
a_0(x)y(q^4x)-a_1(x)y(q^2x)+a_2(x)y(x)=0\,.
\label{TQdiff}
\end{align}
Here $y(x)$ is the eigenvalue of $Q_W(x)$ on $w$, 
$a_1(x)$ is that of $T_W(q^2x)$, 
and $a_0(x),a_2(x)$ are related to the
highest $\ell$-weight of $W$:
\begin{align*}
pq^{-2 l}\frac{a_2(q^{-2}x)}{a_0(q^{-2}x)}=
\prod_{\nu=1}^N
\frac{q^{\lambda_\nu}x-x_{\nu}} {x-q^{\lambda_\nu}x_{\nu}}\,,
\qquad l=\deg y(x)\,. 
\end{align*}
Let $L$ denote the $q$-difference oper on the left hand side of 
\eqref{TQdiff}.

In the traditional approach, we derive from
\eqref{TQdiff} the Bethe ansatz equations
satisfied by the roots  $t_1,\ldots,t_l$ of $y(x)$,
\begin{align*}
p\prod_{j=1}^l\frac{q^2t_i-t_j}{t_i-q^2t_j} 
=-\prod_{\nu=1}^N\frac{q^{\lambda_\nu}t_i-x_\nu}{t_i-q^{\lambda_\nu}x_\nu}\,,
\quad 1\le i\le l\,.
\end{align*}
Alternatively we can characterize the $q$-oper $L$
directly without making reference to the Bethe roots $\{t_j\}$.

First, $L$ has regular singularities at $x=0,\infty$, as well as 
at the zeroes of $a_0(x)$ and $a_2(x)$ which are given from 
the highest $\ell$-weights:
\begin{align}
a_0(q^{\lambda_\nu-2}x_\nu)=a_2(q^{-\lambda_\nu-2}x_\nu)=0\,,
\qquad 1\le \nu\le N\,.
\label{a0a2}
\end{align}
We will review the basics of linear $q$-difference equations with 
regular singularities in the next section. 

An additional  condition occurs   
if $\lambda_\nu$ is a non-negative integer.
In view of \eqref{a0a2}, we obtain from \eqref{TQdiff} 
a $(\lambda_\nu+1)\times (\lambda_\nu+1)$
linear system of equations for the vector
$^{t}(y(q^{\lambda_\nu}x_\nu),y(q^{\lambda_\nu-2}x_\nu),\ldots,
y(q^{-\lambda_\nu}x_\nu))$, 
and thus the determinant of the coefficient matrix vanishes:
$\Delta_{\lambda_\nu}(q^{\lambda_\nu-2}x_\nu)=0$,
where we set 
\begin{align}
\Delta_r(x) 
=\left|
\begin{matrix}
a_1(x)& a_2(x)&  & & \\
a_0(q^{-2}x)& a_1(q^{-2}x)&a_2(q^{-2}x)&&\\
&&&&\text{{\huge $0$}}\\
&\qquad\ddots&\qquad\ddots&\qquad\ddots& \\
&&&&\\
  &\text{\Huge{$0$}}   &\quad\ddots &a_1(q^{-2r+2}x) &a_2(q^{-2r+2}x)\\
    &      &            &    a_0(q^{-2r}x)&a_1(q^{-2r}x)\\
\end{matrix}
\right|\,.
\label{tridg}
\end{align}
In the next section we will explain 
that this condition 
is the $q$-analog of the monodromy-free condition for Fuchsian differential
equations.   
We say that the points $q^{\pm\lambda_\nu-2}x_\nu$ 
with $\lambda_\nu\in\Z_{\ge0}$
are apparent singularities of the $q$-oper $L$. 
We note in passing that $\Delta_r(x)$ is 
the eigenvalue of a suitably normalized 
transfer matrix corresponding to the auxiliary space $W_{r+1,q^{-r+1}x}$. 

Thus, as far as finite dimensional modules are concerned,
the Bethe ansatz method can be reinterpreted as a
bijective correspondence between the joint
eigenspaces of the Bethe algebra, and
$q$-difference opers with regular singularities all of 
which are apparent except $0,\infty$.
In the case of differential opers, 
an analogous statement
has been proved for Gaudin models associated with simple Lie algebras. 
For type A it is done 
in \cite{MTV2}  and the general case follows from \cite{Ry}, \cite{L}. For the XXX case and difference opers with additive shifts, a similar result is known for generic tensor product of evaluation vector representations of of $\gl_n$ Yangian, \cite{MTV3} and also for the homogeneous $\gl_2$ spin chain, see \cite{MTV1}.

The affinization of this correspondence has been studied intensively under
the name of ODE/IM correspondence starting with the work 
\cite{DT}, \cite{BLZ2}. 
See e.g. \cite{DDT} for a review, 
and \cite{FF,LVY} for a representation theory approach. 
Nevertheless, affine opers are a subject that is not fully 
understood  in the differential case,   
and much less so in the $q$-deformed case. 
In the present paper we make a modest step toward understanding the latter.

It appears that the notion of apparent singularities in the $q$-deformed
case is not discussed much in the literature
\footnote{While preparing  this manuscript, 
we learned about the paper \cite{BJ}
where a notion closely related to Definition \ref{def:apparent} 
is introduced and studied for additive difference equations.},
and one of our aims is to elucidate this point.
We then take up the Bethe ansatz equations for the
quantum toroidal $\mathfrak{gl}_2$ algebra in the evaluation
Verma modules, and reformulate them in terms of $q$-opers.
By extrapolation to the evaluation relaxed Verma modules \cite{JM},
we formulate a conjectural correspondence
between the joint spectra of the Bethe algebra and $q$-hypergeometric
opers with apparent singularities. 
Our study has been motivated partly by the remarkable paper 
\cite{BL} where perturbed (or $\lambda$-) hypergeometric opers 
play a central role.
To find the $q$-deformed counterpart of perturbed opers 
is an important problem left for the future work.

\medskip

This article is organized as follows.
In section \ref{sec:apparent}, we give a review of linear $q$-difference 
equations, and introduce the notion of apparent singularities. 
In section \ref{sec:toroidal}
we set up our notation regarding the quantum toroidal algebras.  
In section \ref{sec:Main} we state our conjecture. 
In section \ref{sec:higher} we examine the possible generalization to 
quantum toroidal $\mathfrak{gl}_n$ algebras.
The text is followed by three appendices. 
In section \ref{app:1} we show that, in the case of second order equations, 
the condition for apparent singularities 
reduces in the limit $q\to 1$ to the standard no-log condition for 
Fuchsian differential equations. 
In section \ref{subsec:connection}, we study the Birkhoff connection 
matrices for $q$-hypergeometric opers with apparent singularities.
Section \ref{sec:unfolded} is a remark concerning $\lambda$-opers.

\section{Apparent singularities of linear 
$q$-difference equations}\label{sec:apparent}

Here we summarize basic facts about linear difference equations
with regular singularities. 
General theory of difference equations 
in the additive and multiplicative ($q$-difference) cases are both  
discussed in the famous work \cite{B}. 
We deal here with the multiplicative case.
We fix $\ssq\in \C^{\times}$ with $|\ssq|<1$, and
let $\sigma_\ssq$ 
denote the $\ssq$-shift operator $(\sigma_\ssq f)(x)=f(\ssq x)$. 
We use the standard symbols
$\theta_\ssq(x)=(x,\ssq/x,\ssq;\ssq)_\infty$,
$(z_1,\ldots,z_r;\ssq)_\infty=\prod_{i=1}^r\prod_{k=0}^{\infty}(1-\ssq^kz_i)$.

\subsection{First order systems}\label{sec:first order}

Consider an $n\times n$ system of linear $q$-difference equations
with rational coefficients
\begin{align}
Y(\ssq x)=A(x)Y(x)\,,\quad
A(x)\in GL_n\bigl(\C(x)\bigr)\,,
\label{YAY}
\end{align}
or $(\sigma_{\ssq}-A)Y=0$ for short.
Using the freedom of changing $Y(x)$ to $f(x)Y(x)$ with a suitable 
scalar function $f(x)$,
we may assume that $A(x)$ is holomorphic at $x=0,\infty$.
We write $A_0=A(0)$, $A_\infty=A(\infty)$.

We make the following assumptions:
\begin{align}
&A_0,A_\infty\in GL_n(\C)\,,
\label{Fuchs}
 \\
&\text{$A_0$, $A_\infty$ are strongly non-resonant}.
\label{nonres}
\end{align}
Here an $n\times n$ matrix $A$ with eigenvalues
$\{\lambda_1,\ldots,\lambda_n\}$ is said to be   
{\it strongly non-resonant} if 
$\lambda_i\not\in q^{\Z}\lambda_j$ for $i\neq j$.
The system \eqref{YAY} is said to 
have {\it regular singularities}\footnote{in \cite{S} it is called Fuchsian}
at $x=0,\infty$ if it satisfies \eqref{Fuchs}.  

For a matrix valued meromorphic function $f(x)$ on $\C^{\times}$,
let $\Pole(f(x))$ denote the set of its poles.
Elements of $\Pole(A(x)^{\pm1})$ are called {\it intermediate singularities} 
of \eqref{YAY}.
We regard them also as regular singularities of \eqref{YAY}.

When $A(x)$ is a constant matrix $C\in GL_n(\C)$, we choose and fix 
a solution $e_C(x)$ of \eqref{YAY} as follows.
We need only the case when $C$ is semi-simple, 
see \cite{S} for the general case. Writing 
$C=G\Lambda G^{-1}$ with $\Lambda=\diag(c_1,\ldots,c_n)$, we set 
\begin{align}
&e_C(x)=G\diag(e_{c_1}(x),\ldots,e_{c_n}(x))G^{-1}\,,
\quad e_{c}(x)=\frac{\theta_\ssq(x)}{\theta_\ssq(c x)}\,.
\label{ec}
\end{align}
Then $e_C(x)$ is meromorphic on $\C^\times$, 
$\mathrm{Pole}\bigl(e_C(x)\bigr)=q^{\Z}\{c_1^{-1},\ldots,c_n^{-1}\}$, and 
$e_C(\ssq x)=Ce_C(x)$.

The system \eqref{YAY} admits 
unique solutions of the following form:
\begin{align*}
&Y_0(x)=\widehat{Y}_0(x)\cdot e_{A_0}(x)\,,
\quad 
\widehat{Y}_0(x)\in GL_n\bigl(\C\{x\}\bigr)\,,
\quad \widehat{Y}_0(0)=I\,,
\\
&Y_\infty(x)=\widehat{Y}_\infty(x)\cdot e_{A_\infty}(x)\,,
\quad 
\widehat{Y}_\infty(x)\in GL_n\bigl(\C\{x^{-1}\}\bigr)\,,
\quad \widehat{Y}_\infty(\infty)=I\,,
\end{align*}
where $\C\{x\}$ denotes the ring of convergent 
power series. 
Moreover the $q$-difference equation \eqref{YAY} allows us to continue 
$\widehat{Y}_0(x)$ (resp. $\widehat{Y}_\infty(x)$)
 meromorphically to
$\CP^1\backslash\{\infty\}$ (resp. $\CP^1\backslash\{0\}$). 
We call $Y_0(x),Y_\infty(x)$ the {\it canonical solutions} of \eqref{YAY}.

A $q$-analog of the notion of monodromy is 
Birkhoff's {\it connection matrix} \cite{B} defined by 
\begin{align*}
M(x)=Y_0(x)^{-1}Y_\infty(x)\,.
\end{align*}
Since $M(\ssq x)=M(x)$,
it is a meromorphic function on $\C^{\times}/\ssq^\Z$, 
that is, an elliptic function.
The matrix
$\hat{M}(x)=\hat{Y}_0(x)^{-1}\hat{Y}_\infty(x)
=e_{A_0}(x) M(x)e_{A_\infty}(x)^{-1}$
is called the central part of the connection matrix.

Set
\begin{align*}
S_\pm=\Pole(A(x)^{\pm1})\ \subset \C^\times\,.  
\end{align*}
By iterating the equation \eqref{YAY} it is easy to see that 
\begin{align*}
&\Pole(\widehat{Y}_0(x)^{\pm1})\subset  \ssq^{\Z_{\le0}}S_\mp\,,
\quad
\Pole(\widehat{Y}_\infty(x)^{\pm1})\subset  \ssq^{\Z_{>0}}S_\pm\,,
\\
&\Pole (\hat{M}(x)^{\pm 1})\subset q^\Z S_{\pm}\,. 
\end{align*}
The latter says that, apart from the
poles of $e_{A_0}(x)$ and $e_{A_\infty}(x)$,  
the singularities of the connection matrix are confined
to intermediate singularities mod $q^{\Z}$. In general the inclusion is strict. 

Let us say that a matrix valued function is {\it regular} at $x=s$ if it is
holomorphic and invertible there.
\begin{dfn}\label{def:apparent}
We say that $s\in S_+\cup S_-$ is an {\rm apparent singularity} 
of \eqref{YAY} if the product
\begin{align}
&A(\ssq^{N}x) A(\ssq^{N-1} x)
\cdots A(\ssq^{-N'}x)
\label{apparent}
\end{align}
is regular at $x=s$ for sufficiently large $N,N'\in\Z_{\ge0}$.
\qed
\end{dfn}
In \eqref{apparent}, 
it is enough to take $N,N'$ so that 
$\ssq^{j} s\not\in S_+\cup S_-$ for $j>N$ or $j< -N'$.

In other words, a singularity $x=s$ is apparent if and only if there exists a fundamental solution $Y(x)$ such that $sq^{\pm N}\not \in Pole(Y(x)^{\pm 1})$ for all sufficiently large $N$.

The following Lemma may be viewed as a justification of this definition. 
\begin{lem}
The  central part of the connection matrix $\hat{M}(x)$  
is regular at an apparent singularity. 
In particular, the connection matrix $M(x)$  is regular at an apparent singularity provided
$S_+\cup S_-$ is disjoint from 
$\mathrm{Pole}\bigl(e_{A_0}(x)\bigr)\cup 
\mathrm{Pole}\bigl(e_{A_\infty}(x)\bigr)$.
\end{lem}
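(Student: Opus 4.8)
The plan is to establish the statement for the central part $\hat{M}(x)$ first, and then obtain the claim about $M(x)$ from the factorization $M=e_{A_0}^{-1}\,\hat{M}\,e_{A_\infty}$. The starting point is a functional equation for $\hat{M}$. From $Y_0(\ssq x)=A(x)Y_0(x)$ together with $e_{A_0}(\ssq x)=A_0e_{A_0}(x)$ one gets $\widehat{Y}_0(\ssq x)=A(x)\widehat{Y}_0(x)A_0^{-1}$, and likewise $\widehat{Y}_\infty(\ssq x)=A(x)\widehat{Y}_\infty(x)A_\infty^{-1}$; combining these, the factor $A(x)$ cancels and
\[
\hat{M}(\ssq x)=A_0\,\hat{M}(x)\,A_\infty^{-1}.
\]
Since $A_0,A_\infty\in GL_n(\C)$ are constant, $\hat{M}$ is regular at a point $p$ if and only if it is regular at every point of $\ssq^{\Z}p$. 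It therefore suffices to produce a single point of the orbit $\ssq^{\Z}s$ at which $\hat{M}$ is regular; I will take $\ssq^{N+1}s$ with $N$ large.

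I would then use the pole estimates to locate where the two canonical solutions are automatically regular. Because $\Pole(\widehat{Y}_0^{\pm1})\subset\ssq^{\Z_{\le0}}S_\mp$ and $S_\pm$ are finite, the orbit $\ssq^{\Z}s$ meets $\Pole(\widehat{Y}_0^{\pm1})$ only at $\ssq^{k}s$ with $k$ bounded above; hence $\widehat{Y}_0$ is regular at $\ssq^{k}s$ for all sufficiently large $k$. Dually, $\Pole(\widehat{Y}_\infty^{\pm1})\subset\ssq^{\Z_{>0}}S_\pm$ forces $\widehat{Y}_\infty$ to be regular at $\ssq^{k}s$ for all sufficiently negative $k$. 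Thus the two solutions are manifestly regular at opposite ends of the orbit, and the task is to bridge them.

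The bridge is furnished by the apparent-singularity hypothesis, and this is the crux of the proof. Iterating $\widehat{Y}_\infty(\ssq x)=A(x)\widehat{Y}_\infty(x)A_\infty^{-1}$ yields, as an identity of meromorphic functions near $x=s$,
\[
\widehat{Y}_\infty(\ssq^{N+1}x)=\bigl(A(\ssq^{N}x)\,A(\ssq^{N-1}x)\cdots A(\ssq^{-N'}x)\bigr)\,\widehat{Y}_\infty(\ssq^{-N'}x)\,A_\infty^{-(N+N'+1)},
\]
whose middle factor is precisely the product \eqref{apparent}. Choosing $N,N'$ large enough that this product straddles every point of $(S_+\cup S_-)\cap\ssq^{\Z}s$, Definition \ref{def:apparent} makes it regular at $x=s$; since $\widehat{Y}_\infty(\ssq^{-N'}x)$ is regular at $x=s$ by the previous step and $A_\infty$ is a constant invertible matrix, it follows that $\widehat{Y}_\infty$ is regular at $\ssq^{N+1}s$. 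As $\widehat{Y}_0$ is also regular there, the product $\hat{M}=\widehat{Y}_0^{-1}\widehat{Y}_\infty$ is regular at $\ssq^{N+1}s$, and the functional equation above propagates this regularity back to $s$. I expect the only genuine work to be the bookkeeping that identifies the iterated transport operator with \eqref{apparent} and checks that a single choice of $N,N'$ serves both solutions.

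For the final clause, write $M=e_{A_0}^{-1}\,\hat{M}\,e_{A_\infty}$. The disjointness hypothesis guarantees that $e_{A_0}(x)$ and $e_{A_\infty}(x)$ are holomorphic at $s$, so $M$ inherits the regularity of $\hat{M}$; here one also uses that $s\notin\ssq^{\Z}$, so that $s$ avoids the zeroes of the $\theta$-factors and $e_{A_0}(x)^{-1}$ is holomorphic as well.
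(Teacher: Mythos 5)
Your proof of the main statement---regularity of the central part $\hat{M}(x)$---is correct and essentially identical to the paper's: the paper likewise iterates the difference equation to write $\hat{Y}_\infty(\ssq^{N+1}x)=A(\ssq^{N}x)\cdots A(\ssq^{-N'}x)\,\hat{Y}_\infty(\ssq^{-N'}x)\,A_\infty^{-N-N'-1}$, uses the pole estimates and Definition~\ref{def:apparent} to see that this and $\hat{Y}_0(\ssq^{N+1}x)$ are regular at $x=s$, and then pulls regularity back to $s$ through $\hat{M}(x)=A_0^{-N-1}\hat{M}(\ssq^{N+1}x)A_\infty^{N+1}$, which is exactly your functional equation iterated.

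The one point to correct is the last clause, where you invoke $s\notin\ssq^{\Z}$; this is not among the hypotheses of the lemma, so as written your argument does not cover the case $s\in\ssq^{\Z}$. The extra assumption is in fact unnecessary. Writing $A_0=G_0\diag(c_1,\ldots,c_n)G_0^{-1}$ and $A_\infty=G_\infty\diag(\kappa_1,\ldots,\kappa_n)G_\infty^{-1}$, one has
\begin{align*}
e_{A_0}(x)^{-1}=\frac{1}{\theta_\ssq(x)}\,G_0\diag\bigl(\theta_\ssq(c_1x),\ldots,\theta_\ssq(c_nx)\bigr)G_0^{-1},
\qquad
e_{A_\infty}(x)=\theta_\ssq(x)\,G_\infty\diag\Bigl(\frac{1}{\theta_\ssq(\kappa_1x)},\ldots,\frac{1}{\theta_\ssq(\kappa_nx)}\Bigr)G_\infty^{-1},
\end{align*}
so in $M=e_{A_0}^{-1}\hat{M}e_{A_\infty}$ the two factors $\theta_\ssq(x)^{\mp1}$ cancel identically: the pole of $e_{A_0}(x)^{-1}$ along $\ssq^{\Z}$ is exactly compensated by the zero of $e_{A_\infty}(x)$ there (and symmetrically for $M^{-1}$). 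Hence holomorphy and invertibility of $M$ at $s$ require only $\theta_\ssq(c_is)\neq0$ and $\theta_\ssq(\kappa_js)\neq0$ for all $i,j$, which is precisely the stated disjointness of $S_+\cup S_-$ from $\Pole\bigl(e_{A_0}(x)\bigr)\cup \Pole\bigl(e_{A_\infty}(x)\bigr)$. (The paper gives no argument at all for this clause, so your treatment is already more explicit; it just needs this cancellation in place of the extra hypothesis.)
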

\begin{proof}

By the assumption, 
\begin{align*}
&\hat{Y}_\infty(\ssq^{N+1} x)
=A(\ssq^{N}x)A(\ssq^{N-1}x)\cdots A(\ssq^{-N'}x)\hat{Y}_\infty(\ssq^{-N'}x)
A_\infty^{-N-N'-1}
\end{align*}
is regular at $x=s$ if $N,N'$ are chosen large enough. 
Also $\hat{Y}_0(\ssq^{N+1} x)$ is regular at $x=s$ if $N$ is large enough.
Therefore $\hat{M}(x)=A_0^{-N-1}\hat{M}(\ssq^{N+1} x)A_\infty^{N+1}
=A_0^{-N-1}\hat{Y}_0(\ssq^{N+1} x)^{-1}\hat{Y}_\infty(\ssq^{N+1} x)
A_\infty^{N+1}$ 
is regular 
at $s$.
\end{proof}
\medskip

\noindent{\it Example.}\quad 
Consider a scalar $q$-difference equation 
$Y(\ssq x)=\frac{\ssq^ax-1}{x-1} Y(x)$. 
Then it has singularities at $x=1$ and $x=\ssq^{-a}$. 
These singularities are apparent if $a\in\Z$ and not apparent otherwise. 
We have 
$$
Y_0(x)=\frac{(x;\ssq)_\infty}{(\ssq^a x;\ssq)_\infty}\,,
\quad
Y_\infty(x)=\frac{(\ssq^{1-a}/x;\ssq)_\infty}{(\ssq/x;\ssq)_\infty}
e_a(x)\,,\quad M(x)=1\,.
$$
If $a\in\Z$, then $Y_0(x)=Y_\infty(x)$
becomes the polynomial $\prod_{i=0}^{a-1}(q^ix-1)$ when $a\geq 0$ 
and 
the rational function $\prod_{i=a}^{-1}(q^ix-1)^{-1}$ when $a<0$.
\qed 

\medskip

\subsection{Scalar operators}\label{sec:scalar}

Consider now a scalar $q$-difference operator
with polynomial coefficients
\begin{align}
L=a_0(x)\sigma_{\ssq}^n-a_1(x)\sigma^{n-1}_{\ssq}+\cdots+(-1)^na_n(x)\,.
\label{scalar}
\end{align}
It can be cast into the form of a first order matrix operator 
$\sigma_{\ssq}-A$ 
by setting
\begin{align}
A(x)= 
\begin{pmatrix}
t_1(x) &-t_2(x) & & \cdots &  &(-1)^{n-1}t_n(x)\\
1 &0 &0& \cdots & 0 &0\\
0& 1& 0& \cdots & 0 &0  \\
0& 0& \ddots  &\ddots  &0  & 0 \\
0 &0  &0  &        &0&0\\
0&0 &0 &        &1& 0\\
\end{pmatrix}\,,
\quad t_k(x)=\frac{a_k(x)}{a_0(x)}\,.
\label{scalar-mat}
\end{align}
Note that $\det A(x)=t_n(x)$.
We assume that \eqref{scalar-mat} satisfies
the conditions \eqref{Fuchs}, \eqref{nonres}, 
and define apparent singularities of \eqref{scalar} to be those of 
the system \eqref{YAY} with \eqref{scalar-mat}. 
Hereafter we set 
\begin{align*}
A^{(m)}(x)=A(x)A(\ssq^{-1}x)\cdots A(\ssq^{-m}x)\,,\quad m\ge0\,.
\end{align*}

The following gives a criterion of a singularity to be apparent 
in the case of second order operators.  
\begin{prop}\label{prop:2nd}
Consider a second order $q$-oper
\begin{align}
L=a_0(x)\sigma_{\ssq}^2-a_1(x)\sigma_\ssq+a_2(x)\,.
\label{2nd ord}
\end{align}
Let $s\in\C^{\times}$, $r\in \Z_{>0}$.  
Assume that $a_0(x),a_2(\ssq^{-r}x)$ have a simple zero at $x=s$,
and that there are no other zeroes of the form $\ssq^js$, $j\in\Z$.  
Then the singularities $x=s,\ssq^{-r}s$ 
of \eqref{2nd ord} are apparent  if and only if 
$\Delta_r(s)=0$, where $\Delta_r(x)$ is the $(r+1)\times(r+1)$ tridiagonal determinant
\begin{align}
\Delta_r(x) 
=\left|
\begin{matrix}
a_1(x)& a_2(x)&  & & \\
a_0(\ssq^{-1}x)& a_1(\ssq^{-1}x)&a_2(\ssq^{-1}x)&&\\
&&&&\text{{\huge $0$}}\\
&\qquad\ddots&\qquad\ddots&\qquad\ddots& \\
&&&&\\
  &\text{\Huge{$0$}}   &\quad\ddots &a_1(\ssq^{-r+1}x) &a_2(\ssq^{-r+1}x)\\
    &      &            &    a_0(\ssq^{-r}x)&a_1(\ssq^{-r}x)\\
\end{matrix}
\right|\,.
\label{tridiag}
\end{align}
\end{prop}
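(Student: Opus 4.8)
The plan is to reduce the condition of Definition~\ref{def:apparent} to the regularity at $x=s$ of the single product $A^{(r)}(x)=A(x)A(\ssq^{-1}x)\cdots A(\ssq^{-r}x)$, and then to identify the obstruction to that regularity with the continuant $\Delta_r(s)$. To begin, I would localize on the $\ssq$-orbit of $s$. Here $A(x)=\left(\begin{smallmatrix} a_1/a_0 & -a_2/a_0\\ 1 & 0\end{smallmatrix}\right)$, so $S_+=\Pole(A)$ consists of the zeroes of $a_0$ and $S_-=\Pole(A^{-1})$ of the zeroes of $a_2$; by hypothesis the orbit $\ssq^{\Z}s$ meets $S_+\cup S_-$ only at $s$ (a simple pole of $A$) and at $\ssq^{-r}s$ (where $\det A=a_2/a_0$ has a simple zero). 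Taking $N\ge0$, $N'\ge r$ in \eqref{apparent}, every factor $A(\ssq^{j}x)$ with $j\ge1$ or $j\le-r-1$ is regular at $x=s$, so the product splits as $[A(\ssq^{N}x)\cdots A(\ssq x)]\,A^{(r)}(x)\,[A(\ssq^{-r-1}x)\cdots A(\ssq^{-N'}x)]$ with regular outer blocks. As regular matrices form a group, the product is regular at $s$ iff $A^{(r)}(x)$ is; this settles the apparentness of both $s$ and $\ssq^{-r}s$ at once.

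Next I would clear denominators. Set $\widetilde{A}(x)=a_0(x)A(x)=\left(\begin{smallmatrix} a_1(x) & -a_2(x)\\ a_0(x) & 0\end{smallmatrix}\right)$ and $\widetilde{A}^{(r)}(x)=\widetilde{A}(x)\widetilde{A}(\ssq^{-1}x)\cdots\widetilde{A}(\ssq^{-r}x)$, so that $A^{(r)}(x)=\widetilde{A}^{(r)}(x)/D(x)$ with $D(x)=\prod_{j=0}^{r}a_0(\ssq^{-j}x)$ having a simple zero at $s$. The point is that two entries of $\widetilde{A}^{(r)}(s)$ vanish automatically: the second row of the leftmost factor is $(a_0(s),0)=(0,0)$, forcing the whole second row of $\widetilde{A}^{(r)}(s)$ to vanish, while the second column of the rightmost factor is ${}^{t}(-a_2(\ssq^{-r}s),0)={}^{t}(0,0)$, forcing the whole second column to vanish. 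Hence $A^{(r)}$ is holomorphic at $s$ precisely when $[\widetilde{A}^{(r)}(s)]_{11}=0$. Invertibility is then free: $\det A^{(r)}(x)=\prod_{j=0}^{r}a_2(\ssq^{-j}x)/\prod_{j=0}^{r}a_0(\ssq^{-j}x)$ has matching simple zeroes of numerator and denominator at $s$, so its limit there is finite and nonzero. Thus $s,\ssq^{-r}s$ are apparent iff $[\widetilde{A}^{(r)}(s)]_{11}=0$.

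It remains to identify $[\widetilde{A}^{(r)}(x)]_{11}$ with $\Delta_r(x)$. Writing $m_r=[\widetilde{A}^{(r)}]_{11}$ and using $\widetilde{A}^{(r)}=\widetilde{A}^{(r-1)}\widetilde{A}(\ssq^{-r}x)$ together with $[\widetilde{A}^{(r-1)}]_{12}=-a_2(\ssq^{-r+1}x)\,m_{r-2}$, one reads off the three-term relation $m_r=a_1(\ssq^{-r}x)\,m_{r-1}-a_0(\ssq^{-r}x)a_2(\ssq^{-r+1}x)\,m_{r-2}$, with $m_0=a_1(x)$ and $m_1=a_1(x)a_1(\ssq^{-1}x)-a_2(x)a_0(\ssq^{-1}x)$. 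Expanding the tridiagonal determinant \eqref{tridiag} along its last row yields the identical continuant recursion with the same initial data, so $m_r=\Delta_r$ by induction and the proposition follows. I expect the only genuine subtlety to lie in the second step---checking that the two ``free'' vanishings really pin the obstruction to the single entry $(1,1)$, and that no separate hypothesis is needed for invertibility; once this is in place, the continuant identification is a routine induction and the reduction in the first step is purely formal.
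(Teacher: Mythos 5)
Your proposal is correct, and its skeleton matches the paper's proof: both arguments first reduce apparentness of $s$ and $\ssq^{-r}s$ to the holomorphy at $x=s$ of the single product $A^{(r)}(x)=A(x)A(\ssq^{-1}x)\cdots A(\ssq^{-r}x)$ (with invertibility coming for free from the computation of $\det A^{(r)}$, exactly as you note), and both then identify the obstruction with the tridiagonal determinant $\Delta_r$. Where you genuinely differ is the middle step. The paper proves, by induction on $m$, closed formulas for \emph{all four} entries of $A^{(m)}(x)$ in terms of continuants, e.g. $(A^{(m)})_{1,1}=\Delta_m(x)\big/\prod_{j=0}^{m}a_0(\ssq^{-j}x)$ and $(A^{(m)})_{1,2}=-a_2(\ssq^{-m}x)\Delta_{m-1}(x)\big/\prod_{j=0}^{m}a_0(\ssq^{-j}x)$, and then checks holomorphy of each entry at $x=s$ separately from the hypothesis on the zeroes. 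You instead clear denominators and observe that the second row of $\widetilde{A}^{(r)}(s)$ vanishes because the leftmost factor $\widetilde{A}(s)$ has zero second row, and the second column vanishes because the rightmost factor $\widetilde{A}(\ssq^{-r}s)$ has zero second column; this pins the obstruction to the $(1,1)$ entry with no computation, after which only the three-term continuant recursion $m_r=a_1(\ssq^{-r}x)m_{r-1}-a_0(\ssq^{-r}x)a_2(\ssq^{-r+1}x)m_{r-2}$ (matching the last-row expansion of \eqref{tridiag}) is needed. Your variant is leaner for the second order case and makes the role of the two hypotheses ($a_0(s)=0$, $a_2(\ssq^{-r}s)=0$) structurally transparent; the paper's explicit entry formulas carry more information and are the ones that generalize, via Lemma \ref{lem:AAA} and the minors $t^{(m)}_k$, to the $n$-th order criterion of Proposition \ref{prop:apparent-nth}, which is presumably why the authors organize the proof that way.
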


\begin{proof}
By the assumption, $A(x)$ is regular at $x=\ssq^js$ if $j\neq 0,-r$. 
Moreover 
\begin{align*}
\det A^{(r)}(x)=
\prod_{j=0}^r t_2(\ssq^{-j} x)=\frac{a_2(\ssq^{-r}x)}{a_0(x)}
\prod_{j=1}^r \frac{a_2(\ssq^{-j+1}x)}{a_0(\ssq^{-j}x)}
\end{align*}
is holomorphic and non-zero at $x=s$. 
Therefore $x=s,\ssq^{-r}s$ are apparent singularities if and only if
$A^{(r)}(x)$ is holomorphic at $x=s$.

By induction it is easy to see that the entries of $A^{(m)}(x)$ are given by
\begin{align*}
&(A^{(m)}(x))_{1,1}
=\frac{\Delta_m(x)}{a_0(x)}
\frac{1}{\prod_{j=1}^m a_0(\ssq^{-j}x)}\,,
\quad
(A^{(m)}(x))_{1,2}
=-\frac{a_2(\ssq^{-m}x)}{a_0(x)}
\frac{\Delta_{m-1}(x)}{\prod_{j=1}^m a_0(\ssq^{-j}x)}\,,
\\
&(A^{(m)}(x))_{2,1}
=
\frac{\Delta_{m-1}(\ssq^{-1}x)}{\prod_{j=1}^m a_0(\ssq^{-j}x)}\,,
\quad
(A^{(m)}(x))_{2,2}
=-\frac{a_2(\ssq^{-m}x)}{a_0(\ssq^{-m}x)}
\frac{\Delta_{m-2}(\ssq^{-1}x)}{\prod_{j=1}^{m-1}a_0(\ssq^{-j}x)}\,,
\end{align*}
where we set $\Delta_{-1}(x)=1$, $\Delta_{-2}(x)=0$.

From the assumption and the formulas above with $m=r$, 
it is clear that 
$(A^{(r)}(x))_{i,j}$ is holomorphic at $x=s $ for $(i,j)\neq(1,1)$, 
and that
$(A^{(r)}(x))_{1,1}$ is holomorphic at $x=s$ if and only if $\Delta_r(s)=0$.
\end{proof}

In appendix \ref{app:1} we will show that, in the limit $\ssq\to1$, 
the condition $\Delta_r(s)=0$ reduces to the 
no-log condition for the limiting differential equation.
This gives another justification of Definition \ref{def:apparent}.
\bigskip

Now we return to the $n$-th order equation \eqref{scalar}--\eqref{scalar-mat}. 

We extend the definition of $t_k(x)$ by $t_k(x)=0$ for $k<0$ or $k>n$. 
For $k,m\in\Z$, define $t^{(m)}_k(x)$ by setting $t^{(0)}_k(x)=t_k(x)$ and
\begin{align}
&t^{(m)}_k(x)
=
\left|
\begin{array}{ccccc|c}
t_1(x) & t_2(x) & \cdots &&t_m(x) & t_{k+m}(x) \\
  1  & t_1(\ssq^{-1}x)&\cdots&&t_{m-1}(\ssq^{-1}x)&t_{k+m-1}(\ssq^{-1}x)\\
    &\ddots      & \ddots      &&           \vdots     &\vdots\\
&&&&&\\
    &\text{\rm{\huge{0}}}  &    &1   &t_1(\ssq^{-m+1}x)&t_{k+1}(\ssq^{-m+1}x)\\
\hline
  0  &     \dots      &     &  0 & 1            &t_k(\ssq^{-m}x)\\
\end{array}
\right|\,,
\quad \text{if $m> 0$}\,,
\label{tNk}
\\
&t^{(m)}_k(x)=(-1)^{k-1}\delta_{k+m,0}
\quad \text{if $m< 0$}\,.
\nn
\end{align}

Note the recurrence relation
\begin{align*}
t^{(m)}_k(x)=t_1^{(m-1)}(x)t_k(\ssq^{-m}x)-t^{(m-1)}_{k+1}(x)\,.
\end{align*}
In particular, since $t^{(m)}_k(x)$=0 for $k>n$ we have 
\begin{align}
t^{(m)}_n(x)=t^{(m-1)}_1(x)t_n(\ssq^{-m}x)\,. 
\label{tmn}
\end{align}

\begin{lem}\label{lem:AAA}
For $m\ge0$ we have
\begin{align}
\bigl(A^{(m)}(x)\bigr)_{j,k}=(-1)^{k-1}t^{(-j+m+1)}_{k}(\ssq^{-j+1}x)\,.
\label{AAAt}
\end{align}
\end{lem}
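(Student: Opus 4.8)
The plan is to prove the matrix-entry formula \eqref{AAAt} by induction on $m$, using the companion form \eqref{scalar-mat} of $A(x)$ together with the recurrence $t^{(m)}_k(x)=t_1^{(m-1)}(x)t_k(\ssq^{-m}x)-t^{(m-1)}_{k+1}(x)$ that precedes the statement. The key observation is that the claimed formula involves $t^{(-j+m+1)}_k$, so the shift parameter $-j+m+1$ decreases by one as $j$ increases, and multiplication by the companion matrix $A$ exactly implements this bookkeeping.

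First I would establish the base case $m=0$. Here $A^{(0)}(x)=A(x)$, and I must check that $(A(x))_{j,k}=(-1)^{k-1}t^{(-j+1)}_k(\ssq^{-j+1}x)$. For $j=1$ the exponent is $-j+1=0$, giving $t^{(0)}_k(x)=t_k(x)$, so the first row reads $(A(x))_{1,k}=(-1)^{k-1}t_k(x)$, matching the top row $(t_1,-t_2,\ldots,(-1)^{n-1}t_n)$ of \eqref{scalar-mat}. For $j\ge2$ the exponent $-j+1<0$ is negative, so I invoke the definition $t^{(m)}_k(x)=(-1)^{k-1}\delta_{k+m,0}$ for $m<0$: this gives $(A(x))_{j,k}=(-1)^{k-1}(-1)^{k-1}\delta_{k-j+1,0}=\delta_{k,j-1}$, which is precisely the subdiagonal of $1$'s in the companion matrix. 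So the base case holds.

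Next I would carry out the inductive step. Assuming \eqref{AAAt} for $m-1$, I write $A^{(m)}(x)=A(x)\,A^{(m-1)}(\ssq^{-1}x)$ and compute the $(j,k)$ entry as $\sum_{i}(A(x))_{j,i}(A^{(m-1)}(\ssq^{-1}x))_{i,k}$. The structure of $A(x)$ makes this sum short: for $j\ge2$ only the entry $(A(x))_{j,j-1}=1$ contributes, so $(A^{(m)}(x))_{j,k}=(A^{(m-1)}(\ssq^{-1}x))_{j-1,k}$, and applying the inductive hypothesis (with argument $\ssq^{-1}x$ and row index $j-1$) yields $(-1)^{k-1}t^{(-j+m+1)}_k(\ssq^{-j+1}x)$, as desired. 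The genuinely substantive case is $j=1$, where the first row of $A(x)$ contributes all of $t_1,-t_2,\ldots,(-1)^{n-1}t_n$: here I expand $(A^{(m)}(x))_{1,k}=\sum_{i=1}^n(-1)^{i-1}t_i(x)(A^{(m-1)}(\ssq^{-1}x))_{i,k}$, substitute the inductive formula for each factor, and must collapse the resulting alternating sum into a single term $(-1)^{k-1}t^{(m)}_k(x)$.

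The main obstacle will be exactly this $j=1$ computation: I expect the alternating sum to telescope against the recurrence for $t^{(m)}_k$. Concretely, writing out the $i=1$ term separately produces $t_1(x)\,t^{(m-1)}_k(\ssq^{-1}x)$ after accounting for signs and shifts, while the remaining terms $i\ge2$ should reassemble into $-t^{(m-1)}_{k+1}(x)$ via the same inductive hypothesis applied at shifted indices; combining gives $t^{(m)}_k(x)$ by the stated recurrence $t^{(m)}_k=t^{(m-1)}_1(x)t_k(\ssq^{-m}x)-t^{(m-1)}_{k+1}(x)$. The delicate points are tracking the powers of $\ssq$ in the arguments and verifying that the signs $(-1)^{i-1}$ and $(-1)^{k-1}$ combine correctly across the sum. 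An alternative, perhaps cleaner, route is to recognize the determinant \eqref{tNk} as the minor obtained by Laplace-expanding the product $A(x)\cdots A(\ssq^{-m}x)$ along its first column at each stage, which directly encodes the companion-matrix action; but I would first attempt the direct inductive telescoping, since the recurrence relation has already been isolated precisely to make this step mechanical.
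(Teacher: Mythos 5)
Your overall strategy (induction on $m$, with the base case and the rows $j\ge2$ handled by the companion structure) is sound, and those parts of your argument are correct; the paper itself only says ``induction on $m$'' without details. However, the concrete mechanism you propose for the one substantive step, the row $j=1$, fails. With your decomposition $A^{(m)}(x)=A(x)A^{(m-1)}(\ssq^{-1}x)$, the inductive hypothesis gives
\begin{align*}
(A^{(m)}(x))_{1,k}=(-1)^{k-1}\sum_{i=1}^n(-1)^{i-1}t_i(x)\,t^{(m-i)}_k(\ssq^{-i}x)\,,
\end{align*}
and you claim the terms with $i\ge2$ reassemble into $-t^{(m-1)}_{k+1}(x)$, so that the stated recurrence $t^{(m)}_k(x)=t^{(m-1)}_1(x)t_k(\ssq^{-m}x)-t^{(m-1)}_{k+1}(x)$ finishes the computation. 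This is false for $m\ge2$. Take $n=3$, $m=2$, $k=1$: the terms with $i\ge2$ equal $-t_2(x)t_1(\ssq^{-2}x)+t_3(x)$, whereas $-t^{(1)}_{2}(x)=-t_1(x)t_2(\ssq^{-1}x)+t_3(x)$; these differ for generic coefficients. (The claim does hold at $m=1$, which is what makes the plan look plausible.) The source of the error is a mismatch of expansions: the paper's recurrence is the Laplace expansion of the determinant \eqref{tNk} along its \emph{last row}, which corresponds to peeling off the factor $A(\ssq^{-m}x)$ on the \emph{right} of the product $A^{(m)}$, while your left decomposition requires instead the expansion along the \emph{first column}, namely the identity $t^{(m)}_k(x)=\sum_{i\ge1}(-1)^{i-1}t_i(x)t^{(m-i)}_k(\ssq^{-i}x)$ (the convention for negative superscripts supplying the boundary term $(-1)^m t_{k+m}(x)$). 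That identity is true, but it is not the recurrence quoted in the paper and must be proved separately, e.g.\ from the block-triangular structure of the minors of \eqref{tNk} --- essentially the ``alternative route'' you mention but do not carry out.

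The cleanest repair keeps your induction but peels off the last factor: write $A^{(m)}(x)=A^{(m-1)}(x)A(\ssq^{-m}x)$. Since $A(\ssq^{-m}x)$ has first row $\bigl((-1)^{k-1}t_k(\ssq^{-m}x)\bigr)_k$ and subdiagonal $1$'s,
\begin{align*}
(A^{(m)}(x))_{j,k}
=(A^{(m-1)}(x))_{j,1}\,(-1)^{k-1}t_k(\ssq^{-m}x)+(A^{(m-1)}(x))_{j,k+1}\,,
\end{align*}
where the last term is read as $0$ when $k=n$. The inductive hypothesis turns the right-hand side into
$(-1)^{k-1}\bigl[t^{(-j+m)}_1(\ssq^{-j+1}x)\,t_k(\ssq^{-m}x)-t^{(-j+m)}_{k+1}(\ssq^{-j+1}x)\bigr]$,
and this equals $(-1)^{k-1}t^{(-j+m+1)}_k(\ssq^{-j+1}x)$ precisely by the stated recurrence applied with superscript $-j+m+1$ at the point $\ssq^{-j+1}x$ (note $\ssq^{-(-j+m+1)}\cdot\ssq^{-j+1}x=\ssq^{-m}x$). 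This works uniformly in $j$, with no case distinction; the only extra check is the trivial one that the recurrence also holds for superscripts $\le 0$ under the paper's conventions $t^{(0)}_k=t_k$ and $t^{(m')}_k=(-1)^{k-1}\delta_{k+m',0}$ for $m'<0$.
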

\begin{proof}
This can be shown by induction on $m$.
\end{proof}

\begin{prop}\label{prop:apparent-nth}
Consider an $n$-th order $q$-oper \eqref{scalar}, 
and let $s\in\C^\times, r,t\in\Z$, $0\le t<r$.
Assume that 
$a_0(x)$ has simple zeroes at $x=\ssq^{-j}s$, $0\le j\le t$, 
$a_n(x)$ has simple zeroes at $x=\ssq^{-j}s$, $r-t\le j\le r$, 
and that there are no other zeroes of the form $\ssq^js$, $j\in\Z$.  

Then the singularities $x=\ssq^{-j}s$ with $0\le j\le t$ or $r-t\le j\le r$  of  \eqref{scalar} are
apparent if and only if 
\begin{align}
&
t^{(-j+r+1)}_k(\ssq^{-j+1}x)\,,\quad 1\le j\le t+1\,,\ 1\le k\le n ,
\quad 
\text{
are holomorphic at $s$. }
\label{app-sc1}
\end{align}
\end{prop}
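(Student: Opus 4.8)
The plan is to reduce the apparentness of all the listed singularities to a single regularity statement for the matrix product $A^{(r)}(x)$ at $x=s$, and then to read off \eqref{app-sc1} from that statement using the companion structure of $A(x)$ together with Lemma \ref{lem:AAA}.

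First I would locate the singular factors. Since $A(x)$ is holomorphic away from the zeroes of $a_0$, and $\det A(x)=t_n(x)=a_n(x)/a_0(x)$, the only points of the orbit $\ssq^{\Z}s$ at which a factor $A(\ssq^{-i}x)$ fails to be regular are the $a_0$-zeroes $\ssq^{-j}s$ ($0\le j\le t$) and the $a_n$-zeroes $\ssq^{-j}s$ ($r-t\le j\le r$), all of whose exponents lie in $[0,r]$. Fixing a listed singularity $x=\ssq^{-j_0}s$ and setting $u=\ssq^{j_0}x$, the factors in \eqref{apparent} become $A(\ssq^{-i}u)$ with $u$-exponent $i$ running over an interval which, for $N,N'$ large, contains $[0,r]$, the factors with exponent outside $[0,r]$ being regular at $u=s$. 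Hence the product factors as $L(u)\,A^{(r)}(u)\,R(u)$ with $L,R$ regular at $u=s$, so \eqref{apparent} is regular at $x=\ssq^{-j_0}s$ if and only if $A^{(r)}(x)$ is regular at $x=s$, independently of $j_0$. Thus all the listed points are apparent simultaneously precisely when $A^{(r)}(x)$ is regular at $s$.

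Next I would discard the invertibility clause. A direct count gives $\det A^{(r)}(x)=\prod_{i=0}^{r}a_n(\ssq^{-i}x)/a_0(\ssq^{-i}x)$, and under the simple-zero hypotheses the numerator and the denominator each acquire a zero of order exactly $t+1$ at $x=s$; hence $\det A^{(r)}(s)\neq0$, so $A^{(r)}(x)$ is regular at $s$ if and only if it is holomorphic there. To see which entries actually impose conditions, I would use left multiplication by the companion matrix: from $A^{(r)}(x)=A(x)A^{(r-1)}(\ssq^{-1}x)$ one gets that row $j$ of $A^{(r)}(x)$ equals row $1$ of $A^{(r-j+1)}(\ssq^{-j+1}x)$ for $2\le j\le r+1$. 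For $j\ge t+2$ this product involves only factors $A(\ssq^{-e}s)$ with $e\in[j-1,r]\subseteq[t+1,r]$, none an $a_0$-zero, so it is automatically holomorphic at $s$ (and for $j>r+1$ the entries are constants by the $m<0$ clause of \eqref{tNk}). Therefore holomorphy of $A^{(r)}(x)$ at $s$ is equivalent to holomorphy of its first $t+1$ rows, which by \eqref{AAAt} is precisely \eqref{app-sc1}.

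The step I expect to be the main obstacle is the bookkeeping that couples the two blocks. One must control simultaneously the genuine poles of $A$ coming from $a_0$ and the poles of $A^{-1}$ coming from the zeroes of $\det A=t_n$, i.e. from $a_n$, and verify that the $a_n$-block imposes no condition beyond holomorphy of the same $A^{(r)}(x)$ that already governs the $a_0$-block. This is exactly what the determinant computation secures, and pinning down that the order of vanishing is \emph{exactly} $t+1$, so that the zero and pole cancel, is the delicate point, relying on the simple-zero and no-other-zero hypotheses. The companion-matrix reduction collapsing the $n$ rows to the first $t+1$ is the other place where the inequalities $0\le t<r$ and the index shifts in \eqref{AAAt} and \eqref{tNk} must be tracked with care.
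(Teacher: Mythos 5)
Your proof is correct and follows essentially the same route as the paper's: reduce apparentness of all the listed points to regularity of $A^{(r)}(x)$ at $x=s$, use the determinant formula $\det A^{(r)}(x)=\prod_{i=0}^{r}a_n(\ssq^{-i}x)/a_0(\ssq^{-i}x)$ to trade regularity for holomorphy, and observe that the rows with $j>t+1$ are automatically holomorphic, so that only the first $t+1$ rows, i.e.\ \eqref{app-sc1}, impose conditions. The only cosmetic difference is that you justify the automatic holomorphy of the lower rows via the companion-matrix row-shift identity, whereas the paper reads it off from the denominator $\prod_{i=j-1}^{r}a_0(\ssq^{-i}x)$ of the entries in \eqref{tNk}; these are equivalent observations.
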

\begin{proof}
By the assumption, $A(x)$ is regular at $x=\ssq^js$ if $j>0$ or $j<-r$.
We have 
\begin{align*}
\det A^{(r)}(x)=\prod_{j=0}^r\frac{a_n(\ssq^{-j}x)}{a_0(\ssq^{-j}x)} 
=\prod_{j=0}^t\frac{a_n(\ssq^{-j-r+t}x)}{a_0(\ssq^{-j}x)}
\prod_{j=t+1}^r\frac{a_n(\ssq^{j-r}x)}{a_0(\ssq^{-j}x)}
\end{align*}
is holomorphic and nonzero at $x=s$. Therefore the points in question
are apparent if and only if $A^{(r)}(x)$ is holomorphic at $x=s$.
According to \eqref{AAAt}, its $(j,k)$-th entry is 
$\pm t^{(-j+r+1)}_k(\ssq^{-j+1}x)$. 
The denominator of the latter $\prod_{i=j-1}^ra_0(\ssq^{-i}x)$ 
does not vanish at $x=s$ provided $j>t+1$. 
\end{proof}

The following special case will be used later.
\begin{prop}\label{prop:apparent-special}
Consider an $n$-th order $q$-oper \eqref{scalar} 
and let $s\in\C^\times$. 
Assume that 
$a_0(x)$ has simple zeroes at $x=\ssq^{-j}s$, $0\le j\le n-2$, 
$a_n(x)$ has simple zeroes at $x=\ssq^{-j}s$, $1\le j\le n-1$, 
and that there are no other zeroes of the form $\ssq^js$, $j\in\Z$.
Then the singularities $x=\ssq^{-j}s$, $0\le j\le n-1$, are apparent if the following conditions are satisfied.
\begin{enumerate}
\item For $1\le k\le n-1$, 
$t_k(x)$ is holomorphic at $x=\ssq^{-j}s$ with $j\neq n-k,n-k-1$. 
\item For $2\le k\le n-1$, $t_k(\ssq^{-n+1}s)=0$.
\item
For $1\le k\le n-1$ the determinant
$$
\left|\begin{matrix}
      t_k(\ssq^{-n+k+1}x)& t_{k+1}(\ssq^{-n+k+1}x)\\
      1 & t_1(\ssq^{-n+1}x)\\
     \end{matrix}
\right|
$$
is holomorphic at $x=s$.
\end{enumerate}
\end{prop}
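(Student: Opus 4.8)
The plan is to identify this as the case $r=n-1$, $t=n-2$ of Proposition \ref{prop:apparent-nth}: the hypotheses on the zeroes of $a_0$ and $a_n$ coincide, and as in that proof $\det A^{(n-1)}(x)=\prod_{j=0}^{n-1}a_n(\ssq^{-j}x)/a_0(\ssq^{-j}x)$ is holomorphic and nonvanishing at $x=s$. Thus the singularities are apparent once the product $A^{(n-1)}(x)=A(x)A(\ssq^{-1}x)\cdots A(\ssq^{-(n-1)}x)$ is holomorphic at $x=s$, and the whole task is to deduce this holomorphy from (i)--(iii). Writing $B_i=A(\ssq^{-i}x)$, I would first record that, by the zero pattern of $a_0$ together with (i), each $B_i$ with $0\le i\le n-2$ has a simple pole at $s$ with rank-one residue $e_1v_i^T$, where $v_i$ is supported on the two columns $\{n-i-1,n-i\}$ (the only $k$ for which $t_k$ is singular at $\ssq^{-i}s$), while $B_{n-1}$ is regular at $s$ since $a_0(\ssq^{-(n-1)}s)\neq0$. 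Condition (ii), together with $t_n(\ssq^{-n+1}s)=0$ (which holds because $a_n$ vanishes at $\ssq^{-(n-1)}s$), collapses the first row of $B_{n-1}(s)$ to $t_1(\ssq^{-n+1}s)\,e_1^T$; all lower rows of every companion matrix $B_i$ are the constant subdiagonal rows $e_{l-1}^T$, $l\ge2$.

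The core is an induction from the right. Put $Q_0=B_{n-1}$ and $Q_j=B_{n-1-j}Q_{j-1}$, so that $Q_{n-1}=A^{(n-1)}$. I would prove that each $Q_j$ is holomorphic at $s$ and that, for $l\ge j+1$, row $l$ of $Q_j(s)$ equals row $l-j$ of $B_{n-1}(s)$; the latter is immediate from the constancy of the subdiagonal rows. Assuming this for $Q_j$, the only pole of $Q_{j+1}=B_{n-2-j}Q_j$ at $s$ comes from the first-row pole of $B_{n-2-j}$, so its residue is $e_1\bigl(u^TQ_j(s)\bigr)$ with $u$ supported on columns $\{j+1,j+2\}$. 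By the propagation, rows $j+1,j+2$ of $Q_j(s)$ are rows $1,2$ of $B_{n-1}(s)$, namely $t_1(\ssq^{-n+1}s)e_1^T$ and $e_1^T$; hence the residue is $\bigl(u_{j+1}t_1(\ssq^{-n+1}s)+u_{j+2}\bigr)e_1e_1^T$. The scalar here is, up to the sign $(-1)^j$, exactly the residue at $s$ of the $2\times2$ determinant of (iii) with $k=j+1$, the shift $\ssq^{-n+k+1}=\ssq^{-(n-2-j)}$ matching the argument in $u$. Condition (iii) makes that determinant holomorphic, so the scalar vanishes, $Q_{j+1}$ is holomorphic, and its row propagation follows as before. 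The base case $j=0$ uses (iii) with $k=1$ and the last step $j=n-2$ uses $k=n-1$ (where $\ssq^{-n+k+1}=1$), so the range $1\le k\le n-1$ in (iii) is consumed exactly. Taking $j=n-1$ yields holomorphy of $A^{(n-1)}$ and hence apparency.

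The main difficulty is entirely organizational: arranging the three bookkeeping facts — the rank-one, two-column support of each residue from (i), the collapse of the first row of $B_{n-1}(s)$ to a multiple of $e_1^T$ from (ii), and the downward row-propagation through the companion structure — so that at every stage the accumulated residue is a scalar multiple of $e_1e_1^T$ whose coefficient is precisely the quantity controlled by (iii). Care is needed with the alternating signs $(-1)^{k-1}$ coming from the first row of $A$, and with checking that $t_1(\ssq^{-n+1}x)$ is regular at $s$ while the two relevant entries of $v_i$ and of the determinant in (iii) are genuinely singular there, so that the residue identity is the correct one. Once these are in place the induction closes with no further computation.
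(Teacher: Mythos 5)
Your proposal is correct, and its skeleton coincides with the paper's: both reduce the statement to Proposition \ref{prop:apparent-nth} with $r=n-1$, $t=n-2$ (so that apparency amounts to holomorphy of $A^{(n-1)}(x)$ at $x=s$, the determinant being automatically regular there), and both then run a right-to-left induction over the partial products $A(\ssq^{-i}x)\cdots A(\ssq^{-(n-1)}x)$. Where you differ is in the mechanism of the inductive step. The paper stays at the scalar level: it identifies the entries of $A^{(n-1)}$ with the functions $t^{(-j+n)}_k(\ssq^{-j+1}x)$ via Lemma \ref{lem:AAA} and invokes a multi-term recursion (stated without derivation) expanding each such entry as a sum $\sum_l(\pm)t_l(\ssq^{-j+1}x)\,t_k^{(n-j-l)}(\ssq^{-j-l+1}x)$ plus a $2\times2$ determinant; condition (i) keeps the coefficients $t_l$ regular, while (iii) (for $k=1$) and (ii) (for $k\ge2$) make the determinant term holomorphic, once per row $j$. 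You instead peel off one companion factor at a time and compute the pole by a rank-one residue argument: (i) confines the residue of each factor to two columns, the companion structure propagates rows downward so that precisely those two columns of the partial product at $x=s$ see rows $1,2$ of $A(\ssq^{-(n-1)}s)$, condition (ii) together with $t_n(\ssq^{-n+1}s)=0$ collapses row $1$ to $t_1(\ssq^{-n+1}s)e_1^T$, and (iii) with $k=j+1$ (the shift $\ssq^{-n+k+1}=\ssq^{-(n-2-j)}$ matching, as you verify) kills the resulting scalar. Your route is more self-contained and structurally transparent: it avoids Lemma \ref{lem:AAA} and the unproven recursion, and it makes visible why the hypotheses have exactly the shape (i)--(iii) (two singular columns hitting a collapsed row). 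The paper's route, in exchange, treats all entries and all rows uniformly through one formula, reusing determinant identities already set up for Proposition \ref{prop:apparent-nth}. One cosmetic slip on your side: the base case $Q_0=B_{n-1}$ needs no condition at all; it is the first inductive step $Q_0\to Q_1$ that consumes (iii) with $k=1$ --- but this does not affect the argument.
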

\begin{proof}
The assumption implies that $t_n(x)$ is holomorphic at $x=\ssq^{-j}s$ with 
$j\neq 0$, and $t_n(\ssq^{-n+1}s)=0$.

By Proposition \ref{prop:apparent-nth}, it suffices to show that 
$t_k^{(-j+n)}(\ssq^{-j+1}x)$ is holomorphic at $x=s$ for $1\le j\le n-1$,
$1\le k\le n$. We use induction on $j=n-1,\ldots,1$ based on 
the recursion relation
\begin{align*}
t_k^{(-j+n)}(\ssq^{-j+1}x)&=\sum_{l=1}^{n-j-1}(-1)^{l-1}
t_l(\ssq^{-j+1}x)t_k^{(n-j-l)}(\ssq^{-j-l+1}x)\\
&+(-1)^{n-j-1}
\left|\begin{matrix}
      t_{n-j}(\ssq^{-j+1}x)& t_{n-j+k}(\ssq^{-j+1}x)\\
      1 & t_k(\ssq^{-n+1}x)\\
     \end{matrix}
\right|\,.
\end{align*}
Note that the second term in the right hand side 
is holomorphic by (iii) if $k=1$, 
and by (ii) if $2\le k\le n$.

The assertion is true for $j=n-1$ where the first term 
is absent. 
Suppose the assertion is true for $j>j_0$. 
Since $t_l(\ssq^{-j_0+1}x)$ does not have a pole by (i),
the first term is holomprphic by the induction hypothesis.

This completes the proof.
\end{proof}

\section{Quantum algebras}\label{sec:toroidal}

In this section we explain our notation and terminology about
quantum affine and quantum toroidal algebras which will be used in the next section. 
We do not try to be self-contained here.
For the defining relations of the algebras and other details, 
the reader is referred to \cite{JM}.

Throughout this paper we fix $q,d\in\C^{\times}$ and set
$q_1=q^{-1}d$, $q_2=q^2$, $q_3=q^{-1}d^{-1}$ so that $q_1q_2q_3=1$. 
We assume that $q_1^aq_2^b=1$ with $a,b\in\Z$ implies $a=b=0$. 
We assume also $|q|<1$. 
Later we will use $\ssq=q_2$ as the $\ssq$-shift parameter.

\subsection{Quantum affine algebra $U_q\widehat{\mathfrak{gl}}_2$}\label{sec:quantum affine}

The quantum algebra $U_q\mathfrak{sl}_2$ 
has generators $e_1,f_1,K^{\pm1}$ with standard relations.
The Casimir element $\mathcal{C}=qK+q^{-1}K^{-1}+(q-q^{-1})^2f_1e_1$ is central.
The algebra  $U_q\mathfrak{gl}_2$ is defined by adjoining
an invertible split central element $\mathfrak{t}$ to  $U_q\mathfrak{sl}_2$. 

The quantum affine algebra $U_q\widehat{\mathfrak{sl}}_2$ 
is generated by the coefficients of the generating series 
\begin{align*}
x^\pm(z)=\sum_{k\in\Z}x^\pm_k z^{-k}\,, 
\quad
\phi^\pm(z)=K^{\pm1}\exp\bigl(\pm(q-q^{-1})\sum_{\pm r>0}h_r z^{-r}\bigr)\,,
\end{align*}
together with an invertible central element $C$. 
The defining relations are also standard and are given in Section 2.2 of \cite{JM}. 
We define $U_q\widehat{\mathfrak{gl}}_2$ by adjoining to  $U_q\widehat{\mathfrak{sl}}_2$
a split central element $\mathfrak{t}$, 
and an extra Heisenberg algebra with generators $Z_r$, $r\in\Z\backslash\{0\}$,
which commute with $U_q\widehat{\mathfrak{sl}}_2$. 
Algebra $U_q\mathfrak{gl}_2$ is embedded into $U_q\widehat{\mathfrak{gl}}_2$ by
$e_1\mapsto x^+_0$, $f_1\mapsto x^-_0$, $K\mapsto K$, 
$\mathfrak{t}\mapsto \mathfrak{t}$. 

Algebra $U_q\widehat{\mathfrak{gl}}_2$ has a $\Z^2$-grading with the assignment
\begin{align*} 
\deg x^\pm_k=(\pm1,k), \qquad \deg h_r=\deg Z_r=(0,r), 
\qquad \deg K=\deg \mathfrak{t}=\deg C=(0,0).
\end{align*}

Let $V$ be a $U_q\widehat{\mathfrak{gl}}_2$-module. 
We say that $V$ has level $\kappa\in\C^{\times}$ if $C$ acts on $V$ as a scalar $\kappa$.
We say $V$ is a highest weight module with highest weight $(a,b)\in (\C^\times)^2$ if it has
a cyclic vector $v\in V$ satisfying $x^+_{k}v=x^{-}_{r}v=h_rv=Z_rv=0$ for $k\ge0$, $r>0$,
and $Kv=b v$, $Cv=abv$, 
$\mathfrak{t}v=c v$ for some $c\in\C^{\times}$.

\subsection{Quantum toroidal $\mathfrak{gl}_2$ algebras}\label{sec:toroidal gl2}

We will use two presentations of the quantum toroidal $\mathfrak{gl}_2$ algebra, 
the ``perpendicular realization'' $\E^\perp$, and the ``parallel realization'' $\E$. 
These two presentations are related by the Miki automorphism. 
The realization $\E^\perp$ is used to construct the Bethe algebra, 
whereas the realization $\E$ is used to describe highest weight representations.
Another point about  $\E^\perp$ is that 
there exists a surjective homomorphism $ev_u$
(the evaluation map) from $\E^\perp$ to 
the quantum affine algebra $U_q\widehat{\mathfrak{gl}}_2$ at a special level.

The quantum toroidal $\mathfrak{gl}_2$ algebra in the perpendicular realization 
$\E^\perp$ 
 is generated by the coefficients of the generating series
\begin{align*}
&E^\perp_i(z)=\sum_{k\in \Z}E^\perp_{i,k}z^{-k}\,,
\quad 
F^\perp_i(z)=\sum_{k\in \Z}F^\perp_{i,k}z^{-k}\,,
\\
&K_i^{\pm,\perp}(z)=\bigl(K_i^{\perp}\bigr)^{\pm1}
\exp\bigl(\pm(q-q^{-1})
\sum_{r>0}H^\perp_{i,\pm r}z^{\mp r}\bigr)\,,
\end{align*}
where $i\in\Z/2\Z$, $K_0^\perp=K^{-1}$, $K_1^\perp=K$, 
an invertible central element $C$,
and an invertible split central element $\mathfrak{t}$.
The defining relations depend on parameters $q_1,q_2,q_3$ 
and are given in section 3.1 of \cite{JM}.

Algebra $\E^\perp$ is $\Z^3$-graded with the assignment
\begin{align*}
&\deg E^\perp_{i,k}=(1_i,k)\,,
\quad
\deg F^\perp_{i,k}=(-1_i,k)\,,
\quad \deg H^\perp_{i,r}=(0,0,r), \quad  \deg K=\deg \mathfrak{t}=\deg C=(0,0,0)\,,
\end{align*}
where $1_0=(1,0)$ and $1_1=(0,1)$. 

We have an embedding $v^\perp:\ U_q\widehat{\mathfrak{sl}}_2\to \E^\perp$ such that 
\begin{align*}
&x^+(z)\mapsto E_1^\perp(d^{-1}z)\,,\quad  
x^-(z)\mapsto F_1^\perp(d^{-1}z)\,,\quad  
\phi^\pm(z)\mapsto K^{\pm,\perp}_1(d^{-1}z)\,,
\quad C\mapsto C\, ,\quad \mathfrak{t}\to \mathfrak{t}.
\end{align*}
For $\kappa\in\C^{\times}$, 
denote by $\E_\kappa^\perp$ (resp. $U_{q,\kappa}\widehat{\gl}_2$)
the quotient of $\E^\perp$ (resp. $U_{q}\widehat{\gl}_2$) 
by the relation $C=\kappa$. When $\kappa=q_3$ there exists
the following evaluation map due to Miki, see Theorem 4.1 in \cite{FJM2}:
\begin{prop}
Let $u\in \C^{\times}$. There is a surjective homomorphism of algebras 
\begin{align}
{ev}_u:\E^\perp_{q_3}\longrightarrow  \widetilde{U}_{q,q_3}\widehat{\gl}_2
\label{eval}
\end{align}
such that $ev_u\circ v^\perp=\id$, 
and $\deg \bigl(ev_u(E^\perp_{0,k})\bigr)=(-1,k)$,
$\deg \bigl(ev_u(F^\perp_{0,k})\bigr)=(1,k)$. 
Here $\widetilde{U}_{q,\kappa}\widehat{\gl}_2$ denotes a completion
of $U_{q,\kappa}\widehat{\gl}_2$.
\end{prop}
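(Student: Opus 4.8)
The plan is to construct $ev_u$ explicitly on the Drinfeld-type generators of $\E^\perp$ and then verify that the defining relations of section 3.1 of \cite{JM} are preserved. The map is forced on the vertical subalgebra: the requirement $ev_u\circ v^\perp=\id$ dictates that $E_1^\perp(d^{-1}z), F_1^\perp(d^{-1}z), K_1^{\pm,\perp}(d^{-1}z)$ go to the Drinfeld currents $x^\pm(z), \phi^\pm(z)$ of $U_q\widehat{\mathfrak{sl}}_2\subset \widetilde{U}_{q,q_3}\widehat{\gl}_2$, while the central element $C$ goes to the level $q_3$ and the split central $\mathfrak{t}$ is preserved. All the content is therefore in specifying the images of the horizontal generators $E_0^\perp(z), F_0^\perp(z)$ and $H^\perp_{0,r}$. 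Guided by the grading requirement $\deg(ev_u(E^\perp_{0,k}))=(-1,k)$, $\deg(ev_u(F^\perp_{0,k}))=(1,k)$, I would take these to be vertex-operator-type expressions: $ev_u(E_0^\perp(z))$ a product of a $u$-dependent scalar, the negative current $x^-$ (to supply weight $-1$), and an exponential in the Heisenberg generators $h_r, Z_r$ chosen so that the homogeneity degree and the $K$-commutation relations come out correctly, and symmetrically for $F_0^\perp$. Because these images are infinite sums in the Heisenberg variables, they only make sense after passing to a completion, which is exactly why the target is $\widetilde{U}_{q,q_3}\widehat{\gl}_2$ rather than $U_{q,q_3}\widehat{\gl}_2$ itself.

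With the images fixed, the verification splits into blocks. Relations internal to the vertical $U_q\widehat{\mathfrak{sl}}_2$ hold automatically, since $v^\perp$ is an embedding and $ev_u$ restricts to the identity there. The Cartan--current ($K$--$E$, $K$--$F$) relations and the Heisenberg relations reduce to bracketing exponentials of $h_r, Z_r$ against $x^\pm$, and are checked by the standard normal-ordering rules in $U_q\widehat{\gl}_2$. The genuinely delicate relations are the current--current relations relating $E_0^\perp$ to $E_1^\perp$, $E_0^\perp$ to $F_0^\perp$, and $F_0^\perp$ to $F_1^\perp$, together with the $(q_1,q_2,q_3)$-Serre relations, which mix the two nodes; these require computing operator products of the vertex operators and showing that the resulting structure functions match the toroidal relations. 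Here the special level $C=q_3=q^{-1}d^{-1}$ is essential: it is precisely this choice that makes the zero--pole data of the operator products line up, so that the horizontal and vertical Serre relations close. A conceptually cleaner route, which I would pursue in parallel, is to invoke the Miki automorphism $\psi$ relating $\E$ and $\E^\perp$: in the parallel realization the evaluation to $\widetilde{U}_{q,q_3}\widehat{\gl}_2$ is more transparent because the parallel currents map directly to affine Drinfeld currents, and one then transports it through $\psi$ to obtain $ev_u$, reducing the relation check to the single statement that $\psi$ intertwines the two presentations.

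Finally, surjectivity follows once the relations are verified: the image already contains all of the vertical $U_q\widehat{\mathfrak{sl}}_2$ (hence $x^\pm, \phi^\pm, C$) and the split central $\mathfrak{t}$, and extracting the leading terms of the vertex-operator images of $E_0^\perp, F_0^\perp$ and of $H^\perp_{0,r}$ recovers the remaining Heisenberg generators $Z_r$ that, together with $U_q\widehat{\mathfrak{sl}}_2$, generate a dense subalgebra of $\widetilde{U}_{q,q_3}\widehat{\gl}_2$. The degree assertions are then bookkeeping: under $ev_u$ the two $\mathfrak{sl}_2$-weight components of the $\Z^3$-grading of $\E^\perp$ collapse onto the single $\mathfrak{sl}_2$-weight of $U_q\widehat{\gl}_2$ via $(\text{w}_0,\text{w}_1,k)\mapsto(\text{w}_1-\text{w}_0,k)$, with the node-$0$ generators contributing the sign opposite to the node-$1$ generators, which yields exactly $\deg(ev_u(E^\perp_{0,k}))=(-1,k)$ and $\deg(ev_u(F^\perp_{0,k}))=(1,k)$. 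I expect the main obstacle to be the current--current and Serre relation verifications in the completion, that is, controlling the vertex-operator operator products at the special level $q_3$; everything else is either forced by $ev_u\circ v^\perp=\id$ or routine normal ordering.
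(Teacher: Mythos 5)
The first thing to note is that the paper does not prove this proposition at all: it is stated as a quoted result, ``due to Miki,'' with the reference to Theorem 4.1 of \cite{FJM2}. So there is no internal proof to measure your attempt against; the honest comparison is with the cited construction, and your outline does follow its general shape --- images of the node-$0$ currents built from $x^{\mp}(z)$, exponentials of the Heisenberg generators $h_r$, $Z_r$, and $u$-dependent scalars, living in a completion, with the special level $C=q_3$ making the relations close, and with the Miki automorphism playing a structural role.

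However, as a proof your proposal has a genuine gap, and it is exactly the one you flag yourself. Everything you call forced or routine (the restriction $ev_u\circ v^\perp=\id$ on the vertical subalgebra, surjectivity once the images are known, the degree bookkeeping $(w_0,w_1,k)\mapsto(w_1-w_0,k)$) is indeed routine; the entire mathematical content of the statement is (a) the explicit formulas for $ev_u\bigl(E_0^\perp(z)\bigr)$, $ev_u\bigl(F_0^\perp(z)\bigr)$, $ev_u\bigl(K_0^{\pm,\perp}(z)\bigr)$, and (b) the verification that these satisfy the toroidal relations of section 3.1 of \cite{JM} --- the mixed current--current relations and the Serre relations --- precisely at level $q_3$. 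Your proposal never writes the formulas down (``chosen so that the homogeneity degree and the $K$-commutation relations come out correctly'') and defers the relation check (``I expect the main obstacle to be \dots''). But the existence of such a choice of vertex operators satisfying all the relations \emph{is} the theorem; asserting that a suitable choice exists assumes what must be proved. The alternative route through the Miki automorphism has the same problem in different clothing: the automorphism exchanges the two presentations of the toroidal algebra, but it does not by itself produce a map to the quantum affine algebra --- one still needs the level-$q_3$ vertex-operator construction on one side or the other. So what you have is a correct plan of attack, consistent with the literature the paper cites, but not a proof.
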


The quantum toroidal $\mathfrak{gl}_2$ algebra in the parallel realization $\E$ 
is generated by the coefficients of  the generating series
\begin{align*}
&E_i(z)=\sum_{k\in\Z}E_{i,k}z^{-k}\,,\quad 
F_i(z)=\sum_{k\in\Z}F_{i,k}z^{-k}\,,\quad \\
&K^{\pm}_i(z)=K_i^{\pm1}\exp\bigl(
\pm(q-q^{-1})\sum_{\pm r>0}H_{i,r}z^{-r}\bigr)\,,
\end{align*}
with $i\in\Z/2\Z$, 
together with an invertible split central element $\mathfrak{t}$
(notice the absence of $C$). 
The defining relations are in section 3.3 of \cite{JM}. We have 
a homomorphism $h:U_q\widehat{\mathfrak{sl}}_2\longrightarrow \E$ such that
\begin{align*}
 x^+(z)\mapsto E_1(d^{-1}z)\,,\quad
 x^-(z)\mapsto F_1(d^{-1}z)\,,\quad
\phi^{\pm}(z)\mapsto K_1^{\pm}(d^{-1}z)\,\quad
C\mapsto 1\,.
\end{align*}
The image $U_q^h\widehat{\mathfrak{sl}}_2= 
h\bigl(U_q\widehat{\mathfrak{sl}}_2\bigr)$ 
is isomorphic to 
$U_{q,1}\widehat{\mathfrak{sl}}_2$ with $C=1$.

Algebras $\E^\perp$ and $\E$ are in fact isomorphic, see Proposition 3.2
in \cite{JM} for the precise statement.

Henceforth we will consider only such representations on which 
$\mathfrak{t}$ ats as $1$ without further mention. 
\bigskip

Let $\Psi(z)=(\Psi_0(z),\Psi_1(z))$ be a pair of rational functions
which are regular at $z=0,\infty$ and satisfy  $\Psi_i(0)\Psi_i(\infty)=1$. 
An $\E$-module $W$ is called a highest weight module 
with highest $\ell$-weight $\Psi(z)$
if it has a cyclic vector $w\in W$ satisfying 
 \begin{align*}
E_i(z)w=0, \qquad
K^{\pm}_i(z)w=\Psi_i(z)w\,,\qquad i=0,1\,.
\end{align*}
Let $\hat{L}_{a,b}$ be a highest weight 
$U_q\widehat{\mathfrak{gl}}_2$-module with highest weight
$(a,b)$, and let $q_3=ab$ be its level. 
Then the $\E^\perp$-module obtained via the evaluation map 
has the highest $\ell$-weight
\begin{align*}
\Psi(z)=\big(\frac{az-u}{z-au},\  \frac{bz-u}{z-bu}\big).  
\end{align*}
The parameter $u$ depends on the choice of the evaluation map and 
can be made arbitrary by twisting by the automorphism
$X(z)\to X(z/a)$ for $X=E_i,F_i,K^\pm_i$ with some $a\in\C^{\times}$.

\subsection{Bethe algebra}\label{sec:Bethe}
Algebra $\E^\perp$ has a topological Hopf algebra structure 
given by the Drinfeld coproduct. 
Moreover it is a (quotient of a) quantum double,
and hence is endowed with a universal $\cR$ matrix, see \cite{N}. 
Fix $(\pz{0},\pz{1})\in(\C^{\times})^2$ and set $p=\pz{0}\pz{1}$. 
We assume $|p|<1$. The transfer matrices
\begin{align*}
T_V(x)=\Tr_{V}\Bigl((p^{-D} \pz{1}^{-\bar{\Lambda}_1}\otimes\id)
\cR(x)\Bigr)\,
\end{align*}
mutually commute for various choices of highest weight representations
$V$ of $\E^\perp$.
Here for a vector $v\in V$ of degree $(l,d)$ we set 
$p^{-D}p_1^{-\bar\Lambda_1}v=p^{-d}p_1^{-l}v$. 
We call the commutative subalgebra generated by $T_V$'s 
the Bethe algebra of $\E^\perp$, and denote it by
$\cA(\pz{0},\pz{1})$.

Let
$W=\bigoplus_{l,d\in\Z}W_{l,d}$ 
be a graded representation of $\E^\perp$
such that $W_{l,d}=0$ for $d>0$ and $\dim W_{l,d}<\infty$. 
The Bethe algebra acts on each graded component $W_{l,d}$. 
Moreover the restriction of $\cA(\pz{0},\pz{1})$
to the top degree subspaces $W_{l,0}$ is independent of $\pz{0}$, 
and it coincides with the action of the standard Bethe algebra of 
$U_q\widehat{\gl}_2$, see \cite{FT}.  

\section{Bethe algebra and $q$-hypergeometric opers}\label{sec:Main}

A $q$-hypergeometric oper is a second order $q$-oper with linear coefficients
\begin{align*}
\mathcal{L}=(a' x+a'')\sigma_\ssq^2-(b'x+b'')\sigma_\ssq+(c'x+c'')\,,
\end{align*}
where $a',a'',b',b'',c',c''\in \C^\times$.
By dividing by a scalar, rescaling the variable $x\to k x$
and conjugation $\mathcal{L}\to x^{-\lambda}\mathcal{L}x^\lambda$,  
it can be brought to the standard form
\begin{align*}
\mathcal{L}_{HG}=
x(a \sigma_\ssq-1)(b\sigma_{\ssq}-1)
-(c\ssq^{-1}\sigma_\ssq-1)(\sigma_{\ssq}-1)\,,
\end{align*}
which has Heine's series as a solution,
\begin{align*}
{}_2\phi_1(a,b;c;\ssq,x)=\sum_{n=0}^\infty\frac{(a;\ssq)_n(b;\ssq)_n}
{(c;\ssq)_n(\ssq;\ssq)_n} x^n\,,
\qquad (a;\ssq)_n=\prod_{j=0}^{n-1}(1-a\ssq^j)\,.
\end{align*}

In this section we consider a class of evaluation 
$\E^\perp$-modules and discuss their relation to $q$-hypergeometric opers.
To be specific, we consider Verma modules and relaxed Verma modules 
studied in \cite{JM}.
They are irreducible, tame $\E^\perp$-modules equipped with 
combinatorial bases and explicit action of the generators on them.  
These properties will not be used in the following.

\subsection{Evaluation Verma modules}\label{sec:ev_Verma}

Fix $\mu\in\C\backslash\Z$. 
Consider the Verma module
over $U_q\widehat{\mathfrak{gl}}_2$ 
with highest weight $(q^\mu,q_3q^{-\mu})$.
Since the level is $q_3$, it can be viewed as 
a highest weight $\E^\perp$-module via the evaluation map $ev_u$.
We call this $\E^\perp$-module 
the evaluation Verma module and denote it by $G_1^\mu$.
With an appropriate choice of the evaluation parameter $u$, 
$G_1^\mu$ has  highest $\ell$-weight 
\begin{align*}
\Psi^\mu(z)=(\Psi^\mu_0(z),\Psi^\mu_1(z))
=\Bigl(
q^{\mu}\frac{z-q_3^{-1}q_2^{-\mu}}{z-q_3^{-1}},q_3q^{-\mu}\frac{z-q_3^{-2}}{z-q_2^{-\mu}}
\Bigr)\,.
\end{align*}
The module $G_1^\mu$ has a $\Z^2$-grading inherited from the grading of $U_q\widehat{\mathfrak{gl}}_2$,
$G_1^\mu=\bigoplus_{l,d\in\Z}(G_1^\mu)_{l,d}$ such that 
$(G_1^\mu)_{l,d}=0$ for $d>0$, or $d=0,l>0$, with the character
\begin{align*}
\sum_{l,d\in\Z}\dim (G_1^\mu)_{l,d}\, w^l 
t^{-d}
=\frac{1}{\prod_{i=1}^\infty (1-t^{i})^2 
(1-w t^{i})(1-w^{-1} t^{i-1})
}\,.
\end{align*}
Figure  \ref{ev Verma} below shows the dimensions $\dim (G_1^\mu)_{l,d}$ 
in boldface letters.
\vskip 1cm

\begin{figure}[ht]
\centering
\begin{tikzpicture}
\draw[dashed] (-8,0.5) -- (5.5,0.5);
\node at (6,0.5) {$\to l$};
\draw[dashed] (-7.3,1) -- (-7.3,-3.5);
\node at (-7.1,1.5) {$\uparrow d$};
\coordinate (a0) at (-6,0);
\coordinate (a1) at (-4.5,0);
\coordinate (a2) at (-3,0);
\coordinate (a3) at (-1.5,0);
\coordinate (a4) at (0,0);
\coordinate (a5) at (1.5,0);
\coordinate (a6) at (3,0);
\coordinate (a7) at (4.5,0);

\node[above] at ($(a0)+(0,0.6)$) {\tiny $-4$}; 
\node[above] at ($(a1)+(0,0.6)$){\tiny $-3$}; 
\node[above] at ($(a2)+(0,0.6)$) {\tiny $-2$};
\node[above] at ($(a3)+(0,0.6)$) {\tiny $-1$};
\node[above] at  ($(a4)+(0,0.6)$) {\tiny $0$};
\node[above] at ($(a5)+(0,0.6)$) {\tiny $1$};
\node[above] at ($(a6)+(0,0.6)$) {\tiny $2$};
\node[above] at  ($(a7)+(0,0.6)$) {\tiny $3$};

\node at (-7.7,0) {\tiny $0$};
\draw (-7.4,0) -- (-7.2,0);
\node at (-7.7,-0.7) {\tiny $-1$};
\draw (-7.4,-0.7) -- (-7.2,-0.7);
\node at (-7.7,-1.4) {\tiny $-2$};
\draw (-7.4,-1.4) -- (-7.2,-1.4);
\node at (-7.7,-2.1) {\tiny $-3$};
\draw (-7.4,-2.1) -- (-7.2,-2.1);
\node at (-7.7,-2.8) {\tiny $-4$};
\draw (-7.4,-2.8) -- (-7.2,-2.8);

\draw ($(a0)+(0,0.4)$) -- ($(a0)+(0,0.6)$);
\draw ($(a1)+(0,0.4)$) -- ($(a1)+(0,0.6)$);
\draw ($(a2)+(0,0.4)$) -- ($(a2)+(0,0.6)$);
\draw ($(a3)+(0,0.4)$) -- ($(a3)+(0,0.6)$);
\draw ($(a4)+(0,0.4)$) -- ($(a4)+(0,0.6)$);
\draw ($(a5)+(0,0.4)$) -- ($(a5)+(0,0.6)$);
\draw ($(a6)+(0,0.4)$) -- ($(a6)+(0,0.6)$);
\draw ($(a7)+(0,0.4)$) -- ($(a7)+(0,0.6)$);

\node at (a0) {$\dots$};
\node at ($(a0)-(0,2.7)$) {$\vdots$};

\node at (a1) {\tiny${\bf 1}$};
\node at ($(a1)-(0,0.7)$) {\tiny${\bf 4}$};
\node at ($(a1)-(0,1.4)$) {\tiny${\bf 14}$};
\node at ($(a1)-(0,2.1)$) {\tiny${\bf 40}$};
\node at ($(a1)-(0,2.7)$) {$\vdots$};

\node at (a2) {\tiny${\bf 1}$};
\node at ($(a2)-(0,0.7)$) {\tiny${\bf 4}$};
\node at ($(a2)-(0,1.4)$) {\tiny${\bf 14}$};
\node at ($(a2)-(0,2.1)$) {\tiny${\bf 39}$};
\node at ($(a2)-(0,2.7)$) {$\vdots$};

\node at (a3) {\tiny${\bf 1}$};
\node at ($(a3)-(0,0.7)$) {\tiny${\bf 4}$};
\node at ($(a3)-(0,1.4)$) {\tiny${\bf 13}$};
\node at ($(a3)-(0,2.1)$) {\tiny${\bf 36}$};
\node at ($(a3)-(0,2.7)$) {$\vdots$};

\node at (a4) {\tiny${\bf 1}$};
\node at ($(a4)-(0,0.7)$) {\tiny${\bf 3}$};
\node at ($(a4)-(0,1.4)$) {\tiny${\bf 10}$};
\node at ($(a4)-(0,2.1)$) {\tiny${\bf 27}$};
\node at ($(a4)-(0,2.7)$) {$\vdots$};

\node at ($(a5)-(0,0.7)$) {\tiny${\bf 1}$};
\node at ($(a5)-(0,1.4)$) {\tiny${\bf 4}$};
\node at ($(a5)-(0,2.1)$) {\tiny${\bf 13}$};
\node at ($(a5)-(0,2.7)$) {$\vdots$};

\node at ($(a6)-(0,1.4)$) {\tiny${\bf 1}$};
\node at ($(a6)-(0,2.1)$) {\tiny${\bf 4}$};
\node at ($(a6)-(0,2.7)$) {$\vdots$};

\node at ($(a7)-(0,2.1)$) {\tiny${\bf 1}$};
\node at ($(a7)-(0,2.7)$) {$\vdots$};

\node at (-6,-1) {$\dots$};
\node at (-6,-1.5) {$\dots$}; 
\node at (-6,-2) {$\dots$};

\end{tikzpicture}
\caption{Grading in the evaluation Verma module $G_1^\mu$}
\label{ev Verma}
\end{figure}

\vskip1cm

The Bethe algebra $\mathcal{A}(p_0,p_1)$ act on each graded component $(G_1^\mu)_{l,d}$.
We expect the following to be true.

\begin{conj}\label{conj:2}
Assume that all parameters $q,q_3,\mu,\pz{0},\pz{1}$ are generic.
Then for each joint eigenvector of 
$\cA(\pz{0},\pz{1})$ on $G_1^\mu$
of degree $(l,d)=(l_0-l_1,-l_0)$, 
there exist polynomials
$y_0(x)=\prod_{i=1}^{l_0}(x-s_i)$, 
$y_1(x)=\prod_{i=1}^{l_1}(x-t_i)$
with non-zero distinct roots, 
satisfying the Bethe ansatz equations
\begin{align}
& \pz{0}q_2^{l_1-l_0}
\frac{y_0(q_2s_i)}{y_0(q_2^{-1}s_i)}
\frac{y_1(q_1s_i)y_1(q_3s_i)}{y_1(q_1^{-1}s_i)y_1(q_3^{-1}s_i)}
 =-\Psi^\mu_0(s_i)\,,\quad 1\le i\le l_0\,,
\label{BE0}\\
& \pz{1}q_2^{l_0-l_1}
\frac{y_1(q_2t_i)}{y_1(q_2^{-1}t_i)}
\frac{y_0(q_1t_i)y_0(q_3t_i)}{y_0(q_1^{-1}t_i)y_0(q_3^{-1}t_i)}
=-\Psi^\mu_1(t_i)\,, \quad 1\le i\le l_1
\,.
\label{BE1}
\end{align}
Furthermore, none of the factors appearing in \eqref{BE0}, \eqref{BE1} vanish.
\end{conj}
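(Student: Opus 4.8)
The plan is to realize the two polynomial families $y_0,y_1$ as eigenvalues of a commuting pair of Baxter $Q$-operators and to extract the Bethe ansatz equations from the functional ($TQ$) relations these operators satisfy. For each node $i\in\{0,1\}$ of the toroidal $\mathfrak{gl}_2$ diagram one has a prefundamental module of the Borel subalgebra of $\E^\perp$, and the associated transfer matrix $Q_i(x)$ commutes with the Bethe algebra $\cA(\pz0,\pz1)$ and with $Q_{1-i}(x)$. On a joint eigenvector $w$ each $Q_i(x)$ acts by a scalar function $y_i(x)$; the goal is to show that, after a normalization fixed by the highest $\ell$-weight $\Psi^\mu(z)$, these eigenvalues are the required polynomials and that their roots solve \eqref{BE0}--\eqref{BE1}.

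First I would establish polynomiality. After multiplying $Q_i(x)$ by an explicit scalar function determined by $\Psi^\mu(z)$, one expects the normalized operator to act on each finite-dimensional component $(G_1^\mu)_{l,d}$ as a polynomial in $x$. Here the grading displayed in Figure \ref{ev Verma} does the bookkeeping: the weight $(l,d)=(l_0-l_1,-l_0)$ pins down the degrees $\deg y_0=l_0$ and $\deg y_1=l_1$, so that the total number of Bethe roots of each color is fixed in advance. Proving this directly from the toroidal action is where the explicit combinatorial description of $G_1^\mu$ in \cite{JM} would be needed.

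Next I would derive the operator $TQ$-relations. These follow from short exact sequences relating tensor products of prefundamental and evaluation $\E^\perp$-modules, and on the eigenvector $w$ they take the schematic form
\begin{align*}
\tau_0(x)\,y_0(x)=\phi_0^+(x)\,y_0(q_2 x)+\phi_0^-(x)\,y_0(q_2^{-1}x)\,,
\end{align*}
where $\tau_0(x)$ is the Laurent-polynomial eigenvalue of the color-$0$ transfer matrix, and the coefficients $\phi_0^\pm(x)$ are built from $\Psi^\mu_0(x)$, the constant $\pz0$, and the opposite-color polynomial $y_1$ evaluated at the shifted points $q_1^{\pm1}x,q_3^{\pm1}x$. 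Evaluating this identity at a zero $x=s_i$ of $y_0$ kills the left-hand side and forces $\phi_0^+(s_i)\,y_0(q_2 s_i)+\phi_0^-(s_i)\,y_0(q_2^{-1}s_i)=0$, which after collecting the cross terms is exactly \eqref{BE0}; the color-$1$ relation gives \eqref{BE1} in the same way.

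The remaining, and to my mind hardest, step is to guarantee that every joint eigenvector is reached by this construction and that the roots are admissible. Genericity of $q,q_3,\mu,\pz0,\pz1$ enters twice. One needs the Bethe algebra to act on $(G_1^\mu)_{l,d}$ with simple joint spectrum and to be diagonalizable, so that each eigenvector corresponds to a single pair $(y_0,y_1)$; the natural route is a flatness/deformation argument degenerating $G_1^\mu$ to a tensor product of evaluation modules, where completeness is already understood, together with upper semicontinuity in the parameters. One also needs the roots $s_i,t_j$ to be nonzero and pairwise distinct and none of the factors in \eqref{BE0}--\eqref{BE1} to vanish; each such coincidence is a nontrivial algebraic condition on the parameters and is therefore avoided on a dense open set. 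The main obstacle is precisely this completeness/simplicity step: because $G_1^\mu$ is an infinite-dimensional Verma module rather than a tensor product of finite-dimensional evaluations, the standard machinery (Bethe vectors spanning the space, or the oper bijection of \cite{MTV2}) does not apply verbatim, and one must control the degeneration while keeping every graded piece finite-dimensional.
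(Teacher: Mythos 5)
The statement you are addressing is Conjecture \ref{conj:2} of the paper, and the paper contains no proof of it: the authors state explicitly that they ``make no attempt at proving'' it, and instead point to the quantum toroidal $\gl_1$ proofs in \cite{FJMM1}, \cite{FJMM2}, which they expect to extend to $\gl_2$ (cf.\ \cite{FJM}, Conjecture 5.4). So there is nothing in the paper to compare your argument against; your proposal has to stand on its own as a proof, and it does not. What you have written is a plausible roadmap --- essentially the Q-operator/TQ-relation route that the authors themselves have in mind --- but each of its pillars is an unestablished assertion rather than a lemma that can be invoked.

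Concretely, three gaps remain. First, the existence of prefundamental modules over a Borel subalgebra of $\E^\perp$, the commutativity of the resulting $Q_i(x)$ with $\cA(\pz{0},\pz{1})$, and the short exact sequences producing your schematic relation $\tau_0(x)\,y_0(x)=\phi_0^+(x)\,y_0(q_2x)+\phi_0^-(x)\,y_0(q_2^{-1}x)$ are not available in the toroidal $\gl_2$ setting; the category-$\cO$ machinery used for quantum affine algebras (as in \cite{FH}) and adapted to toroidal $\gl_1$ in \cite{FJMM1}, \cite{FJMM2} would have to be rebuilt here, and that reconstruction is essentially the content of the conjecture, not a preliminary to it. Second, polynomiality of the normalized eigenvalues on $G_1^\mu$, and regularity of $\tau_0(x)$ at the roots $s_i$ --- without which evaluating the TQ relation at $x=s_i$ does not kill the left-hand side --- are asserted rather than proved; the known arguments concern finite-dimensional modules, whereas $G_1^\mu$ is an infinite-dimensional Verma module (only its graded pieces are finite-dimensional). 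Third, the genericity step requires exhibiting at least one specialization of the parameters at which the roots are distinct, nonzero, and no factor of \eqref{BE0}--\eqref{BE1} vanishes; without such a point, ``a nontrivial algebraic condition is avoided on a dense open set'' is unjustified, since the condition could a priori hold identically. You flag the third issue yourself, but the first two are equally serious. In short, your proposal correctly identifies the expected strategy, but every step requiring real work is deferred, so it is a research program rather than a proof --- which, to be fair, is precisely the status the statement has in the paper.
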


For tensor products of Fock modules 
this type of result has been stated in \cite{AO}, 
though the full details are not clear to us.
We note also that, 
in the case of the quantum toroidal $\mathfrak{gl}_1$ algebras, 
Bethe ansatz equations have been proved in \cite{FJMM1}, \cite{FJMM2} in two different ways. We expect the same proofs work for quantum toroidal $\mathfrak{gl}_2$, see \cite{FJM}, Conjecture 5.4.
In this paper we will make no attempt at proving 
conjecture \ref{conj:2}; instead we
focus on reinterpreting the solutions of the Bethe equations 
in terms of $q$-difference opers. 

For the moment we regard the $s_i$'s as given parameters
and consider the Bethe equations \eqref{BE1}. 
Rewriting them as
\begin{align*}
p_1\prod_{j=1}^{l_1}\frac{q^2t_i-t_j} {t_i-q^2t_j}
=-\frac{q_3q^{-\mu}t_i-q_3^{-1}q^{-\mu}}{t_i-q^{-2\mu}}
\prod_{j=1}^{l_0}\frac{q t_i-d s_j}{t_i-q d s_j}
\frac{q t_i-d^{-1}s_j}{t_i-q d^{-1} s_j}\,,
\end{align*}
we see that they take the form of the Bethe equations for
 $U_q\mathfrak{sl}_2$ associated with the module 
\begin{align*}
W_{\lambda_1,x_1}\otimes\Bigl(\bigotimes_{j=1}^{l_0}
\bigl(\C^2_{d s_j}\otimes \C^2_{d^{-1}s_j}\bigr)\Bigr) \,,
\qquad 
q^{\lambda_1}=q_3q^{-\mu}\,,\ x_1=q_3^{-1}q^{-\mu}\,,
\end{align*}
in the notation of \eqref{sl2-module} (we recall that $q_1=q^{-1}d$, $q_3=q^{-1}d^{-1}$).
The corresponding $U_q\mathfrak{sl}_2$-oper is given by 
\begin{align}
L&=a_0(x)\sigma_{q_2}^2-a_1(x)\sigma_{q_2}+a_2(x)\,,
\label{oper-Verma}\\
&=a_0(x)\bigl(\sigma_{q_2}-b_2(x)\bigr)\bigl(\sigma_{q_2}-b_1(x)\bigr)\,.\nn
\end{align}
Here 
\begin{align}
&a_0(q_2^{-1}x)=p_1q_2^{-l_0-l_1}(x-q^{\lambda_1}x_1)\prod_{j=1}^{l_0}
(x-q_1^{-1}s_j)(x-q_3^{-1}s_j)\,,
\label{a0a1a2}\\ 
&a_2(q_2^{-1}x)=q_2^{-l_0}(q^{\lambda_1}x-x_1)\prod_{j=1}^{l_0}
(x-q_1s_j)(x-q_3s_j)\,,
\nn\\
&a_1(q_2^{-1}x)=
a_{1,0}x^{2l_0+1}+\sum_{j=1}^{2l_0}a_{1,j}x^{2l_0+1-j}+a_{1,2l_0+1}\,,
\nn
\end{align}
with some coefficients $a_{1,j}$, and
\begin{align*}
&b_1(q_2^{-1}x)=\frac{y_1(x)}{y_1(q_2^{-1}x)}\,,
\quad
b_2(q_2^{-1}x)=p_1^{-1}q_2^{l_1-2l_0}\Psi^{\mu}_1(x)\frac{y_1(q_2^{-1}x)}{y_1(x)}
\frac{y_0(q_1^{-1}x)y_0(q_3^{-1}x)} {y_0(q_1x)y_0(q_3x)}\,. 
\end{align*}
The fact that the equation $Ly=0$ has a polynomial solution $y_1(x)$ of degree $l_1$ 
yields the following expression for the first and the last coefficients of $a_1(x)$:
\begin{align}
&a_{1,0}=p_1q_2^{-l_0}+q_2^{-l_0-l_1}q^{\lambda_1}\,,\quad
a_{1,2l_0+1}=-(p_1q_2^{-l_1}q^{\lambda_1}+q_2^{-2l_0})
x_1 \prod_{j=1}^{l_0} s_j^2\,.\label{a1}
\end{align}
In addition, as was explained in introduction, there are 
apparent singularities corresponding to $\C^2_{d^{\pm1} s_j}$, namely
$x=q_1s_i,q_3s_i$ and $x=q^{-2}q_1s_i,x=q^{-2}q_3s_i$: 
\begin{align}\label{Del13}
&\Delta_1(q_1s_i) =\Delta_1(q_3s_i) =0\,,
\qquad 1\le i\le l_0\,,
\\
&\Delta_1(x)=
\left|
\begin{matrix}
 a_1(x) & a_2(x)\\
a_0(q_2^{-1}x)& a_1(q_2^{-1}x)
\end{matrix} 
\right|\,.\nn
\end{align}
The remaining coefficients $\{a_{1,j}\}_{1\le j\le 2l_0}$ are to be determined from these
$2l_0$ equations (regarding $s_i$'s as given).

Now we turn to the zeroth Bethe equations \eqref{BE0}. Setting
$x=q_1s_i, q_2^{-1}q_1s_i$ in $Ly_1=0$ 
we obtain
\begin{align*}
\frac{y_1(q_1s_i)} {y_1(q_3^{-1}s_i)}=
\frac{a_1(q_1s_i)}{a_2(q_1s_i)}
=\frac{a_0(q_2^{-1}q_1s_i)}{a_1(q_2^{-1}q_1s_i)}
\end{align*}
and a similar relation with $q_1$ and $q_3$ interchanged. This allows us
to eliminate $y_1(x)$ from \eqref{BE0} and rewrite it as
\begin{align}
p_0\prod_{j=1}^{l_0}\frac{q_2s_i-s_j} {s_i-q_2s_j} 
=-q_2^{-l_1}
\frac{q^{\mu}s_i-q_3^{-1}q^{-\mu}}{s_i-q_3^{-1}}
\frac{a_2(q_1s_i)}{a_1(q_1s_i)}\frac{a_2(q_3s_i)}{a_1(q_3s_i)}\,,
\quad 1\le i\le l_0\,.
\label{p0ss}
\end{align}

Thus we have rewritten the Bethe equations \eqref{BE0}, \eqref{BE1} in terms of
$q$-opers \eqref{oper-Verma} satisfying the conditions \eqref{a0a1a2}--\eqref{p0ss}. 
We expect that generically all 
$q$-opers \eqref{oper-Verma} satisfying the conditions \eqref{a0a1a2}--\eqref{p0ss}, appear from solutions of  \eqref{BE0}, \eqref{BE1}.

We remark that \eqref{p0ss} can be written equivalently in either of 
the following form.
\begin{align*}
&p_0\prod_{j=1}^{l_0}\frac{q_2s_i-s_j} {s_i-q_2s_j} 
=-q_2^{-l_1}
\frac{q^{\mu}s_i-q_3^{-1}q^{-\mu}}{s_i-q_3^{-1}}
\frac{a_1(q_2^{-1}q_1s_i)}{a_0(q_2^{-1}q_1s_i)} \frac{a_1(q_2^{-1}q_3s_i)}{a_0(q_2^{-1}q_3s_i)}
\,,\\
&p_0p_1\prod_{j=1}^{l_0}\frac{q_3^2s_i-s_j}{s_i-q_3^2s_j}
=-q_1^{l_0}q_3^{-l_0-1}
\frac{a_1(q_2^{-1}q_3s_i)}{a_1(q_1s_i)}\,,
\\
&p_0p_1\prod_{j=1}^{l_0}\frac{q_1^2s_i-s_j}{s_i-q_1^2s_j}
=-q_3^{l_0+1}q_1^{-l_0-2}
\frac{s_i-q_3^{-1}q_2^{-\mu}}{s_i-q_1^{-1}q_2^{-\mu}}
\frac{s_i-q_1q_3^{-2}}{s_i-q_3^{-1}}
\frac{a_1(q_2^{-1}q_1s_i)}{a_1(q_3s_i)}
\,.
\end{align*}

\subsection{Evaluation relaxed Verma modules}\label{sec:Wakimoto}
We extend our considerations in the previous subsection to the evaluation $\E^\perp$-module
obtained from the relaxed Verma module of $U_q(\widehat{\mathfrak{gl}}_2)$. 
We recall below the definition from \cite{JM}.

Let $\mu,\nu\in\C$, $\mu,\nu,\mu+\nu\not\in\Z$. 
Consider the $U_q\mathfrak{gl}_2$ module $L^{\mu,\nu}=\oplus_{i\in\Z}\C\, v_i$
given by
\begin{align*}
e_1 v_i=c_i v_{i-1}\,,\quad f_1 v_i=v_{i+1}\,, \quad
K v_i=q_3q^{-\mu-2\nu-2i}v_i\,,\quad \mathfrak{t}v_i=v_i\,,
\end{align*}
where 
\begin{align*}
c_i=\frac{(q_3q^{-\mu-\nu-i+1}-q_3^{-1}q^{\mu+\nu+i-1})
(q^{\nu+i}-q^{-\nu-i})}{(q-q^{-1})^2}\,.
\end{align*}
The module $L^{\mu,\nu}$ is characterized by the eigenvalue $q^{-\mu-2\nu}$ of $K$ on $v_0$,
and the value of the Casimir element 
$\mathcal{C}=q_3q^{-\mu+1}+q_3^{-1}q^{\mu-1}$. 

Let $U^+$ be the subalgebra of $U=U_q(\widehat{\mathfrak{gl}}_2)$ generated by
$x^\pm_{k}$, $h_{i,r}$ with $k\ge0$, $r>0$, and $K^{\pm1}$, $\mathfrak{t}^{\pm1}$, 
$C^{\pm1}$. Extending the action of $U_q\mathfrak{gl}_2$ on $L^{\mu,\nu}$ to $U^+$ by
\begin{align*}
x^{\pm}_k=h_{i,k}=0\,\quad \text{for $k>0$}\,, \quad C=q_3\,,
\end{align*} 
we define the induced $U$-module 
$\hat{L}^{\mu,\nu}=\mathrm{Ind}^U_{U^+}L^{\mu,\nu}$
of level $q_3$.
Define further $G_1^{\mu,\nu}$ to be the $\E^\perp$ module obtained from $\hat{L}^{\mu,\nu}$
via the evaluation map $ev_u$.  We call $G_1^{\mu,\nu}$
the evaluation relaxed Verma module. 
Unlike $G_1^\mu$, the module $G_1^{\mu,\nu}$ is not highest weight.

The module $G_1^{\mu,\nu}$ has a $\Z^2$-grading 
$G_1^{\mu,\nu}=\oplus_{l,d\in\Z}(G_1^{\mu,\nu})_{l,d}$ such that 
$(G_1^{\mu,\nu})_{l,d}=0$ for $d>0$, and its graded components have dimensions given 
as follows, see Figure \ref{flat pic} below:
\begin{align*}
\sum_{l,d\in\Z}\dim(G^{\mu,\nu}_1)_{l,d}\ z^l t^{-d} 
=\frac{1}{\prod_{j=1}^\infty(1-t^j)^4}\, \delta(z)\,.
\end{align*}
By construction, there is an isomorphism $G_1^{\mu,\nu}\simeq G_1^{\mu,\nu+1}$
of $\E^\perp$-modules 
which sends $(G_1^{\mu,\nu})_{l-1,d}$ to $(G_1^{\mu,\nu+1})_{l,d}$.

\vskip1cm

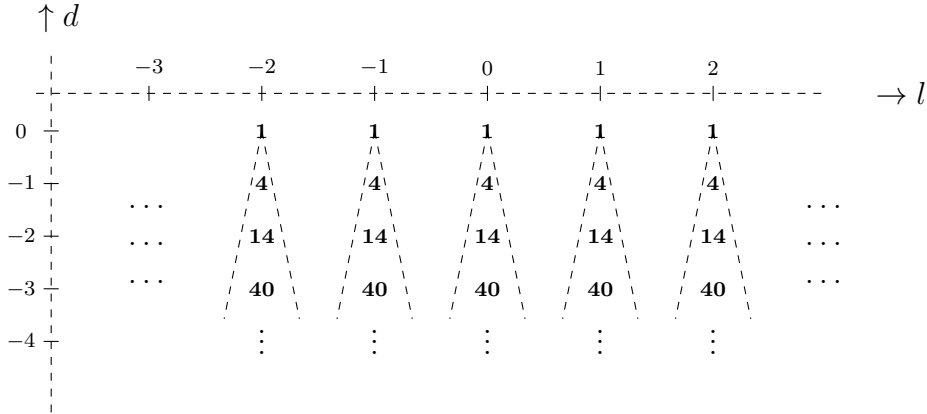
\begin{figure}[ht]
\centering
\begin{tikzpicture}
\draw[dashed] (-6,0.5) -- (4.5,0.5);
\node at (5.5,0.5) {$\to l$};
\draw[dashed] (-5.8,1.0) -- (-5.8,-3.8);
\node at (-5.7,1.5) {$\uparrow d$};
\coordinate (a1) at (-4.5,0);
\coordinate (a2) at (-3,0);
\coordinate (a3) at (-1.5,0);
\coordinate (a4) at (0,0);
\coordinate (a5) at (1.5,0);
\coordinate (a6) at (3,0);

\node[above] at ($(a1)+(0,0.6)$) {\tiny $-3$}; 
\node[above] at ($(a2)+(0,0.6)$) {\tiny $-2$};
\node[above] at ($(a3)+(0,0.6)$) {\tiny $-1$};
\node[above] at ($(a4)+(0,0.6)$) {\tiny $0$};
\node[above] at ($(a5)+(0,0.6)$){\tiny $1$};
\node[above] at ($(a6)+(0,0.6)$) {\tiny $2$};

\node at (-6.2,0) {\tiny $0$};
\draw (-5.9,0) -- (-5.7,0);
\node at (-6.2,-0.7) {\tiny $-1$};
\draw (-5.9,-0.7) -- (-5.7,-0.7);
\node at (-6.2,-1.4) {\tiny $-2$};
\draw (-5.9,-1.4) -- (-5.7,-1.4);
\node at (-6.2,-2.1) {\tiny $-3$};
\draw (-5.9,-2.1) -- (-5.7,-2.1);
\node at (-6.2,-2.8) {\tiny $-4$};
\draw (-5.9,-2.8) -- (-5.7,-2.8);

\draw ($(a1)+(0,0.4)$) -- ($(a1)+(0,0.6)$);
\draw ($(a2)+(0,0.4)$) -- ($(a2)+(0,0.6)$);
\draw ($(a3)+(0,0.4)$) -- ($(a3)+(0,0.6)$);
\draw ($(a4)+(0,0.4)$) -- ($(a4)+(0,0.6)$);
\draw ($(a5)+(0,0.4)$) -- ($(a5)+(0,0.6)$);
\draw ($(a6)+(0,0.4)$) -- ($(a6)+(0,0.6)$);

\draw[dashed] (a2) --++ (0.5,-2.5);
\draw[dashed] (a2) --++ (-0.5,-2.5);
\draw[dashed] (a3) --++ (0.5,-2.5);
\draw[dashed] (a3) --++ (-0.5,-2.5);
\draw[dashed] (a4) --++ (0.5,-2.5);
\draw[dashed] (a4) --++ (-0.5,-2.5);
\draw[dashed] (a5) --++ (0.5,-2.5);
\draw[dashed] (a5) --++ (-0.5,-2.5);
\draw[dashed] (a6) --++ (0.5,-2.5);
\draw[dashed] (a6) --++ (-0.5,-2.5);

\node at ($(a2)-(0,0)$) {\tiny${\bf 1}$};
\node at ($(a2)-(0,0.7)$) {\tiny${\bf 4}$};
\node at ($(a2)-(0,1.4)$) {\tiny${\bf 14}$};
\node at ($(a2)-(0,2.1)$) {\tiny${\bf 40}$};
\node at ($(a2)-(0,2.7)$) {$\vdots$};

\node at ($(a3)-(0,0)$) {\tiny${\bf 1}$};
\node at ($(a3)-(0,0.7)$) {\tiny${\bf 4}$};
\node at ($(a3)-(0,1.4)$) {\tiny${\bf 14}$};
\node at ($(a3)-(0,2.1)$) {\tiny${\bf 40}$};
\node at ($(a3)-(0,2.7)$) {$\vdots$};

\node at ($(a4)-(0,0)$) {\tiny${\bf 1}$};
\node at ($(a4)-(0,0.7)$) {\tiny${\bf 4}$};
\node at ($(a4)-(0,1.4)$) {\tiny${\bf 14}$};
\node at ($(a4)-(0,2.1)$) {\tiny${\bf 40}$};
\node at ($(a4)-(0,2.7)$) {$\vdots$};

\node at ($(a5)-(0,0)$) {\tiny${\bf 1}$};
\node at ($(a5)-(0,0.7)$) {\tiny${\bf 4}$};
\node at ($(a5)-(0,1.4)$) {\tiny${\bf 14}$};
\node at ($(a5)-(0,2.1)$) {\tiny${\bf 40}$};
\node at ($(a5)-(0,2.7)$) {$\vdots$};

\node at ($(a6)-(0,0)$) {\tiny${\bf 1}$};
\node at ($(a6)-(0,0.7)$) {\tiny${\bf 4}$};
\node at ($(a6)-(0,1.4)$) {\tiny${\bf 14}$};
\node at ($(a6)-(0,2.1)$) {\tiny${\bf 40}$};
\node at ($(a6)-(0,2.7)$) {$\vdots$};

\node at (4.5,-1) {$\dots$};
\node at (4.5,-1.5) {$\dots$}; 
\node at (4.5,-2) {$\dots$}; 

\node at (-4.5,-1) {$\dots$};
\node at (-4.5,-1.5) {$\dots$}; 
\node at (-4.5,-2) {$\dots$};

\end{tikzpicture}
\caption{Grading in the evaluation relaxed Verma module $G_1^{\mu,\nu}$}
\label{flat pic}
\end{figure}

The Bethe algebra $\mathcal{A}(p_0,p_1)$
acts on each graded component $(G_1^{\mu,\nu})_{l,d}$. 
Without loss of generality we may restrict our attention to the case $l=0$. 

First let $d=0$. As mentioned earlier, the action of
 $\mathcal{A}(p_0,p_1)$ on  $(G_1^{\mu,\nu})_{0,0}$ 
is independent of $p_0$
and coincides with that of the Bethe algebra of $U_q(\widehat{\mathfrak{gl}}_2)$. 
The $q$-oper corresponding to the latter on  $(G_1^{\mu,\nu})_{0,0}$ can 
be calculated, either by using the known 
formula for the universal $R$ matrix of $U_q\widehat{\mathfrak{sl}}_2$, 
or by analytic continuation in the parameters:
\begin{align}
L^{top}&=p_1q_2^{-\nu+1}(x-q^{\lambda_1-2}x_1)\sigma_{q_2}^2
-\bigl((p_1q_2+q_2^{-\nu+1}q^{\lambda_1})x-(p_1q^{\lambda_1}+1)x_1\bigr)
\sigma_{q_2}+q_2(q^{\lambda_1}x-q_2^{-1}x_1)
\label{Ltop}\\
&=q_2^{-\nu+1}x(p_1\sigma_{q_2}-q^{\lambda_1})(\sigma_{q_2}-q_2^{\nu})
-x_1(p_1q_2^{-\nu}q^{\lambda_1}\sigma_{q_2}-1)(\sigma_{q_2}-1)\,,
\nn
\end{align}
where
\begin{align}
q^{\lambda_1}=q_3q^{-\mu}\,,\quad x_1=q_3^{-1}q^{-\mu}\,.
\label{lax1}
\end{align}
This is a $q$-hypergeometric oper depending on three independent 
parameters $q^\mu,q^\nu$ and $p_1$.

For general $d$, we observe that 
 $G_1^{\mu,\nu}$ is an 
``analytic continuation'' of $G_1^\mu$ in the following sense.
The action of each generator of $\E^\perp$ on 
$(G_1^{\mu})_{l,d}$ for $l\ll 0$ depends on $l$ only through $q^l$.
Replacing  $q^{l}$ by $q^{-\nu}$ we obtain the action on
$(G_1^{\mu,\nu})_{0,d}$.

This suggests that the corresponding 
 $q$-opers should be obtained from the 
formulas in the previous section by the substitution $l_1=l_0+\nu$.

Let us state this as a conjecture. 
\begin{conj}\label{conj:1}
Assume that the parameters 
$q,q_3,\mu,\nu, \pz{0},\pz{1}$ are generic, and
let $\lambda_1,x_1$ be as in \eqref{lax1}. 
Then for each $l_0\in\Z_{\ge0}$
there exists a one-to-one correspondence between
joint eigenvectors of $\cA(\pz{0},\pz{1})$
on $(G_1^{\mu,\nu})_{0,-l_0}$
and the set of opers of the form 
\begin{align}
&L=a_0(x)\sigma_{q_2}^2-a_1(x)\sigma_{q_2}+a_2(x)\,,
\label{oper-relax}
\\
&a_0(x)=p_1q_2^{-\nu+1}(x-q^{\lambda_1-2}x_1)
\prod_{j=1}^{l_0}(x-q_1s_j)(x-q_3s_j)\,,
\nn\\ 
&a_2(x)=q_2^{l_0+1}(q^{\lambda_1}x-q^{-2}x_1)
\prod_{j=1}^{l_0}(x-q_2^{-1}q_1s_j)(x-q_2^{-1}q_3s_j)\,,
\nn\\
&a_1(x)=(p_1q_2^{l_0+1}+q_2^{-\nu+1}q^{\lambda_1})x^{2l_0+1}\nn\\
&+\sum_{j=1}^{2l_0}a_{1,j}x^{2l_0+1-j}
-(p_1q_2^{-l_0-\nu}q^{\lambda_1}+q_2^{-2l_0})
x_1 \prod_{j=1}^{l_0} s_j^2\,,
\nn
\end{align}
satisfying the conditions 
\begin{align}
& \Delta_1(q_1s_j) =\Delta_1(q_3s_j) =0\,,
\qquad 1\le j\le l_0\,,
\label{apparent-relax}\\
&p_0p_1\prod_{j=1}^{l_0}\frac{q_3^2s_i-s_j}{s_i-q_3^2s_j}
=-q_1^{l_0}q_3^{-l_0-1}
\frac{a_1(q_2^{-1}q_3s_i)}{a_1(q_1s_i)}\,,
\quad 1\le i\le l_0\,,
\label{BE0-relax}
\end{align}
where $\Delta_1(x)$ is given by \eqref{Del13}.
\end{conj}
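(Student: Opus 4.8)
The plan is to reduce the statement to the analogous correspondence for the evaluation Verma module, Conjecture \ref{conj:2}, and then to pass to the relaxed module by analytic continuation in $\nu$. The guiding principle is the one recorded above: the action of each generator of $\E^\perp$ on $(G_1^\mu)_{l,d}$ with $l\ll0$ depends on $l$ only through $q^l$, and replacing $q^l$ by $q^{-\nu}$ produces the action on $(G_1^{\mu,\nu})_{0,d}$. Under this replacement the Verma data in degree $(l_0-l_1,-l_0)$ with $l_1\in\Z_{\ge0}$ should go over, after the formal substitution $l_1=l_0+\nu$ and an overall rescaling of $L$ and of the variable, to the relaxed data in degree $(0,-l_0)$. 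As a consistency check, the base case $l_0=0$ gives $\dim (G_1^{\mu,\nu})_{0,0}=1$ and the unique oper $L^{top}$ of \eqref{Ltop}, which is exactly the $q$-hypergeometric oper obtained from \eqref{oper-Verma}--\eqref{a0a1a2} at $l_0=0$, $l_1=\nu$.

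For the forward direction I would start from a joint eigenvector $w\in(G_1^{\mu,\nu})_{0,-l_0}$, read off the eigenvalue $y_0(x)=\prod_{i=1}^{l_0}(x-s_i)$ of the normalized zeroth $Q$-operator and the eigenvalue $a_1(x)$ of the normalized transfer matrix, and assemble $L$ as in \eqref{oper-relax}. The outer coefficients $a_0,a_2$ are fixed by the highest $\ell$-weight data \eqref{lax1} and the evaluation factors $\C^2_{d^{\pm1}s_j}$, precisely as in \eqref{a0a1a2} after the substitution $l_1=l_0+\nu$; these evaluation factors are responsible for the apparent singularities at $x=q_1s_j,q_3s_j$ and their $q_2^{-1}$-shifts, so Proposition \ref{prop:2nd} with $r=1$ yields the conditions \eqref{apparent-relax}. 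Finally, the zeroth Bethe equation, after eliminating the first-node polynomial exactly as in the passage from \eqref{BE0} to \eqref{p0ss}, becomes \eqref{BE0-relax}.

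For the inverse direction I would regard an oper of the form \eqref{oper-relax} as parametrized by the roots $s_1,\dots,s_{l_0}$ together with the $2l_0$ free coefficients $\{a_{1,j}\}_{1\le j\le 2l_0}$, and note that the $2l_0$ apparent-singularity relations \eqref{apparent-relax} are linear in the $\{a_{1,j}\}$ and generically determine them as functions of the $s_j$, while the $l_0$ equations \eqref{BE0-relax} then fix the $s_j$. One thus gets a finite set of opers, which I would match bijectively against joint eigenvectors by invoking Conjecture \ref{conj:2} at $l_1=l_0+n$, $n\in\Z_{\ge0}$, and continuing $n\mapsto\nu$; both sides are to be counted by $\dim(G_1^{\mu,\nu})_{0,-l_0}$, which the character formula identifies with the number of $4$-colored partitions of $l_0$ (the values $1,4,14,40,\dots$ of Figure \ref{flat pic}).

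The main obstacle is the analytic continuation itself. For $\nu\notin\Z$ the first-node Bethe object $y_1$ of degree $l_1=l_0+\nu$ is no longer a polynomial, so the clean ``polynomial solution of $Ly=0$'' characterization used in the Verma case is unavailable; the point of the statement is precisely that the oper is nonetheless pinned down by the apparent-singularity and Bethe data in the $s_j$ and $a_{1,j}$, which survive continuation. To make the reduction rigorous one must show, for generic $q,q_3,\mu,\nu,p_0,p_1$, that the spectrum of $\cA(p_0,p_1)$ on $(G_1^{\mu,\nu})_{0,-l_0}$ stays simple, that no eigenvalue families collide or run off to the boundary of parameter space as $n\to\nu$, and that the number of solutions of \eqref{apparent-relax}--\eqref{BE0-relax} equals $\dim(G_1^{\mu,\nu})_{0,-l_0}$. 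This completeness-and-nondegeneracy statement---equivalently, surjectivity and injectivity of the correspondence after continuation---is the crux, and controlling the analytic dependence of Bethe eigenvectors on $\nu$ is exactly what prevents the present argument from being unconditional.
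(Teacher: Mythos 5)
Your proposal follows essentially the same route as the paper: the statement is a conjecture there, not a theorem, and the paper's own justification is exactly your argument --- reformulate the Verma-case Bethe equations as the opers \eqref{oper-Verma} subject to \eqref{a0a1a2}--\eqref{p0ss}, substitute $l_1=l_0+\nu$ on the grounds that $G_1^{\mu,\nu}$ is the ``analytic continuation'' of $G_1^\mu$ obtained by replacing $q^{l}$ with $q^{-\nu}$, identify the $l_0=0$ case with the $q$-hypergeometric top oper \eqref{Ltop}, and check consistency by counting solutions of the linear system in the $a_{1,j}$ (the paper verifies $l_0=1$ gives $4$ opers, matching the character of $(G_1^{\mu,\nu})_{0,-1}$). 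The obstacles you name --- simplicity of the spectrum, completeness of the solution count, and control of the eigenvectors under continuation in $\nu$ --- are precisely why the paper states this as a conjecture rather than proving it, so your assessment of where the argument stops being rigorous agrees with the paper's own.
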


The oper \eqref{oper-relax} is obtained from the top oper
\eqref{Ltop} by adjoining $2l_0$ apparent singularities $q_1s_j,q_3s_j$. 
The $3l_0$ unknowns 
$\{a_{1,j}\in\C\mid 1\le j\le 2l_0\}\cup\{s_j\in\C^{\times}\mid 1\le j\le l_0\}$
are to be determined from the $3l_0$ algebraic equations
\eqref{apparent-relax}, 
\eqref{BE0-relax}.

We remark that one of \eqref{apparent-relax}, e.g. $\Delta_1(q_3s_j)=0$, 
can be replaced by
\begin{align*}
&p_0p_1\prod_{j=1}^{l_0}\frac{q_1^2s_i-s_j}{s_i-q_1^2s_j}
=-q_3^{l_0+1}q_1^{-l_0-2}
\frac{s_i-q_3^{-1}q_2^{-\mu}}{s_i-q_1^{-1}q_2^{-\mu}}
\frac{s_i-q_1q_3^{-2}}{s_i-q_3^{-1}}
\frac{a_1(q_2^{-1}q_1s_i)}{a_1(q_3s_i)}
\,.
\end{align*}
These equations and \eqref{BE0-relax} are linear in the coefficients $a_{1,j}$, 
which can be solved in terms of $s_i$'s.
Using this, 
we have checked that the number of such opers for $l_0=1$ is $4$, 
in agreement with the character of $G_1^{\mu,\nu}$.

\bigskip

In \cite{JM} we considered also evaluation slanted relaxed Verma modules.
These are modules obtained by twisting $G_1^{\mu,\nu}$ by the 
automorphism 
\begin{align*}
&E_0^\perp(z)\mapsto z^{-m}E_0^\perp(z)\,,\quad 
F_0^\perp(z)\mapsto z^{m}F_0^\perp(z)\,,\\
&E_1^\perp(z)\mapsto z^{m}E_1^\perp(z)\,,\quad 
F_1^\perp(z)\mapsto z^{-m}F_1^\perp(z)\,,\\
&K^{\pm,\perp}_i(z)\mapsto K^{\pm,\perp}_i(z)\,,
\end{align*}
where $m\in \Z$ is called slope. 
It is natural to ask how the spectrum is described in terms of $q$-opers
for such modules. Except in the cases $m=0,1$ we do not know the answer. 

\section{Higher rank}\label{sec:higher}

In this section we examine
the possibility of generalizing the content of section \ref{sec:Main}
to quantum toroidal $\mathfrak{gl}_n$ algebras, which we denote
by $\E^{\perp}_n$, with $n\ge3$.

As in the $\mathfrak{gl}_2$ case, we expect that 
the Bethe ansatz method works for highest weight $\E^\perp_n$-modules with generic parameters.   
Let $\Psi(x)=(\Psi_0(x)\,,\ldots,\Psi_{n-1}(x))$ be the highest $\ell$-weight 
of the module considered, 
where $\Psi_i(x)$ are some rational functions such that $\Psi_i(0)\Psi_i(\infty)=1$. 
The expected Bethe ansatz equations read as 
\begin{align*}
(BAE)_\nu:\quad
&p_\nu q_2^{-l_\nu+(l_{\nu-1}+l_{\nu+1})/2}
\frac{y_\nu(q_2s^{(\nu)}_i)}{y_\nu(q_2^{-1}s^{(\nu)}_i)}
\frac{y_{\nu-1}(q_3s^{(\nu)}_i)}{y_{\nu-1}(q_1^{-1}s^{(\nu)}_i)}
\frac{y_{\nu+1}(q_1s^{(\nu)}_i)}{y_{\nu+1}(q_3^{-1}s^{(\nu)}_i)}
=-\Psi_\nu(s^{(\nu)}_i)\,,
\\
&\nu\in\Z/n\Z\,,\quad 1\le i\le l_\nu\,,
\end{align*}
where $p_\nu$'s are twist parameters entering the transfer matrix, and
$y_\nu(x)=\prod_{i=1}^{l_\nu}(x-s^{(\nu)}_i)$.
We will assume that $s^{(\mu)}_i/s^{(\nu)}_j\not\in q_2^{\Z}$ unless
$(\mu,i)=(\nu,j)$,
and that the $s^{(\nu)}_i$'s are not zeroes nor poles of 
$\Psi_\mu(q_1^iq_2^jx)$, $i,j\in\Z$, $\mu\in \Z/n\Z$.

If we regard $s^{(0)}_j$'s as given parameters, then  
(BAE)$_\nu$ with $1\le \nu\le n-1$ have the form of  
the Bethe equations for $U_q\widehat{\mathfrak{gl}}_n$. 
Following the procedure explained in \cite{MV}, 
we associate with them a $q$-difference oper of order $n$ (called a Miura $q$-oper)
\begin{align*}
L=a_0(x) (\sigma_{q_2}-b_n(x))\cdots(\sigma_{q_2}-b_2(x))(\sigma_{q_2}-b_1(x))\,.
\end{align*}
The leading coefficient $a_0(x)$ will be chosen later. 
Set
\begin{align*}
&\tilde{\Psi}_\nu(x)=(p_\nu q_2^{-l_\nu+(l_{\nu-1}+l_{\nu+1})/2})^{-1}{\Psi}_\nu(x)
=\frac{G_\nu(x)} {F_\nu(x)}
\,,
\end{align*}
where $F_\nu(x),G_\nu(x)$ are relatively prime polynomials, 
$F_\nu(x)$ being monic.
The $b_i(x)$'s are given by 
\begin{align}
&\frac{b_{i+1}(q_2^{-1}x)}{b_i(x)} 
=\tilde{\Psi}_i(q_3^{-i+1}x)
\frac{y_{i-1}(q_3^{-i+1}q_1^{-1}x)}{y_{i-1}(q_3^{-i+2}x)}
\frac{y_i(q_3^{-i+1}q_2^{-1}x)}{y_i(q_3^{-i+1}q_2x)}
\frac{y_{i+1}(q_3^{-i}x)}{y_{i+1}(q_3^{-i+1}q_1x)}\,,
\quad 1\le i\le n-1\,,
\label{b-rec}
\\
&b_1(x)=\frac{y_1(q^2x)}{y_1(x)}\,.
\nn
\end{align}
Explicitly we have
\begin{align*}
&b_i(x)=\prod_{\nu=1}^{i-1}\tilde{\Psi}_{\nu}(q_1^{\nu-1}q_2^{i-1}x)
\cdot
\frac{y_{i-1}(q_3^{-i+2}x)}{y_{i-1}(q_3^{-i+2}q_2x)}
\frac{y_i(q_3^{-i+1}q_2x)}{y_i(q_3^{-i+1}x)}
\frac{y_0(q_3 q_2^i x)}{y_0(q_3 q_2^{i-1} x)}\,
\quad (2\le i\le n-1),\\
&b_n(x)=\prod_{\nu=1}^{n-1}
\tilde{\Psi}_{\nu}(q_1^{\nu-1}q_2^{n-1}x)
\cdot
\frac{y_{n-1}(q_3^{-n+2}x)}{y_{n-1}(q_3^{-n+2}q_2x)}
\frac{y_0(q_3^{-n+1}q_2x)}{y_0(q_3^{-n+1}x)}
\frac{y_0(q_3 q_2^n x)}{y_0(q_3 q_2^{n-1} x)}
\,.
\end{align*}
If we write 
\begin{align}
&L=\sum_{k=0}^n(-1)^{k}a_k(x)\sigma_{q_2}^{n-k}
=a_0(x)\sum_{k=0}^n(-1)^kt_k(x)\sigma_{q_2}^{n-k}\,, 
\qquad t_k(x)=\frac{a_k(x)}{a_0(x)}\,,
\label{oper-rankn}
\end{align}
the coefficients $t_k(x)$ are given in terms of $b_i(x)$'s as  
\begin{align}
&t_k(x)=
\sum_{1\le i_1<\cdots<i_k\le n}
b_{i_k}(q_2^{n-i_k}x)
\cdots 
b_{i_p}(q_2^{n-i_p-k+p}x)
\cdots
b_{i_1}(q_2^{n-i_1-k+1}x)
\,.
\label{tkx}
\end{align}

We can choose $a_0(x),a_n(x)$ as 
\begin{align}
&a_0(x)=\prod_{1\le \nu<i\le n}F_\nu(q_1^{\nu-1}q_2^{i-1}x)
\cdot
\prod_{j=1}^{n-1}y_0(q_3q_2^jx)
\cdot y_0(q_3^{-n+1}x)
\,,\label{a0-rankn}
\\
&a_n(x)=\prod_{1\le \nu<i\le n}G_\nu(q_1^{\nu-1}q_2^{i-1}x)
\cdot
\prod_{j=1}^{n-1}y_0(q_3q_2^{j+1}x)
\cdot  y_0(q_3^{-n+1}q_2x)
\,.\label{bottom-rankn}
\end{align}

\begin{prop}
With the definition \eqref{a0-rankn}, \eqref{bottom-rankn} and 
under the Bethe equations (BAE)$_\nu$ with $1\le \nu\le n-1$, 
the coefficients $a_k(x)$ are polynomials for all $0\le k\le n$. 
\end{prop}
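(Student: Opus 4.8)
The plan is to exhibit each $a_k(x)=a_0(x)t_k(x)$ as a rational function with no finite poles, so that, being rational and holomorphic on all of $\C$, it must be a polynomial. Since every $b_i(x)$ is a product of factors $\tilde{\Psi}_\mu$ and of ratios of the polynomials $y_0,\dots,y_{n-1}$, the expression \eqref{tkx} shows that $t_k(x)$ is rational, and its poles can only occur at the zeros of the denominators $F_\nu$ of $\tilde{\Psi}_\nu=G_\nu/F_\nu$ and at the zeros of $y_\nu(cx)$ for the various shifts $c$ appearing in \eqref{tkx}. I would separate these into the \emph{external} poles coming from $F_\nu$ ($1\le\nu\le n-1$) and from $y_0$, and the \emph{internal} poles coming from $y_\nu$ with $1\le\nu\le n-1$.

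For the external poles the claim is that they are removed by the explicit prefactors of $a_0(x)$ in \eqref{a0-rankn}. From the explicit formulas for the $b_i(x)$ one records that $F_\nu$ enters only through $\tilde{\Psi}_\nu$ and that $y_0$ enters each $b_i$ only through the telescoping ratio $y_0(q_3q_2^ix)/y_0(q_3q_2^{i-1}x)$. In the full product $t_n$ these $y_0$-ratios telescope and the $F_\nu$-contributions are precisely those of \eqref{a0-rankn}--\eqref{bottom-rankn}, so $a_n=a_0t_n$ is polynomial by construction; a general summand of \eqref{tkx} omits some factors, which can only decrease the number of $F_\nu$- and $y_0$-denominators, so after tracking the $q_2$-shifts one checks that the zeros of $a_0(x)$ still dominate them. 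This part is a finite but tedious bookkeeping of shifts which I would treat as routine.

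The main point is the cancellation of the internal poles. Fix $\nu$ with $1\le\nu\le n-1$. From the explicit formulas, $y_\nu$ occurs only in $b_\nu$ and $b_{\nu+1}$, and there reciprocally: $b_\nu(x)$ carries $y_\nu(q_3^{-\nu+1}q_2x)/y_\nu(q_3^{-\nu+1}x)$ while $b_{\nu+1}(x)$ carries $y_\nu(q_3^{-\nu+1}x)/y_\nu(q_3^{-\nu+1}q_2x)$. Hence any summand of \eqref{tkx} indexed by a subset containing both $\nu$ and $\nu+1$ is regular along $y_\nu=0$ (the shifts of $b_\nu$ and $b_{\nu+1}$ in \eqref{tkx} coincide, so the reciprocal $y_\nu$-factors cancel within the summand), while a subset containing neither contributes no such pole. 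Thus only subsets $S$ containing exactly one of $\nu,\nu+1$ are relevant, and these come in natural pairs $S\leftrightarrow S'=(S\setminus\{\nu\})\cup\{\nu+1\}$. The two summands attached to $S$ and $S'$ share a common factor (the product over the remaining indices, at identical arguments) times $b_\nu(q_2^mx)$ and $b_{\nu+1}(q_2^{m-1}x)$ respectively, where $m$ is the shift dictated by the position of $\nu$ in $S$. A direct check shows both of these place their $y_\nu$-pole at the same point $x_*$ and that
\[
\mathrm{Res}_{x=x_*}\bigl(b_\nu(q_2^mx)+b_{\nu+1}(q_2^{m-1}x)\bigr)
=q_2^{-m}\Bigl(\mathrm{Res}_{y=a}b_\nu(y)+q_2\,\mathrm{Res}_{y=q_2^{-1}a}b_{\nu+1}(y)\Bigr),
\]
which is independent of $m$, $k$ and $p$. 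So the residue of $t_k$ at every zero of $y_\nu$ vanishes precisely when this single identity holds at each Bethe root $s^{(\nu)}_j$.

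Finally I would verify, using the recursion \eqref{b-rec} for $b_{\nu+1}/b_\nu$, that the residue identity above is exactly the Bethe equation (BAE)$_\nu$ at $s^{(\nu)}_j$: expressing $b_{\nu+1}$ through \eqref{b-rec} produces the ratio $y_\nu(q_2s^{(\nu)}_j)/y_\nu(q_2^{-1}s^{(\nu)}_j)$, the neighbouring factors $y_{\nu\pm1}$, and $\tilde{\Psi}_\nu=\Psi_\nu/(p_\nu q_2^{\cdots})$, reproducing (BAE)$_\nu$ after clearing the common prefactor. With the internal poles removed by the Bethe equations and the external poles absorbed into $a_0(x)$, each $a_k(x)$ is rational and holomorphic on all of $\C$ (the $b_i$ are regular and nonzero at $x=0$, since the roots $s^{(\nu)}_i$ and the zeros of $F_\nu,G_\nu$ are nonzero), hence a polynomial. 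The hard part will be the residue computation of the previous paragraph together with its precise matching to (BAE)$_\nu$, and the careful tracking of the $q_2$-shifts in \eqref{tkx}; the external-pole bookkeeping, though lengthy, is routine.
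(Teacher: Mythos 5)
Your proof is correct and takes essentially the same route as the paper's: the $\tilde{\Psi}_\nu$- and $y_0$-denominators are cancelled against $a_0(x)$, the reciprocal $y_\nu$-factors cancel within summands of \eqref{tkx} containing both $\nu$ and $\nu+1$, and the remaining summands are paired so that the Bethe equation (BAE)$_\nu$ removes the pole of $b_\nu(z)+b_{\nu+1}(q_2^{-1}z)$ at the zeros of $y_\nu(q_3^{-\nu+1}z)$. Your residue identity is precisely the paper's observation that $b_{\nu+1}(q_2^{-1}x)/b_\nu(x)=-1$ at $x=q_3^{\nu-1}s^{(\nu)}_a$, merely phrased in terms of residues.
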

\begin{proof}
Since
the denominators of $t_k(x)$ arising from $\tilde{\Psi}_\nu(x)$ or $y_0(x)$
are canceled by $a_0(x)$, it is enough to check that 
$a_0(x)t_k(x)$ has no pole arising from $y_i(x)$ with $1\le i\le n-1$. 

Recall the definition \eqref{b-rec}. 
Due to the Bethe equations, the ratio $b_{i+1}(q_2^{-1}x)/b_i(x)$
takes the value $-1$
at $x=q_3^{i-1}s^{(i)}_a$. In other words $b_i(x)+b_{i+1}(q_2^{-1}x)$ 
does not have a pole at $y_i(q_3^{-i+1}x)=0$. 
Note also that, in the product $b_i(x)b_{i+1}(x)$, 
the factors containing $y_i(x)$ cancel out.

The assertion follows from these facts and \eqref{tkx}. 
\end{proof}
\medskip

\begin{prop}\label{prop:apparent-rankn}
The points 
\begin{align}
&q_3^{-1}q_2^{-j}s^{(0)}_a\,,\quad 1\le j\le n\,,\ 1\le a\le l_0\,,
\label{app1}\\
&q_3^{n-1}q_2^{-j}s^{(0)}_a\,,\quad j=0,1\,,\ 1\le a\le l_0\,,
\label{app2}
\end{align}
are apparent singularities of ${L}$.
\end{prop}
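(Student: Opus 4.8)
The plan is to reduce the claim to the sufficient conditions established in Proposition \ref{prop:apparent-special}, which gives a criterion for apparent singularities of an $n$-th order $q$-oper phrased entirely in terms of the coefficients $t_k(x)=a_k(x)/a_0(x)$. The statement concerns two clusters of singular points around each Bethe root $s^{(0)}_a$: the points \eqref{app1} coming from the factor $y_0(q_3^{-n+1}x)$ in $a_0(x)$ and the points \eqref{app2} coming from $y_0(q_3 q_2^j x)$. First I would fix a single root $s=s^{(0)}_a$ and verify that $a_0(x)$ and $a_n(x)$ have simple zeroes exactly at the locations demanded by Proposition \ref{prop:apparent-special}, namely $a_0(x)$ vanishing at $q_2^{-j}s'$ for $0\le j\le n-2$ and $a_n(x)$ at $q_2^{-j}s'$ for $1\le j\le n-1$, with $s'$ the appropriate representative of the cluster; this is a matter of reading off the zeroes of \eqref{a0-rankn}, \eqref{bottom-rankn} and the shifts in the factors $y_0(q_3 q_2^j x)$, $y_0(q_3^{-n+1}x)$, using genericity to ensure simplicity and the absence of extraneous zeroes on the $q_2$-line through $s$.

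Next I would check conditions (i)--(iii) of Proposition \ref{prop:apparent-special} for each cluster. For condition (i), I would use the explicit formula \eqref{tkx} together with the product expressions for the $b_i(x)$: a pole of $t_k(x)$ at a shifted point $q_2^{-j}s$ can only come from a factor $y_i(\cdots)$ in the denominator of some $b_i$, and I would track which values of $j$ those poles occur at, confirming they avoid the range $j\neq n-k,n-k-1$ as required. Condition (ii), the vanishing $t_k(q_2^{-n+1}s)=0$ for $2\le k\le n-1$, I expect to follow from the combinatorial structure of \eqref{tkx}: at that special shift a common factor $y_0$ in the numerators forces the whole expression to vanish, or else the relevant $b_i$ factors degenerate. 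The most delicate is condition (iii): the $2\times 2$ determinant built from $t_k$ and $t_{k+1}$ must be holomorphic at $x=s$, and this is precisely where the Bethe equations (BAE)$_\nu$ must be invoked — the cancellation already exploited in the preceding proposition, that $b_i(x)+b_{i+1}(q_2^{-1}x)$ has no pole where $y_i$ vanishes, reappears here in a determinantal guise.

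The hard part will be organizing condition (iii) cleanly: the determinant
\[
\left|\begin{matrix}
 t_k(q_2^{-n+k+1}x)& t_{k+1}(q_2^{-n+k+1}x)\\
 1 & t_1(q_2^{-n+1}x)
\end{matrix}\right|
= t_k(q_2^{-n+k+1}x)\,t_1(q_2^{-n+1}x)-t_{k+1}(q_2^{-n+k+1}x)
\]
must be shown holomorphic at $x=s$, and the potential poles at $s$ of the individual terms (arising from the factor $y_0(q_3^{-n+1}x)$ or from some $y_i$ in the denominators of the $b$'s) have to cancel. I would expand $t_k, t_{k+1}, t_1$ via \eqref{tkx}, isolate the residue at $x=s$ of each, and show the combination vanishes by matching the residue with the $-1$ value of the ratios $b_{i+1}(q_2^{-1}x)/b_i(x)$ that the Bethe equations enforce; the two clusters \eqref{app1} and \eqref{app2} will require slightly different bookkeeping because one sits at the $a_0$-zeroes and the other straddles the boundary between the $a_0$- and $a_n$-zeroes.

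Finally, I would remark that the argument is symmetric under the duality exchanging $a_0(x)$ with $a_n(x)$ and reversing the order of the factors $(\sigma_{q_2}-b_i(x))$, so that the cluster \eqref{app2} near $a_n$ can be handled by the same computation applied to the adjoint oper, reducing the total work to one genuine verification of (i)--(iii) together with a formal transposition.
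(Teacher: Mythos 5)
Your overall frame for the cluster \eqref{app1} --- reduce to Proposition \ref{prop:apparent-special} with $s=q_3^{-1}q_2^{-1}s^{(0)}_a$ and check (i)--(iii) via \eqref{tkx} and the explicit formulas for the $b_i$ --- is the paper's approach, but your proposal has two genuine gaps. The first is your treatment of the cluster \eqref{app2}. You propose to handle it by the same criterion, ``applied to the adjoint oper'' with ``a formal transposition.'' This cannot work, because the two clusters are not mirror images of one another: \eqref{app1} is an $n$-point cluster in which $a_0$ has the $n-1$ zeroes $q_3^{-1}q_2^{-j}s^{(0)}_a$, $1\le j\le n-1$ (coming from the factors $y_0(q_3q_2^jx)$ in \eqref{a0-rankn}), interleaved with $n-1$ zeroes of $a_n$ --- exactly the pattern demanded by Proposition \ref{prop:apparent-special} --- whereas \eqref{app2} is a two-point cluster in which $a_0$ has the single zero $q_3^{n-1}s^{(0)}_a$ (from $y_0(q_3^{-n+1}x)$) and $a_n$ the single zero $q_3^{n-1}q_2^{-1}s^{(0)}_a$. (Incidentally, your first paragraph swaps which factors of $a_0$ produce which cluster.) For $n\ge3$ Proposition \ref{prop:apparent-special} simply does not apply to \eqref{app2}, and no transposition turns a two-point cluster into an $n$-point one. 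What is needed is the case $r=1$, $t=0$ of Proposition \ref{prop:apparent-nth}: one shows directly that $A(x)A(q_2^{-1}x)$ is regular at $x=q_3^{n-1}s^{(0)}_a$, equivalently that $t_k^{(1)}(x)=t_1(x)t_k(q_2^{-1}x)-t_{k+1}(x)$ is regular there; this follows at once from the facts that $b_n(x)$ has a simple pole, $b_n(q_2^{-1}x)$ has a zero, and $b_l(x)$ with $l\neq n$ are regular at that point.

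The second gap is the mechanism you propose for condition (iii) in the cluster \eqref{app1}. You plan to invoke the Bethe equations, i.e.\ the cancellation that $b_i(x)+b_{i+1}(q_2^{-1}x)$ has no pole where $y_i$ vanishes. But (BAE)$_\nu$ with $1\le\nu\le n-1$ forces $b_{i+1}(q_2^{-1}x)/b_i(x)=-1$ only at $x=q_3^{i-1}s^{(i)}_a$, the roots attached to $y_i$ with $i\ge1$; by the standing genericity assumption ($s^{(\mu)}_i/s^{(\nu)}_j\notin q_2^{\Z}$ unless $(\mu,i)=(\nu,j)$) these points do not lie on the $q_2$-line through $q_3^{-1}s^{(0)}_a$, so the Bethe equations carry no information at the points \eqref{app1}. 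They enter this circle of ideas only through the preceding proposition, which guarantees that the $a_k(x)$ are polynomials. The actual cancellation in (iii) is purely structural: by \eqref{b-rec}, the functions $b_i(q_2^{n-i}x)$ with $i\neq1$ share with $b_n(x)$ the same zeroes and poles of type $q_2^js$; consequently, in the expansion \eqref{tkx} of $t_{k+1}(q_2^{-n+k+1}x)$ the only terms with a pole at $x=s$ are those with $i_1=1$ (for $i_1\ge2$ the first factor has a zero at $s$ cancelling the pole of the second factor), and since $t_1(q_2^{-n+1}s)=b_1(s)$, the residue of $t_{k+1}(q_2^{-n+k+1}x)$ at $s$ coincides with that of $t_k(q_2^{-n+k+1}x)\,t_1(q_2^{-n+1}x)$. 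Your plan of isolating residues is the right start, but the cancellation you must exhibit comes from this zero/pole bookkeeping of the $y_0$-factors, not from the Bethe equations; pursuing the latter at these points would lead nowhere.
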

\begin{proof}
First consider the case \eqref{app1}. 
We apply Proposition \ref{prop:apparent-special} 
with $\ssq=q_2$
and $s=q_3^{-1}q_2^{-1}s^{(0)}_a$.

By the definition \eqref{b-rec}, $b_i(q_2^{n-i}x)$ with $i\neq 1$
and $b_n(x)$ share the same 
zeroes and poles of the type $q_2^js$.
By using this and \eqref{tkx},
the conditions (i),(ii) of Proposition \ref{prop:apparent-special} 
can be verified easily. 
Also the residue of $t_{k+1}(q_2^{-n+k+1}x)$ at $x=s$ is given by
\begin{align*}
\mathop{\mathrm{Res}}_{x=s} \sum_{2\le i_2<\cdots<i_{k+1}\le n}
b_{i_{k+1}}(q_2^{-i_{k+1}+k+1}x)\cdots b_{i_2}(q_2^{-i_2+1}x)b_1(x)\,.
\end{align*}
Since $t_1(q_2^{-n+1}s)=b_1(s)$, this coincides with 
the residue of $t_k(q_2^{-n+k+1}x)t_1(q_2^{-n+1}x)$ at $x=s$.
Hence the condition
(iii) is also satisfied.

It remains to check \eqref{app2}.
We are to show that 
$A(x)A(q_2^{-1}x)$ is regular at $x=q_3^{n-1}s^{(0)}_a$, 
or equivalently that
\begin{align*}
t_k^{(1)}(x)=
\left|
\begin{matrix}
 t_1(x) & t_{k+1}(x)\\
1 & t_k(q_2^{-1}x)\\
\end{matrix}
\right|\,,\quad 1\le k\le n-1\,,
\end{align*}
are regular there. This can be shown by noting that
$b_n(x)$ has a pole, $b_n(q_2^{-1}x)$ has a zero, and $b_l(x)$ with $l\neq n$
are regular at $x=q_3^{n-1}s^{(0)}_a$. 
\end{proof}

\begin{prop}\label{prop:BAE0}
The Bethe ansatz equations (BAE)$_0$ can be rewritten as
\begin{align}
&\frac{a_0(q_3^{-1}q_2^{-n}s^{(0)}_i)} {a_1(q_3^{-1}q_2^{-n}s^{(0)}_i)} 
\frac{a_0(q_3^{n-1}q_2^{-1}s^{(0)}_i)}{a_{n-1}(q_3^{n-1}q_2^{-1}s^{(0)}_i)}
\frac{y_0(q_2s^{(0)}_i)}{y_0(q_2^{-1}s^{(0)}_i)}
\frac{y_0(q_1^{-n}q_2^{-1}s^{(0)}_i)}{y_0(q_3^{n}q_2 s^{(0)}_i)}
\label{BAE0-subst}
\\
&=
-\tilde{\Psi}_0(s^{(0)}_i)
\prod_{1\le j< k\le n-1}
\tilde{\Psi}_j(q_3^{n-j}q_2^{k-j}s^{(0)}_i)^{-1}\,,
\qquad 1\le i\le l_0\,.\nn
\end{align}
\end{prop}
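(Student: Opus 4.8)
The plan is to prove \eqref{BAE0-subst} by direct substitution: evaluate the two oper ratios $a_0/a_1$ and $a_0/a_{n-1}$ at the apparent singularities of \eqref{app1}, \eqref{app2}, show that in each case the symmetric-function expansion \eqref{tkx} collapses to a single surviving term, and then feed the result into $(BAE)_0$. For the first ratio, write $t_1(x)=a_1(x)/a_0(x)=\sum_{m=1}^n b_m(q_2^{n-m}x)$ from \eqref{tkx} and set $x=q_3^{-1}q_2^{-n}s^{(0)}_i$, so the $m$-th summand becomes $b_m(q_3^{-1}q_2^{-m}s^{(0)}_i)$. For every $m\ge 2$ the explicit formula for $b_m$ (and for $b_n$) contains the numerator factor $y_0(q_3q_2^m x)$, which vanishes here because $q_3q_2^m\cdot q_3^{-1}q_2^{-m}s^{(0)}_i=s^{(0)}_i$ is a root of $y_0$, while genericity prevents the zero from being cancelled by a denominator. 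Hence only the $m=1$ term survives, and using $q_3^{-1}q_2^{-1}=q_1$ together with $q_2 q_1=q_3^{-1}$ I get $a_0/a_1=b_1(q_1s^{(0)}_i)^{-1}=y_1(q_1s^{(0)}_i)/y_1(q_3^{-1}s^{(0)}_i)$, which is precisely the $y_1$-factor of $(BAE)_0$.

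For the second ratio I expand $t_{n-1}=a_{n-1}/a_0$ via \eqref{tkx}; the term indexed by the complement of $\{m\}$ equals $\prod_{p=1}^{m-1}b_p(q_2x)\prod_{j=m+1}^n b_j(x)$. At $x=q_3^{n-1}q_2^{-1}s^{(0)}_i$ the factor $b_n(x)$ carries the vanishing numerator $y_0(q_3^{-n+1}q_2x)=y_0(s^{(0)}_i)$, so every term with $m<n$ drops out and $t_{n-1}$ reduces to $\prod_{p=1}^{n-1}b_p(q_3^{n-1}s^{(0)}_i)$. This product telescopes: with $c=q_3^{n-1}s^{(0)}_i$ the $y_0$-ratios give $y_0(q_3q_2^{n-1}c)/y_0(q_3q_2c)$, the $y_j$-ratios for $1\le j\le n-2$ cancel in consecutive pairs by the explicit form of $b_p$, and only $y_{n-1}(q_3^{-n+2}q_2c)/y_{n-1}(q_3^{-n+2}c)$ survives. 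Taking reciprocals and using $q_1^{-1}=q_2q_3$ turns the $y_{n-1}$-part into $y_{n-1}(q_3s^{(0)}_i)/y_{n-1}(q_1^{-1}s^{(0)}_i)$, the $y_{n-1}$-factor of $(BAE)_0$. The accumulated hypergeometric factors are $\prod_{1\le j<k\le n-1}\tilde{\Psi}_j(q_1^{j-1}q_2^{k-1}c)$, and the identity $q_1^{j-1}q_2^{k-1}q_3^{n-1}=q_3^{n-j}q_2^{k-j}$ (a consequence of $q_1q_3=q_2^{-1}$) rewrites them as $\prod_{1\le j<k\le n-1}\tilde{\Psi}_j(q_3^{n-j}q_2^{k-j}s^{(0)}_i)$, matching the product on the right of \eqref{BAE0-subst}.

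It then remains to reconcile the leftover $y_0$-factors. The telescoping of the second step also leaves $y_0(q_3^nq_2s^{(0)}_i)/y_0(q_3^nq_2^{n-1}s^{(0)}_i)$; combined with the external factors $y_0(q_2s^{(0)}_i)/y_0(q_2^{-1}s^{(0)}_i)$ and $y_0(q_1^{-n}q_2^{-1}s^{(0)}_i)/y_0(q_3^nq_2s^{(0)}_i)$ of \eqref{BAE0-subst}, the $y_0(q_3^nq_2s^{(0)}_i)$ cancels and the identity $q_1^{-n}q_2^{-1}=q_3^nq_2^{n-1}$ makes the remaining pair trivial, so only $y_0(q_2s^{(0)}_i)/y_0(q_2^{-1}s^{(0)}_i)$ survives. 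The left side of \eqref{BAE0-subst} is thereby equal to $\frac{y_0(q_2s^{(0)}_i)}{y_0(q_2^{-1}s^{(0)}_i)}\frac{y_{n-1}(q_3s^{(0)}_i)}{y_{n-1}(q_1^{-1}s^{(0)}_i)}\frac{y_1(q_1s^{(0)}_i)}{y_1(q_3^{-1}s^{(0)}_i)}$ times $\prod_{1\le j<k\le n-1}\tilde{\Psi}_j(q_3^{n-j}q_2^{k-j}s^{(0)}_i)^{-1}$, and invoking $(BAE)_0$ to replace the first triple by $-\tilde{\Psi}_0(s^{(0)}_i)$ yields the assertion.

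The main obstacle is the bookkeeping in the second step: one must verify that exactly one term of the $t_{n-1}$-expansion survives, carry out the telescoping of the three interlocking families of $y$-ratios, and match the arguments of the surviving $\tilde{\Psi}$-factors through the shift relations $q_1q_3=q_2^{-1}$, $q_3^{-1}=q_1q_2$ and $q_1^{-1}=q_2q_3$. The genericity hypotheses on the $s^{(\nu)}_i$ are exactly what guarantee that the designated vanishing factors are not accidentally cancelled by poles of the adjacent $b$-factors, so they must be invoked at each collapse.
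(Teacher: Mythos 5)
Your proposal is correct and is essentially the paper's own argument: both rest on evaluating $t_1$ at $q_3^{-1}q_2^{-n}s^{(0)}_i$ and $t_{n-1}$ at $q_3^{n-1}q_2^{-1}s^{(0)}_i$, observing that the vanishing $y_0$-factors kill all but one term of the expansion \eqref{tkx}, and telescoping $\prod_{p=1}^{n-1}b_p(q_3^{n-1}s^{(0)}_i)$ before substituting into (BAE)$_0$. The only difference is directional (you simplify the left side of \eqref{BAE0-subst} and then invoke (BAE)$_0$, while the paper eliminates $y_1$ and $y_{n-1}$ from (BAE)$_0$), which is immaterial.
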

\begin{proof}
We must eliminate $y_1(x)$ and $y_{n-1}(x)$ from the left
hand side of (BAE)$_0$.
For that purpose we use
\begin{align*}
&t_1(x)=\sum_{i=1}^{n}b_i(q_2^{n-i}x)\,,\quad
t_{n-1}(x)=\sum_{i=1}^n \prod_{j=i+1}^n b_j(x)\cdot
\prod_{j=1}^{i-1}b_j(q_2x)\,. 
\end{align*} 
From the explicit formula for $b_j(x)$ we see that
$b_j(q_2^{n-j}x)$ with $j\neq 1$
have a common zero at $x=q_3^{-1}q_2^{-n}s^{(0)}_i$. Hence
\begin{align*}
t_1(q_3^{-1}q_2^{-n}s^{(0)}_i) 
=b_1(q_1 s^{(0)}_i) =\frac{y_1(q_3^{-1}s^{(0)}_i)}{y_1(q_1s^{(0)}_i)}\,.
\end{align*}
Similarly, noting that $b_n(q_3^{n-1}q_2^{-1}s^{(0)}_i)=0$
and that for $p\le n-1$
\begin{align*}
\prod_{i=1}^p b_i(x)=\prod_{1\le j<k\le p} 
\tilde{\Psi}_j(q_1^{j-1}q_2^{k-1}x)
\cdot
\frac{y_p(q_3^{-p+1}q_2x)}{y_p(q_3^{-p+1}x)}
\frac{y_0(q_1^{-1}q_2^{p-1}x)}{y_0(q_1^{-1}x)}\,,
\end{align*}
we obtain
\begin{align*}
t_{n-1} (q_3^{n-1}q_2^{-1}s^{(0)}_i)&=
\prod_{k=1}^{n-1}b_k(q_3^{n-1}s^{(0)}_i)
\\
&=\prod_{1\le j<k\le n-1}
\tilde{\Psi}_j(q_1^{-n+j}q_2^{-n+k}s^{(0)}_i)
\cdot
\frac{y_{n-1}(q_1^{-1}s^{(0)}_i)}{y_{n-1}(q_3s^{(0)}_i)}
\frac{y_0(q_1^{-n}q_2^{-1}s^{(0)}_i)}{y_0(q_3^{n}q_2 s^{(0)}_i)}\,.
\end{align*}
Substituting these into (BAE)$_0$, we obtain the stated formula.
\end{proof}

We now consider a $q$-difference oper \eqref{oper-rankn}  
with \eqref{a0-rankn}, \eqref{bottom-rankn}, regarding  
\begin{align}
q_2^{l_1}\,,\ \ldots,\ q_2^{l_{n-1}}\,
\label{qli}
\end{align}
as independent complex parameters.
We impose the condition that the zeroes of $a_0(x)$ coming from $y_0(x)$
are all apparent singularities as stated in 
Proposition \ref{prop:apparent-rankn} 
(more precisely we demand the conditions in 
Proposition \ref{prop:apparent-special}), 
as well as the condition \eqref{BAE0-subst}. 
The issue is whether these conditions determine $L$ to within a finite number.

For that matter let us count the number of equations and that of unknowns. 
We have 
\begin{align*}
\deg a_0(x)=\deg a_{n}(x)=n l_0+\sum_{j=1}^{n-1}(n-j)\deg F_j(x)\,. 
\end{align*}
The polynomials $a_1(x),\ldots,a_{n-1}(x)$ have the same degree.
Their leading coefficients at $x\to\infty$ and constant terms are 
determined by \eqref{qli} via
\begin{align*}
&\sum_{k=0}^n(-1)^k t_k(\infty)\sigma_{q_2}^{n-k}
=\prod_{j=1}^n (\sigma_{q_2}-b_k(\infty))\,,\quad
\sum_{k=0}^n(-1)^k t_k(0)\sigma_{q_2}^{n-k}
=\prod_{j=1}^n (\sigma_{q_2}-b_k(0))\,.
\end{align*}
Along with the roots $s^{(0)}_i$ themselves, there are altogether
\begin{align}
(n-1)\bigl(n l_0+\sum_{j=1}^{n-1}(n-j)\deg F_j(x)-1\bigr) +l_0
\label{number-of-unknowns}
\end{align} 
unknowns. 

For each  $s^{(0)}_i$, 
the condition that \eqref{app1}, \eqref{app2} are apparent
amounts to $(n-1)^2+(n-1)=n(n-1)$ conditions.
Together with the condition in Proposition \ref{prop:BAE0}, we find that 
the total number of equations is
\begin{align}
n(n-1)l_0+l_0\,. \label{number-of-eqs}
\end{align}
Then \eqref{number-of-unknowns} and \eqref{number-of-eqs}
match only when $\deg F_1(x)=\cdots=\deg F_{n-2}(x)=0$
and $\deg F_{n-1}(x)=1$.

The reasoning above suggests that an analog of 
Conjecture \ref{conj:1} for the quantum toroidal $\gl_n$ algebra
holds true if we consider
an evaluation  $\E^\perp_n$-module of highest $\ell$-weight 
\begin{align*}
\Bigl(
\frac{\kappa_0(1-\kappa_0^{-2}u/z)}{1-u/z},1,\ldots,1,
\frac{\kappa_{n-1}(1-q_3^{-1}u/z)}{1-q_3^{-1}\kappa_{n-1}^2u/z}
\Bigr)\,, 
\qquad \kappa_0^2\kappa_{n-1}^2=q_3^n\,,
\end{align*}
and ``analytically continue'' with respect to the parameters \eqref{qli}.
Alternatively one can start from a representation of $U_q\mathfrak{gl}_n$
with $n$ parameters $\kappa_{n-1},q_2^{l_1},\ldots,q_2^{l_{n-1}}$, 
induce it up to $U_q\widehat{\mathfrak{gl}}_n$ and consider 
the evaluation $\E_n^\perp$-module. We do not repeat the details here. 

We remark that, 
with such a choice of $\ell$-weight, 
the top oper (one for which $y_0(x)=1$) takes the form 
\begin{align*}
L^{top}&=
x(\hat{p}_{n}\sigma_{q_2}-\kappa_{n-1}q^{l_1-l_{n-1}})
\prod_{i=1}^{n-1}(\hat{p}_i\sigma_{q_2}-q^{l_1-l_{i-1}+l_i})
\\
&-x_1(\hat{p}_n\sigma_{q_2}-\kappa_{n-1}^{-1}q^{l_1+l_{n-1}})
\prod_{i=1}^{n-1}(\hat{p}_i\sigma_{q_2}-q^{l_1+l_{i-1}-l_i})\,,
\end{align*}
where $\hat{p}_i=\prod_{j=1}^{i-1}p_j$ and
$x_1=q_3^{-1}\kappa_{n-1}^2u$. 
After appropriate rescaling and conjugation,  
the equation $L^{top}y=0$ reduces to the one 
for the generalized $q$-hypergeometric series
\begin{align*}
{}_n\phi_{n-1}\Bigl(
\begin{matrix}
 a_1 & \cdots &a_{n-1}& a_n\\
b_1 & \cdots &b_{n-1}&
\end{matrix};
q_2,x
\Bigr) 
 =\sum_{j=0}^\infty\frac{(a_1,\ldots,a_{n-1},a_n; q_2)_j}
{(b_1,\ldots,b_{n-1},q_2;q_2)_j}x^j\,.
\end{align*}
This is a rigid equation \cite{R} 
(i.e. it does not have accessory parameters), 
and its connection matrix has been determined explicitly, 
see e.g. \cite{Wa}.

\appendix

\section{Differential limit of apparent singularities}\label{app:1}

In this section we examine the limit $\ssq\to 1$ 
of second order equations.
We show that the conditions for apparent singularities
imply the semi-simplicity of local monodromy 
(i.e. absence of a logarithmic term)
for the differential equations obtained as the limit.

\subsection{Second order operators}

We consider a second order $q$-difference operator with polynomial coefficients
\begin{align*}
L=a_0(x,\ssq)\sigma_\ssq^2-a_1(x,\ssq)\sigma_\ssq+a_2(x,\ssq)\,, 
\end{align*}
where the dependence on $\ssq$ is made explicit. 
Let $s\in \C^{\times}$ and $r\in\Z_{\ge0}$. 
As in Proposition \ref{prop:2nd}, we assume that 
$a_0(x,\ssq)$, $a_2(\ssq^{-r}x,\ssq)$ have a simple zero at $x=s$
and that there are no other zeros of the form $\ssq^j s$, $j\in\Z$. 
We set $\ssq=e^{-\epsilon}$, keep $s$ fixed 
and study the limit $\epsilon\to 0$.
We assume also that no other zeroes converge to $s$ in this limit. 

In view of the relation 
\begin{align*}
\frac{\sigma_\ssq-1}{\ssq-1}\longrightarrow x\partial 
\quad \text{as $\ssq\to 1$} 
\end{align*}
where $\partial=d/dx$, we rewrite $L$ in the form 
\begin{align*}
\frac{1}{(\ssq-1)^2}L
=
\tilde{a}_0(x,\ssq)\Bigl(\frac{\sigma_\ssq-1}{\ssq-1} \Bigr)^2
-\tilde{a}_1(x,\ssq)\frac{\sigma_\ssq-1}{\ssq-1} 
+\tilde{a}_2(x,\ssq)\,,
\end{align*}
and assume that $\tilde{a}_i(x,\ssq)$'s have well-defined limits as $\ssq\to 1$.
This amounts to imposing the condition
\begin{align}\label{expansion start}
&a_0(x,\ssq)-a_2(x,\ssq)=O(\epsilon)\,,\quad
a_0(x,\ssq)+a_2(x,\ssq)-a_1(x,\ssq)=O(\epsilon^2) \,.
\end{align}

The limiting differential operator reads
\begin{align}
\mathcal{L}=\lim_{\epsilon\to0}\epsilon^{-2}L
=\alpha(x)(x\partial)^2-\beta(x) x\partial+\gamma(x)\,,
\label{diff-op}
\end{align}
where the coefficients are polynomials given by
\begin{align*}
&\alpha(x)=\lim_{\epsilon\to 0}a_0(x,\ssq)\,, \\
&\beta(x)=\lim_{\epsilon\to0}\epsilon^{-1}\bigl(a_0(x,\ssq)-a_2(x,\ssq)\bigr)\,,\\
&\gamma(x)=\lim_{\epsilon\to 0}\epsilon^{-2}
\bigl(a_0(x,\ssq)+a_2(x,\ssq)-a_1(x,\ssq)\bigr)\,.
\end{align*}

The assumption about the zeroes of $a_0(x,\ssq)$ and $a_2(\ssq^{-r}x,\ssq)$ 
imply that
\begin{align}
\alpha(s)=0\,,\quad  \beta(s)=r s (\partial\alpha)(s)\,,
\quad \partial \alpha(s)\neq0\,.
\label{reg-sing-cond}
\end{align}
Consequently 
the equation $\mathcal{L}y=0$ has a regular singularity at $x=s$ with
characteristic exponents $0,r+1$. 
Since the latter differ by a positive integer, 
the local monodromy has a Jordan block in general, 
and it is semi-simple if and only if there exists a local
holomorphic solution $y(x)=\sum_{j\ge0}c_j(x-s)^j$
satisfying $c_0\neq0$. 
Denote by 
 $\alpha_j,\beta_j,\gamma_j$ the Taylor coefficients at $x=s$
of $x^2\alpha(x)$, $x(\beta(x)-\alpha(x))$, $\gamma(x)$, respectively.
Then the coefficients of 
$\mathcal{L}y(x)=\sum_{j\ge0}\tilde{c}_j(x-s)^j$ are given by
\begin{align}
&\tilde{c}_i=\sum_{j=0}^{i+1} N_{i,j}c_j\,,\quad 0\le i\le r\,,
\label{Nyy}\\
&N_{i,j}=j(j-1)\alpha_{i+2-j}-j\beta_{i+1-j}+\gamma_{i-j}\,.
\nn
\end{align}
In particular, using \eqref{reg-sing-cond}, 
we have $N_{i,i+1}=(i+1)(i-r)\alpha_1$, so 
$c_{r+1}$ does not appear in 
the right hand side of $\tilde{c}_r$.

Therefore the local monodromy is semi-simple if and only if 
\begin{align*}
\det N=0\,\quad 
\text{ where $N=\bigl(N_{i,j}\bigr)_{0\le i,j\le r}$}\,.
\end{align*}

\begin{prop}\label{prop:limdelta}
For the determinant $\Delta_r(x)$ in \eqref{tridiag}, we have 
\begin{align*}
\lim_{\epsilon\to 0}\epsilon^{-2r-2}\Delta_r(s)=\det(-N)\,.
\end{align*}
Hence the condition $\Delta_r(s)=0$ for the apparent singularity 
implies the no-log condition $\det N=0$ in the limit.
\end{prop}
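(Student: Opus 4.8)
The plan is to read $\Delta_r(s)$ as the determinant of $L$ discretized on a geometric grid around $s$, and then to change to the basis of Taylor coefficients at $s$, in which $L$ turns into $\mathcal{L}$ and the determinant turns into $\det N$. First I would normalize signs: writing $M$ for the matrix in \eqref{tridiag} at $x=s$ and conjugating by $\diag\bigl((-1)^k\bigr)$ together with an overall sign, one gets $\det M=(-1)^{r+1}\det M'$, where $M'$ is the tridiagonal matrix with diagonal $-a_1(\ssq^{-i}s)$ and off-diagonals $a_0,a_2$. The virtue of $M'$ is that it is exactly the matrix of the map sending the values $\bigl(y(\ssq^{1-k}s)\bigr)_{k=0}^{r}$ of a test function to the values $\bigl((Ly)(\ssq^{-i}s)\bigr)_{i=0}^{r}$; this is immediate from $(Ly)(x)=a_0(x)y(\ssq^2x)-a_1(x)y(\ssq x)+a_2(x)y(x)$, and the two corner entries that would spoil tridiagonality are absent precisely because $a_0(s)=0$ and $a_2(\ssq^{-r}s)=0$, which are our hypotheses. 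Consequently $M'U$ is the matrix of sampled values $\bigl((L(x-s)^m)(\ssq^{-i}s)\bigr)_{i,m}$, where $U$ is the Vandermonde matrix $U_{km}=u_k^m$ in the deviations $u_k=\ssq^{1-k}s-s$.

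Next I would introduce the codomain Vandermonde $\widehat U_{im}=\hat u_i^{\,m}$ in the deviations $\hat u_i=\ssq^{-i}s-s$, so that $\widehat U^{-1}$ reads off Taylor coefficients at $s$ up to order $r$ from values on the codomain grid, and write $\det M'=\frac{\det\widehat U}{\det U}\,\det\bigl(\widehat U^{-1}M'U\bigr)$. Setting $\ssq=e^{-\epsilon}$, both Vandermonde determinants equal $\prod_{0\le p<q\le r}s(q-p)\epsilon$ to leading order, so $\det\widehat U/\det U\to1$. For the last factor I would invoke the operator limit $\epsilon^{-2}L\to\mathcal{L}$ of \eqref{diff-op}: applied to the polynomial $(x-s)^m$ it gives, coefficientwise, $L(x-s)^m=\epsilon^2\sum_{i'\ge0}N_{i'm}(x-s)^{i'}+o(\epsilon^2)$, with $N_{i'm}$ the coefficient of $(x-s)^{i'}$ in $\mathcal{L}(x-s)^m$, that is, exactly the matrix \eqref{Nyy}. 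Sampling and applying $\widehat U^{-1}$ sends $(x-s)^{i'}$ to $e_{i'}$ for $i'\le r$, while the terms with $i'>r$ are aliasing contributions which, after rescaling $x-s=\epsilon\eta$ so that the grid stays bounded, carry a strictly positive power of $\epsilon$ and drop out. Hence $\widehat U^{-1}M'U=\epsilon^2\bigl(N+o(1)\bigr)$ with $N=(N_{ij})_{0\le i,j\le r}$, and combining the factors yields $\det M'=\epsilon^{2r+2}\bigl(\det N+o(1)\bigr)$, so that $\lim_{\epsilon\to0}\epsilon^{-2r-2}\Delta_r(s)=(-1)^{r+1}\det N=\det(-N)$.

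The step I expect to be the crux is getting $\widehat U^{-1}M'U$ to the correct order. Each entry of $M'$ is itself $O(\epsilon)$, so a naive count predicts $\det M'=O(\epsilon^{r+1})$; but the leading tridiagonal matrix is singular, its kernel being the sampled constant function — this is the reflection of the characteristic exponent $0$ and its resonance with the exponent $r+1$ — so the naive leading term vanishes and the true order is $\epsilon^{2r+2}$. The change of basis is exactly what resolves this degeneracy into the nonsingular matrix $N$, and the delicate point is precisely the claim that the aliasing terms with $i'>r$ do not pollute the order-$\epsilon^2$ part of $\widehat U^{-1}M'U$, since that is the only way data outside the $(r+1)\times(r+1)$ block of $N$ could have entered.
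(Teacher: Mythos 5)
Your proof is correct, and it reaches the paper's conclusion by a noticeably different decomposition. The paper conjugates the tridiagonal matrix $M$ (with $\Delta_r(s)=\det M$) by the single binomial matrix $P_{i,j}=(-1)^i\binom{i}{j}\epsilon^j$, computes $P^{-1}MP=\epsilon^2\tilde{M}+O(\epsilon^3)$ entrywise through combinatorial identities and the expansion $f(\ssq^{-k}s)=\bigl(e^{k\epsilon x\partial}f\bigr)(s)$, where $\tilde{M}$ is the matrix of $-\mathcal{L}$ in the Euler coordinates $\bigl((x\partial)^jy\bigr)(s)$, and then passes to Taylor coefficients by a triangular matrix $K$ to get $\det\tilde{M}=\det(-N)$; since this is a genuine similarity, the determinant is preserved exactly and no correction factor appears. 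You instead read the (sign-adjusted) matrix $M'$ as the discretization of $L$ from grid values to grid values — with the corner entries killed exactly by the hypotheses $a_0(s)=a_2(\ssq^{-r}s)=0$, which is a clean conceptual point the paper leaves implicit — and sandwich it between two \emph{different} Vandermonde matrices $U,\widehat{U}$, landing on $N$ of \eqref{Nyy} directly. The price is twofold: you must show $\det\widehat{U}/\det U\to1$ (routine, as you note), and you must control the aliasing of monomials $(x-s)^{i'}$ with $i'>r$, which you correctly identify as the crux; your estimate is sound, since the rows of $\widehat{U}^{-1}$ are Lagrange coefficients of size $O(\epsilon^{-j})$ while the sampled monomial is $O(\epsilon^{i'})$, giving $O(\epsilon^{i'-j})=O(\epsilon)$ relative to the $\epsilon^2$ leading term. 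Note that the paper's counterpart of this truncation step is not an order-of-magnitude argument but an exact one: terms with $j>r$ drop from \eqref{Myy} because $N_{r,r+1}=0$ by \eqref{reg-sing-cond}. In short, the paper's route buys exact determinant preservation and uniform entrywise error control at the cost of a combinatorial computation and a detour through the $(x\partial)$-basis; yours buys a transparent interpretation of $\Delta_r(s)$ as the discretized $\mathcal{L}$ (and an explanation of why the naive order $\epsilon^{r+1}$ degenerates, the kernel being the sampled constants), at the cost of the interpolation-error analysis.
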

\begin{proof}
Define matrices $P=\bigl(P_{i,j}\bigr)_{0\le i,j\le r}$ ,
$M=\bigl(M_{i,j}\bigr)_{0\le i,j\le r}$
by
\begin{align*}
&P_{i,j}=(-1)^i \binom{i}{j}\epsilon^j\,,
\quad
M_{i,j}=\delta_{i,j+1}a_0(\ssq^{-i}s,\ssq)+\delta_{i,j}a_1(\ssq^{-i}s,\ssq)+
\delta_{i,j-1}a_2(\ssq^{-i}s,\ssq)\,,
\end{align*} 
where we set $\binom{i}{j}=0$ if $j>i$ or $j<0$.
We have then $\Delta_r(s)=\det M$. 

Setting
$M_{0,-1}=a_0(s,\ssq)=0$, $M_{r,r+1}=a_2(\ssq^{-r}s,\ssq)=0$ and noting that
$(P^{-1})_{i,j}=\epsilon^{-i}(-1)^{i}\binom{i}{j}$,
we compute
\begin{align}
(P^{-1}MP)_{i,j}&=\sum_{k=0}^r (P^{-1})_{i,k}\Bigl(a_0(\ssq^{-k}s,\ssq)P_{k-1,j}
+a_1(\ssq^{-k}s,\ssq)P_{k,j}+a_2(\ssq^{-k}s,\ssq)P_{k+1,j}\Bigr) 
\label{PMP1}
\\
&=\epsilon^{-i+j}\sum_{k=0}^r(-1)^{i-k}\binom{i}{k}
\Bigl\{-\binom{k-1}{j-2}a_0(\ssq^{-k}s)
\nn\\
&+\binom{k}{j}\bigl(a_1(\ssq^{-k}s)-a_0(\ssq^{-k}s)-a_2(\ssq^{-k}s)\bigr)
+\binom{k}{j-1}\bigl(a_0(\ssq^{-k}s)-a_2(\ssq^{-k}s)\bigr)\Bigr\}\,.\nn
\end{align}
For any function $f(x)$ which is holomorphic at $x=0$, we have
$f(\ssq^{-k}x)=\bigl(e^{k\epsilon x\partial}f\bigr)(x)$, so that
\begin{align*}
\sum_{k=0}^r(-1)^{i-k} \binom{i}{k} \binom{k}{j} f(\ssq^{-k}s)
&=\sum_{k=0}^r (-1)^{i-k}\binom{i}{j}\binom{i-j}{i-k}
\sum_{p\ge0}\frac{(k\epsilon)^p}{p!}\bigl((x\partial)^pf\bigr)(s)
\\
&=\binom{i}{j}\sum_{p\ge0}\frac{\epsilon^{p}}{p!}
\bigl((x\partial)^p f\bigr)(s)
\sum_{l=0}^{i-j}(-1)^l(i-l)^p
\binom{i-j}{l}\\
&=\binom{i}{j}\sum_{p\ge0}\frac{\epsilon^{p}}{p!}
\bigl((x\partial)^p f\bigr)(s)\cdot
(i-x\partial)^p(1-x)^{i-j}\Bigl|_{x=1}\,
\\
&=\binom{i}{j}\epsilon^{i-j}\bigl((x\partial)^{i-j} f\bigr)(s)
+O(\epsilon^{i-j+1})\,.
\end{align*}
Applying this identity to the second and third terms of the second line of
\eqref{PMP1}, 
and computing similarly for the first term,
we find that 
\begin{align}
&(P^{-1}MP)_{i,j}=\epsilon^2 \tilde{M}_{i,j}+O(\epsilon^3)\,,
\label{PMP}
\end{align}
where
\begin{align*}
\tilde{M}_{i,j}
&=
-\binom{i}{j-2}\bigl((x\partial)^{i-j+2}\alpha\bigr)(s)
+\binom{i}{j-1}\bigl((x\partial)^{i-j+1}\beta\bigr)(s)
-\binom{i}{j}\bigl((x\partial)^{i-j}\gamma\bigr)(s)
\,.
\end{align*}

For a power series $y(x)=\sum_{j\ge0}c_j(x-s)^j$, let 
$\tilde{y}(x)=\mathcal{L}y(x)=\sum_{j\ge0}\tilde{c}_j(x-s)^j$. 
Due to the Leibniz rule, the following relation holds
for any $0\le i\le r$:
\begin{align}
\bigl((x\partial)^i\tilde{y} \bigr)(s)
=-\sum_{j=0}^r\tilde{M}_{i,j}
\bigl((x\partial)^jy\bigr)(s)\,.
\label{Myy}
\end{align}
In the right hand side, terms $(x\partial)^jy$ with $j>r$ drop 
due to \eqref{reg-sing-cond}.

On the other hand, we have clearly
\begin{align*}
\bigl((x\partial)^i\tilde{y} \bigr)(s)=\sum_{j=0}^r K_{i,j}\tilde{c}_j\,,
\quad
\bigl((x\partial)^i y \bigr)(s)=\sum_{j=0}^r K_{i,j}c_j\,,
\end{align*}
for some triangular matrix $K$. 
Comparing \eqref{Myy} with \eqref{Nyy} and using \eqref{PMP}, 
we conclude that 
\begin{align*}
\det M= \epsilon^{2r+2}\det\tilde{M}+O(\epsilon^{2r+3})\,,
\quad 
\det \tilde{M}=\det \bigl(-N\bigr)\,.
\end{align*}
This completes the proof. 

\end{proof}
\bigskip

\noindent {\it Remark.}\quad
Often it is preferred to bring \eqref{diff-op}
into the normal form 
\begin{align*}
&U(x)\mathcal{L}U(x)^{-1}
=x^2\alpha(x)\bigl(\partial^2-V(x)\bigr)\,,
\end{align*}
by conjugation with appropriate $U(x)$. At $x=s$ 
\begin{align}
V(x)&=\frac{\frac{r}{2}\bigl(\frac{r}{2}+1\bigr)}{(x-s)^2}
+\frac{V_1}{x-s}+V_2+V_3(x-s)+\cdots\,,
\label{expandV}
\end{align}
so the characteristic exponents are $-r/2,r/2+1$. 
The criterion for semi-simplicity becomes the vanishing of the determinant
\begin{align}
N_r=
\left|
\begin{matrix}
V_1 &1\cdot r       &    0              &0 &\cdots&0\\
V_2& V_1     &2\cdot(r-1)            &0 &\cdots&0\\
V_3& V_2     &V_1    &3\cdot(r-2)    &\cdots&0 \\
 &&&&&\\
\vdots   & \ddots  &\ddots&\ddots  &\ddots& 0  \\
\vdots   & \ddots  &\ddots& V_2 &V_1&r\cdot 1   \\
V_{r+1}&\cdots&        & V_3&V_2  & V_1 \\
\end{matrix}
\right| \,.
\label{Nolog}
\end{align}
\qed
\bigskip

\subsection{Another example}

Next we consider the oper \eqref{oper-relax} 
relevant to Conjecture \ref{conj:1}. 
We choose the shift parameter to be $\ssq=q_2$. 
The coefficients $a_0(x,\ssq)$, $a_2(\ssq^{-1}x,\ssq)$ 
have a pair of zeros $x=q_1s,q_3s$, where $s$ stands for one of $s_i$'s.
Since  these zeroes merge in the limit $q_1,q_2,q_3\to 1$, 
Proposition \ref{prop:limdelta} does not apply, and a separate consideration is necessary.

We set 
\begin{align*}
q_2=e^{-\epsilon}, \quad q_1= e^{-t\epsilon}\,,
\quad q_3=e^{(1+t)\epsilon}\,,
\end{align*}
and let $\epsilon \to 0$, keeping $s$ and $t$ fixed.
We use the expansions
\begin{align*}
&a_0(x,\ssq)=(x-q_1s)(x-q_3s)\bigl(\ba_0(x)+\epsilon \ba_1(x)
+\epsilon^2 \ba_2(x)+\epsilon^3\ba_3(x)+\cdots\bigr) \,,
\\
&a_1(x,\ssq)=
2(x-s)^2\ba_0(x)+\epsilon (x-s)^2\bigl(\ba_1(x)+\bc_1(x)\bigr)
-4\epsilon(x-s)s \ba_0(x)
+\epsilon^2 \bb_2(x)+\epsilon^3 \bb_3(x)+\cdots \,,
\\
&a_2(x,\ssq)=(x-q_1q_2^{-1}s)(x-q_3q_2^{-1}s)\bigl(\ba_0(x)+\epsilon \bc_1(x)
+\epsilon^2 \bc_2(x)+\epsilon^3\bc_3(x)+\cdots\bigr) \,.
\end{align*}
Here the expansions are chosen so that \eqref{expansion start} is satisfied.
The condition for $s$ to be an apparent singularity is the vanishing
of two determinants
\begin{align*}
\Delta^{(1)}
=\left|
\begin{matrix}
a_1(q_1 s,\ssq)  & a_2(q_1 s,\ssq) \\
a_0(q_1q_2^{-1} s,\ssq)  & a_1(q_1q_2^{-1} s,\ssq) \\
\end{matrix}
\right|\,,
\quad
\Delta^{(2)}
=\left|
\begin{matrix}
a_1(q_3s,\ssq)  & a_2(q_3s,\ssq) \\
a_0(q_3q_2^{-1} s,\ssq)  & a_1(q_3q_2^{-1} s,\ssq) \\
\end{matrix}
\right|\,.
\end{align*}
We expand them in powers of $\epsilon$ as 
$\Delta^{(i)}=\sum_{n=0}^\infty \epsilon^n\Delta^{(i)}_n$. 
The first nontrivial order is $\epsilon^4$ which gives
\begin{align*}
\Delta^{(1)}_4=\Delta^{(2)}_4=
-\Bigl(\bb_2(s)+2s^2(-1+t+t^2)\ba_0(s)\Bigr) 
\Bigl(\bb_2(s)+2s^2(t+t^2)\ba_0(s)\Bigr)\,.
\end{align*}
There are two cases to consider,
\begin{align*}
\text{(1): $\bb_2(s)=-2s^2(-1+t+t^2)\ba_0(s)$},
\quad
\text{(2): $\bb_2(s)=-2s^2(t+t^2)\ba_0(s)$}.
\end{align*}
We will see that they correspond to the decomposition $\C^2\otimes \C^2=\C\oplus \C^3$.
Substituting these conditions respectively, 
we compute the limiting differential operator in the normal form
$\partial^2-V$. 

In the case (1), we obtain
\begin{align*}
V(x)=\frac{V_1}{x-s}+V_2+O\bigl(x-s\bigr) 
\end{align*}
which corresponds to $r=0$ in \eqref{expandV}.  
At the next order we get the no-log condition
\begin{align*}
\Delta^{(1)}_5-\Delta^{(2)}_5=-4s^5(1+2t) \ba_0(s)^2\times N_0\,
\end{align*}
in the notation of \eqref{Nolog}. 

In the case (2) we have
\begin{align*}
V(x)=\frac{2}{(x-s)^2}+\frac{V_1}{x-s}+V_2+V_3(x-s)+O\bigl((x-s)^2\bigr) 
\end{align*}
which corresponds to $r=2$. 
Proceeding further we find that
\begin{align*}
&\text{$\Delta^{(1)}_5=\Delta^{(2)}_5$ gives $\bb_3(s)$},\\
&\text{$\Delta^{(1)}_6=\Delta^{(2)}_6$ gives $\bb_4(s)$},
\end{align*}
in terms of differential polynomials of
$\ba_i(x)$, $0\le i\le 2$, $\bb_2(x)$ and $\bc_j(x)$, $1\le j\le 2$ , 
evaluated at $x=s$, up to a denominator of the form $\ba_0(s)^m$.

Substituting them back at the next order, we arrive at 
\begin{align*}
\Delta^{(1)}_7-\Delta^{(2)}_7
= -t(1+t)(1+2t)s^7 \ba_0(s)^2\times N_2\,.
\end{align*}
\bigskip

\section{Connection matrix for $q$-hypergeometric opers}
\label{subsec:connection}

In this section we study the connection matrix for $q$-opers obtained by 
adjoining apparent singularities to a $q$-hypergeometric oper.
We will work with the corresponding first order systems.

We fix $\xi,\rho_1,\rho_2,\kappa_1,\kappa_2\in\C^{\times}$, and 
a polynomial $p(x)=\prod_{i=1}^N(x-s_i)$ such that $s_i\in\C^\times$ and
$s_i/s_j\not \in \ssq^{\Z}$ for $i\neq j$. 
Consider a $q$-difference equation in the first order form
\begin{align}
 Y(\ssq x)=A(x)Y(x)\,,\quad
A(x)=\begin{pmatrix}
      \frac{a_1(x)}{a_0(x)}& -\frac{a_2(x)}{a_0(x)}\\
           1  & 0 \\
     \end{pmatrix}\,,
\label{qHGAY}
\end{align}
where
\begin{align*}
&a_0(x)=(x-\xi)p(x)\,,\\
&a_1(x)=(\kappa_1+\kappa_2)x^{N+1}+\sum_{k=1}^Na_{1,k}x^k
-\xi(\rho_1+\rho_2)p(0)\,,\\
&a_2(x)=\bigl(\kappa_1\kappa_2\ssq^{-N}x-\xi \rho_1\rho_2\bigr)p(\ssq x)\,.
\end{align*}
We assume the non-resonance condition 
\begin{align}
\frac{\rho_1}{\rho_2}\,,\frac{\kappa_1}{\kappa_2}
\quad \not\in \ssq^\Z\,,
\label{non-resonant}
\end{align}
and that the $s_i$ and $\ssq^{-1} s_i$ are apparent singularities.
Since $a_0(s_i)=a_2(\ssq^{-1}s_i)=0$, we have 
by Proposition \ref{prop:2nd}
\begin{align*}
\left|
 \begin{matrix}
  a_1(s_i) & a_2(s_i)\\
  a_0(\ssq^{-1}s_i)& a_1(\ssq^{-1}s_i)
 \end{matrix}
\right|
=0\,,
\quad 1\le i\le N\,.
\end{align*}

The coefficients $A_0,A_\infty$ are independent of $N$ and $s_i$,
\begin{align}
A_0=\begin{pmatrix}
      \rho_1+\rho_2 &-\rho_1\rho_2\\
       1 & 0 \\
     \end{pmatrix} \,,
\quad
A_\infty=\begin{pmatrix}
      \kappa_1+\kappa_2 &-\kappa_1\kappa_2\\ 1 & 0 \\
     \end{pmatrix} \,.
\label{A0Ainf}
\end{align}
It will be convenient to diagonalize them 
\begin{alignat*}{2}
&A_0=C_0\Lambda_0 C_0^{-1}\,,
\quad
&C_0=
\begin{pmatrix}
\rho_1 & \rho_2\\
1 & 1\\
\end{pmatrix}
\,,
\quad 
&\Lambda_0=
\begin{pmatrix}
\rho_1&0\\
0&\rho_2\\ 
\end{pmatrix} 
\,,
\\
&A_\infty=C_\infty\Lambda_\infty C_\infty^{-1}\,,
\quad
&C_\infty=
\begin{pmatrix}
\kappa_1 & \kappa_2\\
1 & 1\\
\end{pmatrix}
\,,
\quad 
&\Lambda_\infty=
\begin{pmatrix}
\kappa_1&0\\
0&\kappa_2\\ 
\end{pmatrix} 
\,,
\end{alignat*}
and modify the canonical solutions accordingly:
\begin{align*}
&Y_0'(x)=Y_0(x)C_0
=\widehat{Y}'_0(x) 
\begin{pmatrix}
 e_{\rho_1}(x)& 0 \\
0& e_{\rho_2(x)}\\
\end{pmatrix}\,,
\quad \widehat{Y}'_0(0) =C_0\,,
\\
&Y'_\infty(x)=Y_\infty(x)C_\infty
=\widehat{Y}'_\infty(x)
\begin{pmatrix}
 e_{\kappa_1}(x)& 0 \\
0& e_{\kappa_2(x)}\\
\end{pmatrix}\,,
\quad \widehat{Y}'_\infty(\infty)=C_\infty
\,.
\end{align*}
The function $e_c(x)$ is given in \eqref{ec}.

Solving the equation $\det Y(\ssq x)=\det A(x)\det Y(x)$, we find
\begin{align*}
&\det Y'_0(x)=e_{\rho_1}(x)e_{\rho_2}(x)
\frac{(x/\xi;\ssq)_\infty}{(\eta 
\ssq^{-N}x/\xi;\ssq)_\infty}\frac{p(x)}{p(0)}(\rho_1-\rho_2)
\,,
\\
&\det Y'_\infty(x)=e_{\kappa_1}(x)e_{\kappa_2}(x)
\frac{(\ssq^{N+1} \xi/(\eta x);\ssq)_\infty}{(\ssq\xi/x;\ssq)_\infty}
\frac{p(x)}{x^N} (\kappa_1-\kappa_2)
\,,
\end{align*}
where $\eta=\kappa_1\kappa_2/(\rho_1\rho_2)$. 
Set 
\begin{align*}
\hat{M}'(x)={\widehat{Y}_0'(x)}^{-1}\widehat{Y}'_\infty(x).
\end{align*}
We have then
\begin{align*}
&\det \hat{M}'(x)=K\frac{\theta(\eta x/\xi)}{\theta(x/\xi)}\,,
\quad K=\ssq^{-N(N+1)/2}\bigl(-\frac{\eta}{\xi}\bigr)^{N}
p(0)
\frac{\kappa_1-\kappa_2}{\rho_1-\rho_2}
\,,\\
&\text{the poles of $\hat{M}'(x)$ are contained in $\ssq^\Z\xi$},\\
& \hat{M}'(\ssq x)=\Lambda_0 \hat{M}'(x)\Lambda_{\infty}^{-1}\,.
\end{align*}
The last condition implies 
$(\hat{M}'(\ssq x))_{i,j}=(\hat{M}'(x))_{i,j}\rho_i\kappa_j^{-1}$.
Together with the pole condition we conclude that
\begin{align}
(\hat{M}'(x))_{i,j}=m_{i,j}
\frac{\theta_\ssq\bigl(\frac{\kappa_j}{\rho_i}\frac{x}{\xi}\bigr)} 
{\theta_\ssq\bigl(\frac{x}{\xi}\bigr)}\,,
\quad m_{i,j}\in\C\,.
\label{hypergeometricM}
\end{align}
The determinant relation
\begin{align}
m_{1,1}m_{2,2}\theta_{\ssq}\Bigl(\frac{\kappa_1}{\rho_1}\frac{x}{\xi}\Bigr) 
\theta_{\ssq}\Bigl(\frac{\kappa_2}{\rho_2}\frac{x}{\xi}\Bigr) 
-
m_{1,2}m_{2,1}\theta_{\ssq}\Bigl(\frac{\kappa_1}{\rho_2}\frac{x}{\xi}\Bigr) 
\theta_{\ssq}\Bigl(\frac{\kappa_2}{\rho_1}\frac{x}{\xi}\Bigr) 
=
K\theta_{\ssq}\Bigl(\frac{\eta x}{\xi}\Bigr)
\theta_{\ssq}\Bigl(\frac{x}{\xi}\Bigr)
\label{det-rel}
\end{align}
is equivalent to the following:
\begin{align}
&m_{1,1}m_{2,2}=K\frac{\theta_\ssq(\rho_2/\kappa_1)\theta_\ssq(\kappa_2/\rho_1)} 
{\theta_\ssq(\rho_2/\rho_1)\theta_\ssq(\kappa_2/\kappa_1)}\,,
\label{Mrel1}
\\
&m_{1,2}m_{2,1}=-K\frac{\kappa_2}{\rho_1}
\frac{\theta_\ssq(\rho_1/\kappa_1)\theta_\ssq(\rho_2/\kappa_2)} 
{\theta_\ssq(\rho_2/\rho_1)\theta_\ssq(\kappa_2/\kappa_1)}\,.
\label{Mrel2}
\end{align}
Indeed \eqref{Mrel1}, \eqref{Mrel2} 
are obtained by substituting into \eqref{det-rel} 
$x=\rho_2\xi/\kappa_1$ or $x=\rho_1\xi/\kappa_1$, respectively,
and using $\theta_\ssq(1)=0$. 
Conversely \eqref{Mrel1} and \eqref{Mrel2} imply \eqref{det-rel} 
due to the addition theorem for $\theta_\ssq(x)$.  

The above calculation includes 
the special case $N=0$, $p(x)=1$:
\begin{align}
Y(\ssq x)=A^{top}(x)Y(x)\,,\quad 
A^{top}(x)=\frac{x A_\infty-\xi A_0}{x-\xi}\,,
\label{Atop}
\end{align}
which can be reduced to a $q$-hypergeometric equation.
Let $Y^{' top}_0(x),Y^{' top}_\infty(x),\hat{M}^{' top}(x)$ be the 
corresponding modified canonical solutions
and the central part of the connection matrix, respectively. 
From the transformation formula of $q$-hypergeometric functions, see
e.g. (4.3.2) in \cite{GR}, we can find the constants $m^{top}_{i,j}$:
\begin{align*}
&m^{top}_{1,1}=\frac{(\rho_2/\kappa_1,\ssq\kappa_2/\rho_1;\ssq)_\infty} 
{(\rho_2/\rho_1,\ssq\kappa_2/\kappa_1;\ssq)_\infty} \frac{\kappa_1}{\rho_1}\,,
\quad
m^{top}_{1,2}=m^{top}_{1,1}\bigl|_{\kappa_1\leftrightarrow \kappa_2}\,,
\quad
m^{top}_{2,j}=m^{top}_{1,j}\bigl|_{\rho_1\leftrightarrow\rho_2}\,.
 \end{align*}

Assume further 
that\footnote{This is the condition for the system \eqref{Atop}
to be irreducible, see e.g. \cite{R}}
\begin{align}
\frac{\rho_i}{\kappa_j}\not\in \ssq^\Z\,,\quad i,j=1,2\,.
\label{irred}
\end{align}
Then $m_{ij}$ are all non-zero, and 
we can choose constant diagonal matrices $D_0,D_\infty$ such that
\begin{align*}
D_0\hat{M}'(x)D_\infty^{-1}=\hat{M}^{'top}(x)\,.  
\end{align*}
This can be rewritten as
\begin{align*}
\widehat{Y}'_0(x)\bigl(\widehat{Y}^{'top}_0(x)D_0\bigr) ^{-1}
=\widehat{Y}'_\infty(x)\bigl(\widehat{Y}^{'top}_\infty(x)D_\infty\bigr) ^{-1}\,.
\end{align*}
The left hand side is meromorphic on $\CP^1\backslash\{\infty\}$
and the right hand side is meromorphic on $\CP^1\backslash\{0\}$,
hence both sides are rational functions. Denote it by $R(x)$. 
Summarizing the argument we come to the conclusion

\begin{prop}
Notation being as above, 
assume the conditions \eqref{non-resonant} and \eqref{irred}. Then
there exist
 a rational matrix $R(x)\in GL_2(\C(x))$ which is regular at $x=0,\infty$, 
and $X_0,X_\infty\in GL_2(\C)$ satisfying
$[X_0,A_0]=0$, $[X_\infty,A_\infty]=0$,
such that the canonical solutions and the coefficient matrices are related 
as follows.
\begin{align*}
&Y_0(x)=R(x)Y_0^{top}(x)X_0\,,
\quad
 Y_\infty(x)=R(x)Y_\infty^{top}(x)X_\infty\,,
\\
&A(x)=R(\ssq x)A^{top}(x)R(x)^{-1}\,.
\end{align*}
\end{prop}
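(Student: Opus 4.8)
The plan is to upgrade the elliptic matching of connection matrices established above into a rational gauge transformation. The non-resonance condition \eqref{non-resonant} together with the irreducibility condition \eqref{irred} guarantee that every entry $m_{i,j}$ in \eqref{hypergeometricM} is nonzero and shares its quasi-periods and poles with the corresponding $m^{top}_{i,j}$, so that diagonal matrices $D_0,D_\infty$ with $D_0\hat{M}'(x)D_\infty^{-1}=\hat{M}^{'top}(x)$ exist. This is the one place where \eqref{irred} is indispensable, and I regard it as the substantive input to the whole statement. First I would rewrite this identity as
\[
\widehat{Y}'_0(x)\bigl(\widehat{Y}^{'top}_0(x)D_0\bigr)^{-1}
=\widehat{Y}'_\infty(x)\bigl(\widehat{Y}^{'top}_\infty(x)D_\infty\bigr)^{-1}=:R(x),
\]
and observe that the left member is meromorphic on $\CP^1\backslash\{\infty\}$ while the right member is meromorphic on $\CP^1\backslash\{0\}$, so $R(x)$ is rational. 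Evaluating at the two normalization points (where $\widehat{Y}_0(0)=\widehat{Y}^{top}_0(0)=I$ and the analogous statements at $\infty$) gives $R(0)=C_0D_0^{-1}C_0^{-1}$ and $R(\infty)=C_\infty D_\infty^{-1}C_\infty^{-1}$, both invertible, so $R(x)$ is regular at $x=0,\infty$.

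The key structural observation is that the full system $A(x)$ and the hypergeometric system $A^{top}(x)$ carry the same local data at the regular singularities: by \eqref{Atop} one has $A^{top}(0)=A_0$ and $A^{top}(\infty)=A_\infty$, hence the same diagonalizers $C_0,C_\infty$ and the same exponential factors $e_{A_0}(x),e_{A_\infty}(x)$. Using $\widehat{Y}'_0=\widehat{Y}_0C_0$ and $\widehat{Y}^{'top}_0=\widehat{Y}^{top}_0C_0$, I would rewrite $R(x)=\widehat{Y}_0(x)X_0^{-1}\widehat{Y}^{top}_0(x)^{-1}$ with $X_0:=C_0D_0C_0^{-1}$, and similarly $R(x)=\widehat{Y}_\infty(x)X_\infty^{-1}\widehat{Y}^{top}_\infty(x)^{-1}$ with $X_\infty:=C_\infty D_\infty C_\infty^{-1}$. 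Since $X_0$ and $A_0$ are both $C_0$-conjugates of diagonal matrices they commute, and likewise $[X_\infty,A_\infty]=0$. Moreover $X_0$ commutes with $e_{A_0}(x)=C_0\,\diag(e_{\rho_1},e_{\rho_2})\,C_0^{-1}$, so from $\widehat{Y}_0(x)=R(x)\widehat{Y}^{top}_0(x)X_0$ I would multiply on the right by $e_{A_0}(x)$ and move $X_0$ past it to obtain $Y_0(x)=R(x)Y_0^{top}(x)X_0$; the identical computation at infinity yields $Y_\infty(x)=R(x)Y_\infty^{top}(x)X_\infty$.

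Finally the intertwining relation for $A(x)$ follows by applying the shift $\sigma_\ssq$ to one of these identities. Substituting $x\mapsto \ssq x$ in $Y_0(x)=R(x)Y_0^{top}(x)X_0$ and invoking the two $q$-difference equations $Y_0(\ssq x)=A(x)Y_0(x)$ and $Y_0^{top}(\ssq x)=A^{top}(x)Y_0^{top}(x)$ gives
\[
A(x)Y_0(x)=R(\ssq x)A^{top}(x)Y_0^{top}(x)X_0=R(\ssq x)A^{top}(x)R(x)^{-1}Y_0(x),
\]
and since the fundamental solution $Y_0(x)$ is invertible we may cancel it to conclude $A(x)=R(\ssq x)A^{top}(x)R(x)^{-1}$. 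The only remaining point is the purely formal consistency that $R(x)$ computed from the data at $0$ agrees with $R(x)$ computed from the data at $\infty$, which is precisely the two-sided meromorphy argument of the first paragraph, so no further obstacle arises.
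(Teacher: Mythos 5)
Your overall route is exactly the paper's: match the central parts of the connection matrices by constant diagonal matrices $D_0,D_\infty$, define $R(x)$ by the two-sided meromorphy argument, and unwind definitions to obtain the three identities. Your endgame is in fact more explicit than the paper, which stops at ``hence both sides are rational functions'' and leaves the rest to the reader: your verification that $R(0)=C_0D_0^{-1}C_0^{-1}$ and $R(\infty)=C_\infty D_\infty^{-1}C_\infty^{-1}$ are invertible, your definitions $X_0=C_0D_0C_0^{-1}$, $X_\infty=C_\infty D_\infty C_\infty^{-1}$ (which commute with $A_0,A_\infty$ and with $e_{A_0}(x),e_{A_\infty}(x)$ as simultaneous conjugates of diagonal matrices), and the cancellation of the fundamental solution to get $A(x)=R(\ssq x)A^{top}(x)R(x)^{-1}$ are all correct, and correctly use $A^{top}(0)=A_0$, $A^{top}(\infty)=A_\infty$ from \eqref{A0Ainf}.

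There is, however, a gap at the step you yourself single out as the substantive input: the existence of $D_0,D_\infty$ with $D_0\hat{M}'(x)D_\infty^{-1}=\hat{M}^{'top}(x)$. Nonvanishing of the $m_{i,j}$ together with the fact that corresponding entries of $\hat{M}'$ and $\hat{M}^{'top}$ are the \emph{same} theta quotient does not imply diagonal equivalence. Setting $c_{i,j}=m^{top}_{i,j}/m_{i,j}$, you need $c_{i,j}=d_i e_j^{-1}$, and for a $2\times2$ array of nonzero constants this is solvable if and only if $c_{1,1}c_{2,2}=c_{1,2}c_{2,1}$, i.e.\ if and only if the cross-ratios agree:
\begin{align*}
\frac{m_{1,1}m_{2,2}}{m_{1,2}m_{2,1}}
=\frac{m^{top}_{1,1}m^{top}_{2,2}}{m^{top}_{1,2}m^{top}_{2,1}}\,.
\end{align*}
(For instance, doubling a single entry of $\hat{M}'$ preserves both nonvanishing and the theta structure \eqref{hypergeometricM} but destroys diagonal equivalence.) This consistency is precisely what the determinant relations \eqref{Mrel1}, \eqref{Mrel2} supply: dividing them gives
\begin{align*}
\frac{m_{1,1}m_{2,2}}{m_{1,2}m_{2,1}}
=-\frac{\rho_1}{\kappa_2}\,
\frac{\theta_\ssq(\rho_2/\kappa_1)\,\theta_\ssq(\kappa_2/\rho_1)}
{\theta_\ssq(\rho_1/\kappa_1)\,\theta_\ssq(\rho_2/\kappa_2)}\,,
\end{align*}
in which the constant $K$ — the only quantity distinguishing the full system from the top system ($N=0$, $p=1$) — has cancelled; since both systems share the same $A_0,A_\infty$, hence the same $\rho_i,\kappa_j$, the two cross-ratios coincide and the linear problem for $d_i,e_j$ is solvable. (The same relations, with \eqref{irred} guaranteeing the theta values are nonzero, are also what proves all four $m_{i,j}$ are nonzero.) With this one-line repair, drawn from material the paper establishes just before the proposition, your proof is complete and coincides with the paper's.
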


This is analogous to the result in \cite{ET} for hypergeometric 
differential equations. 
\bigskip

\section{``Unfolded'' Bethe equations}\label{sec:unfolded}

Let $\g$ be a complex simple Lie algebra with Cartan matrix 
$(C_{i,j})_{i,j\in I}$.
As is well known, 
with each finite dimensional representation $W$ of an untwisted 
quantum affine algebra $U_q\widehat{\g}$,
one can associate an XXZ type model. 
When $W$ is a tensor product of irreducible finite dimensional 
representations with highest $\ell$-weight
$\Psi_{i,k}(x)$, $i\in I$, $1\le k\le N$, 
the Bethe ansatz equations have the form 
\begin{align*}
p_i \prod_{j\in I}\prod_{s=1}^{m_j}
\frac{q^{B_{ij}}w^{(i)}_r-w^{(j)}_s } {w^{(i)}_r-q^{B_{ij}}w^{(j)}_s } 
=-\prod_{k=1}^{N}\Psi_{i,k}(w^{(i)}_r)\,,
\end{align*}
where $\{w^{(i_)}_r\}_{1\le r\le m_i}$ are the Bethe roots of ``color'' $i\in I$, $p_i$'s are twist parameters entering the transfer matrix, 
and 
$B_{ij}=d_iC_{ij}$ are the entries of the symmetrized Cartan matrix.

There is 
a similar but a different type of Bethe equations, see \cite{MV2}, in which the factor 
\begin{align*}
\frac{q^{B_{ij}} w^{(i)}_r-w^{(j)}_s } {w^{(i)}_r-q^{B_{ij}}w^{(j)}_s }  
\,,\quad \text{$i\neq j$}
\end{align*}
is replaced by
\begin{align*}
\frac{(q^{-1}w^{(i)}_r-w^{(j)}_s )^{-C_{ji}}} 
{(w^{(i)}_r-q^{-1}w^{(j)}_s )^{-C_{ji}}}\,,\quad \text{$i\neq j$}.
\end{align*}
The authors of \cite{FHR} called the latter Bethe equations ``folded type'' 
and proposed the corresponding quantum integrable models. 

Let us return to the Bethe equations \eqref{BE0}, \eqref{BE1}. 
In the special case $d=1$, i.e. $q_1=q_3=q^{-1}$,  these equations become
\begin{align*}
&p_0 
\prod_{k=1}^{l_0}\frac{q^2s_i-s_k} {s_i-q^2s_k} 
\prod_{j=1}^{l_1}\frac{(q^{-1}s_i-t_j)^2}{(s_i-q^{-1}t_j)^2}=-\Psi_0(s_i)\,,
\\
&p_1 
\prod_{l=1}^{l_1}\frac{q^2t_j-t_l} {t_j-q^2t_l} 
\prod_{i=1}^{l_0}\frac{(q^{-1}t_j-s_i)^2}{(t_j-q^{-1}s_i)^2}=-\Psi_1(t_j)\,.
\end{align*}
We recognize that 
they are ``folded type'' Bethe equations associated with the 
affine Cartan matrix
$C=
\begin{pmatrix}
  2& -2\\
-2 & 2\\
 \end{pmatrix}$.
This is to be compared with the Bethe equations of ``unfolded type'' 
obtained in another special case $d=q$ or $d=q^{-1}$,
\begin{align}
&p_0
\prod_{k=1}^{l_0}\frac{q^2s_i-s_k} {s_i-q^2s_k} 
\prod_{j=1}^{l_1}\frac{q^{-2}s_i-t_j}{s_i-q^{-2}t_j}
=-\Psi_0(s_i)\,,
\label{unfold0}\\
&p_1
\prod_{l=1}^{l_1}\frac{q^2t_j-t_l} {t_j-q^2t_l} 
\prod_{i=1}^{l_0}\frac{q^{-2}t_j-s_i}{t_j-q^{-2}s_i}
=-\Psi_1(t_i) \,.
\label{unfold1}
\end{align}

Here we are content to make one remark concerning 
a reformulation for the latter \eqref{unfold0}, \eqref{unfold1}
in terms of so-called discrete $\lambda$-opers.

Let $\ssq=q^2$. Define $P(x)$ by 
\begin{align*}
&p_0p_1\frac{P(\ssq x)}{P(x)}=\Psi_0(x)\Psi_1(x)\,. 
\end{align*}
In general $P(x)$ is not rational, even though $\Psi_i(x)$ are. 

Write $\Psi_1(x)=p_1\ssq^{l_0-l_1}{\bar c}(x)/{\bar a}(x)$ with 
polynomials $\bar{a}(x),\bar{c}(x)$.
Given polynomials
$y_0(x)=\prod_{i=1}^{l_0}(x-s_i)$, $y_1(x)=\prod_{i=1}^{l_1}(x-t_i)$ 
whose roots satisfy \eqref{unfold0}--\eqref{unfold1},
consider the following $q$-difference oper with the parameter $\lambda$
\begin{align*}
&L_\lambda=a(x)\sigma_\ssq^2 -b(x)\sigma_\ssq+c_\lambda(x)\,,
\qquad
c_\lambda(x)=c(x)(1-\lambda P(x))\,,
\end{align*}
where
\begin{align*}
&a(x)=\bar{a}(x)y_0(\ssq^{-1}x)\,,\\ 
&b(x)=a(x)\frac{y_1(\ssq x)}{y_1(x)}+c(x)\frac{y_1(\ssq^{-1}x)}{y_1(x)}\,,\\
&c(x)=\bar{c}(x)y_0(\ssq x)\,.
\end{align*}
Under the Bethe equations \eqref{unfold1},
$b(x)$ is a polynomial. 

\begin{prop}
The following are equivalent.

(i) The oper  $L_\lambda$ 
has an apparent singularity at $x=\ssq s_i$ for all $\lambda$.

(ii) The oper $L_0$ has an apparent singularity at $x=\ssq s_i$, 
and the zeroth Bethe equations \eqref{unfold0} hold.
\end{prop}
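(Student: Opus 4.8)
The plan is to reduce both conditions to the single determinantal criterion of Proposition~\ref{prop:2nd} and then to isolate the dependence on $\lambda$. First I would locate the singular points in the orbit $\ssq^{\Z}s_i$. Since $a(x)=\bar a(x)y_0(\ssq^{-1}x)$ vanishes (simply) at $x=\ssq s_i$ and $c_\lambda(x)=\bar c(x)y_0(\ssq x)(1-\lambda P(x))$ vanishes at $x=\ssq^{-1}s_i$, the two singularities of $L_\lambda$ in this orbit are $\ssq s_i$ and $\ssq^{-1}s_i$; they are related by $x\mapsto\ssq^{-2}x$, so Proposition~\ref{prop:2nd} applies with $s=\ssq s_i$ and $r=2$. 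Genericity guarantees the simplicity and orbit-disjointness hypotheses, the factor $1-\lambda P$ contributing no further zeros in the orbit for generic $\lambda$ (we read ``for all $\lambda$'' as the corresponding polynomial identity in $\lambda$). Hence ``$L_\lambda$ has an apparent singularity at $\ssq s_i$'' is equivalent to $\Delta_2^{(\lambda)}(\ssq s_i)=0$, where $\Delta_2^{(\lambda)}$ is the $3\times3$ tridiagonal determinant of Proposition~\ref{prop:2nd} built from $a_0=a$, $a_1=b$, $a_2=c_\lambda$.

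The second step exploits that $c_\lambda=c(1-\lambda P)$ enters $\Delta_2$ only through the entries $a_2(x)$ and $a_2(\ssq^{-1}x)$, which never get multiplied together in the expansion of a tridiagonal determinant. Therefore $\Delta_2^{(\lambda)}$ is affine-linear in $\lambda$, $\Delta_2^{(\lambda)}(x)=\Delta_2^{(0)}(x)+\lambda R(x)$, with
\[
R(x)=b(x)c(\ssq^{-1}x)P(\ssq^{-1}x)a(\ssq^{-2}x)+c(x)P(x)a(\ssq^{-1}x)b(\ssq^{-2}x).
\]
Consequently (i), i.e. $\Delta_2^{(\lambda)}(\ssq s_i)=0$ for all $\lambda$, holds precisely when both $\Delta_2^{(0)}(\ssq s_i)=0$ and $R(\ssq s_i)=0$. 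The first of these is, by Proposition~\ref{prop:2nd} applied to $L_0$, exactly the statement that $L_0$ has an apparent singularity at $\ssq s_i$; it is shared between (i) and (ii). Thus the whole equivalence collapses to the assertion that, as a stand-alone identity, $R(\ssq s_i)=0$ is equivalent to the zeroth Bethe equation~\eqref{unfold0}.

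The last and most delicate step is this direct evaluation. Using $a(\ssq s_i)=0$ and $c(\ssq^{-1}s_i)=0$, the definition of $b$ collapses to $b(\ssq s_i)=c(\ssq s_i)y_1(s_i)/y_1(\ssq s_i)$ and $b(\ssq^{-1}s_i)=a(\ssq^{-1}s_i)y_1(s_i)/y_1(\ssq^{-1}s_i)$, so that
\[
R(\ssq s_i)=c(\ssq s_i)a(\ssq^{-1}s_i)y_1(s_i)\Bigl(\tfrac{c(s_i)P(s_i)}{y_1(\ssq s_i)}+\tfrac{a(s_i)P(\ssq s_i)}{y_1(\ssq^{-1}s_i)}\Bigr).
\]
The prefactor is nonzero under the genericity and non-vanishing assumptions, so $R(\ssq s_i)=0$ is equivalent to $\frac{c(s_i)}{a(s_i)}\frac{y_1(\ssq^{-1}s_i)}{y_1(\ssq s_i)}=-\frac{P(\ssq s_i)}{P(s_i)}$. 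Into this I would substitute $a(s_i)=\bar a(s_i)y_0(\ssq^{-1}s_i)$, $c(s_i)=\bar c(s_i)y_0(\ssq s_i)$, the defining relation $P(\ssq s_i)/P(s_i)=\Psi_0(s_i)\Psi_1(s_i)/(p_0p_1)$, and $\bar c/\bar a=\Psi_1/(p_1\ssq^{l_0-l_1})$; the common factor $\Psi_1(s_i)$ cancels, and rewriting $y_0(\ssq s_i)/y_0(\ssq^{-1}s_i)$ and $y_1(\ssq^{-1}s_i)/y_1(\ssq s_i)$ as products over the roots (each $\ssq$-shift producing a compensating power of $\ssq=q^2$, which together cancel the prefactor $\ssq^{l_1-l_0}$) turns the identity into precisely~\eqref{unfold0}. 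I expect the bookkeeping of these $\ssq$-shifts and the matching of the powers $\ssq^{\pm l_0},\ssq^{\pm l_1}$ to be the only genuine obstacle; everything else is formal.
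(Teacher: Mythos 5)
Your proposal is correct and follows essentially the same route as the paper's proof: both reduce (i) to the vanishing of the $3\times3$ tridiagonal determinant of Proposition \ref{prop:2nd} (with $s=\ssq s_i$, $r=2$), observe that it is affine-linear in $\lambda$ so that (i) splits into the $\lambda^0$-part (apparent singularity of $L_0$) and the $\lambda^1$-part, and then show the latter equals zero iff \eqref{unfold0} holds, using the same substitutions $b(\ssq s_i)=c(\ssq s_i)y_1(s_i)/y_1(\ssq s_i)$, $b(\ssq^{-1}s_i)=a(\ssq^{-1}s_i)y_1(s_i)/y_1(\ssq^{-1}s_i)$ and the definitions of $a$, $c$, $P$, $\bar a$, $\bar c$. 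Your version even supplies a couple of details the paper leaves implicit (the justification of $r=2$ and the genericity of the factor $1-\lambda P$), so no gaps remain.
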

\begin{proof}
According to Proposition \ref{prop:2nd}, the condition for $\ssq s_i$ to be apparent is given by the vanishing of a single determinant
\begin{align*}
&\left|
\begin{matrix}
b(\ssq s_i) &  c_\lambda(\ssq s_i) & 0 \\
a(s_i )  &  b(s_i)  &  c_\lambda(s_i) \\
0 & a(\ssq^{-1}x) & b(\ssq^{-1}s_i)\\
\end{matrix}
\right| 
\\
&=
\left|
\begin{matrix}
b(\ssq s_i) &  c(\ssq s_i) & 0 \\
a(s_i )  &  b(s_i)  &  c(s_i) \\
0 & a(\ssq^{-1}x) & b(\ssq^{-1}s_i)\\
\end{matrix}
\right| 
-\lambda \Bigl(
P(s_i)c(s_i)b(\ssq s_i)a(\ssq^{-1} s_i)
+P(\ssq s_i)c(\ssq s_i)b(\ssq^{-1} s_i)a(s_i)
\Bigr)\,.
\end{align*}
The coefficient of $\lambda$ vanishes if and only if
\begin{align}
-1&=\frac{P(s_i)}{P(\ssq s_i)}\frac{a(\ssq^{-1}s_i)}{a(s_i)}
\frac{b(\ssq s_i)}{b(\ssq^{-1} s_i)}\frac{c(s_i)}{c(\ssq s_i)}\,.
\label{lambda-vanish}
\end{align}
Noting that 
\begin{align*}
&b(\ssq s_i)=c(\ssq s_i)\frac{y_1(s_i)}{y_1(\ssq s_i)}\,, \quad
b(\ssq^{-1} s_i)=a(\ssq^{-1} s_i)\frac{y_1(s_i)}{y_1(\ssq^{-1} s_i)}\,, 
\end{align*}
and using the Bethe equations \eqref{unfold1},
we find that \eqref{lambda-vanish} becomes
\begin{align*}
-1=&p_0\ssq^{-l_0+l_1}\frac{1}{\Psi_0(s_i)}
\frac{y_0(\ssq s_i)} {y_0(\ssq^{-1} s_i)} 
\frac{y_1(\ssq^{-1} s_i)}{y_1(\ssq s_i)}\,,
\end{align*}
which is nothing but \eqref{unfold0}.
\end{proof}

So far we have not been able to rewrite the conditions 
\eqref{apparent-relax} and \eqref{BE0-relax} into the ``$\lambda$-form''
for the $q$-oper \eqref{oper-relax} with apparent singularities
for general $q,d$. 

\vskip 1cm

{\bf Acknowledgments.\ }
The study of BF has been funded within the framework of the HSE University Basic Research Program. 
MJ is partially supported by JSPS KAKENHI Grant Numbers 19K03509, 20K03568.
EM is partially supported by Simons Foundation grant number \#709444.

BF thanks Jerusalem University for hospitality. 
BF and EM thank Rikkyo University, where a part of this work was done, 
for hospitality.
\vskip 1cm

\end{document}